\documentclass[reqno]{amsart}

\usepackage[T1]{fontenc}
\usepackage{graphicx}
\usepackage{mathptmx}      
%

\usepackage{pinlabel}
\usepackage{amsmath}
\usepackage{amsfonts}
\usepackage{amssymb}
\usepackage{enumerate}
\usepackage{expdlist}

\usepackage{subfig}
\usepackage{bbold}
\usepackage[usenames]{color}
\usepackage{dsfont}
\usepackage{mathrsfs}
\usepackage{url}
\usepackage{tikz}
\usetikzlibrary{decorations.pathreplacing}
\usetikzlibrary{calc}
\usetikzlibrary{shapes.geometric}
\usepackage{empheq}
\usepackage{tkz-euclide}
\usepackage{pgfplots}
\usepgfplotslibrary{ternary}

\newcommand{\R}{\mathbb R}
\newcommand{\C}{\mathbb C}
\newcommand{\N}{\mathbb N}

\newcommand{\Sp}{\mathbb S}
\newcommand{\D}{\mathbb D}

\newcommand{\eps}{\varepsilon}

\renewcommand{\Im}{\mathrm{Im}}
\newcommand{\Li}{\;\mathrm{Li}_2}

\newcommand{\m}{\textsc{Mod}}
\newcommand{\vel}{\textsc{Vel}}
\newcommand{\ci}{\mathbb{c}}
\newcommand{\B}{\mathbb{B}}


\theoremstyle{plain}
\newtheorem{theorem}{Theorem}[section]
\newtheorem{proposition}[theorem]{Proposition}
\newtheorem{corollary}[theorem]{Corollary}
\newtheorem{lemma}[theorem]{Lemma}
%
\theoremstyle{definition}
\newtheorem{remark}[theorem]{Remark}

%
\theoremstyle{definition}
\newtheorem{definition}[theorem]{Definition}


\title{On rigidity and convergence of circle patterns}

\author{Ulrike B\"ucking
}

\address{Ulrike B\"ucking, 
              Technische Universit\"at Berlin, 
Institut f\"ur Mathematik, 
Stra\ss{}e des 17.\ Juni 136, 10623 Berlin, Germany, 
}
\email{buecking@math.tu-berlin.de}


\begin{document}


\begin{abstract}
\small
Two planar embedded circle patterns with the same combinatorics and 
the same intersection angles can be considered to define a discrete conformal 
map. We show that two locally finite circle patterns covering the 
unit disc are related by a hyperbolic isometry. Furthermore, we prove an 
analogous rigidity statement for the complex plane if all exterior 
intersection angles of neighboring circles are 
uniformly bounded away from $0$.

Finally, we study a sequence of two circle 
patterns with the same combinatorics each of which approximates a given simply  
connected domain. Assume 
that all kites are convex and all angles in the kites are uniformly bounded and 
the radii of one circle pattern converge to $0$. Then a subsequence of the 
corresponding discrete conformal maps converges to a Riemann map between the 
given domains.
%
\end{abstract}

\maketitle

\section{Introduction}

Holomorphic mappings of a domain in the complex plane $\C$ with non-vanishing 
derivative -- also called {\em conformal maps} -- build an important class of 
functions in complex analysis which has 
been investigated for many decades. The more recent idea of studying `discrete 
analogues' led to various approaches to define {\em discrete conformal maps}. 
In this article, we are concerned with pairs of circle patterns which can be 
considered as discrete conformal maps. We show that under suitably defined 
conditions these maps converge to smooth conformal maps. Furthermore, we prove 
rigidity of infinite patterns filling the complex plane or the unit disc.

In particular, assume we are given a cell decomposition 
$\mathscr K$ of (a part of) $\C$ into kites. More precisely, $\mathscr K$ is
a strongly regular cell decomposition whose 2-cells (faces) are 
embedded quadrilaterals (in fact kites), that is, there are exactly four edges 
incident to each face, with counterclockwise orientation.
Furthermore, we assume that the 1-skeleton of $\mathscr K$ is a bipartite 
graph whose edges are straight line segments and that the vertices are colored 
white and black as in Figure~\ref{figbquad}~(left). 
For every white vertex we suppose that the lengths of the incident egdes agree 
as in Figure~\ref{figbquad}~(right) and add a circle 
with center at this vertex which passes through the incident black vertices of 
$\mathscr K$. So white vertices correspond to centers of 
circles and black vertices correspond to intersection points. An example is 
illustrated in Figure~\ref{figbquad}~(right). This leads to an {\em embedded 
planar circle pattern} $\mathscr C$. Obviously, the combinatorics of this 
circle pattern is described by the combinatorics of $\mathscr K$. Furthermore, 
we can read off the (exterior) {\em intersection angles} $\alpha$ between the 
circles, as illustrated in Figure~\ref{figKite}~(left). We associate this 
intersection angle to the kites of $\mathscr K$.

\begin{figure}[tb]
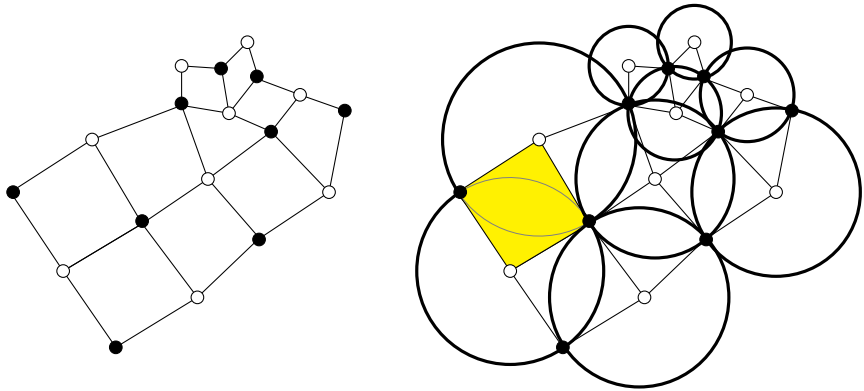


\ifx\XFigwidth\undefined\dimen1=0pt\else\dimen1\XFigwidth\fi
\divide\dimen1 by 7643
\ifx\XFigheight\undefined\dimen3=0pt\else\dimen3\XFigheight\fi
\divide\dimen3 by 6587
\ifdim\dimen1=0pt\ifdim\dimen3=0pt\dimen1=4143sp\dimen3\dimen1
  \else\dimen1\dimen3\fi\else\ifdim\dimen3=0pt\dimen3\dimen1\fi\fi

\tikzpicture[x=+\dimen1, y=+\dimen3,scale=0.35]
\clip(847,-7863) rectangle (8490,-1276);
\tikzset{inner sep=+0pt, outer sep=+0pt}
\draw (3375,-7155)--(4770,-6300)--(3825,-5040)--(2475,-5850)--cycle;
\draw (2970,-3600)--(1575,-4500)--(2475,-5850)--(3825,-5040)--cycle;
\draw (2970,-3600)--(4500,-3015)--(4950,-4275);
\draw (4500,-3015)--(4500,-2340)--(5175,-2385);
\draw (5805,-5310)--(7020,-4500)--(6030,-3465)--(4950,-4275);
\draw (3825,-5040)--(4950,-4275)--(5850,-5310);
\draw (4770,-6300)--(5805,-5310);
\draw (6525,-2835)--(7290,-3105)--(7020,-4500);
\draw (6525,-2835)--(6030,-3465);
\draw (5805,-2565)--(6525,-2835);
\draw (5310,-3150)--(6030,-3465);
\draw (5310,-3150)--(4500,-3015);
\draw (5310,-3150)--(5175,-2340);
\draw (5625,-1935)--(5175,-2385);
\draw (5625,-1935)--(5805,-2520);
\draw (5805,-2520)--(5310,-3150);
\filldraw  (4500,-3015) circle [radius=+64];
\draw[fill=white]  (2475,-5850) circle [radius=+108];
\draw[fill=white]  (4770,-6300) circle [radius=+108];
\filldraw  (3375,-7155) circle [radius=+108];
\filldraw  (3825,-4995) circle [radius=+108];
\filldraw  (1620,-4500) circle [radius=+108];
\draw[fill=white]  (2970,-3600) circle [radius=+108];
\draw[fill=white]  (4950,-4275) circle [radius=+108];
\filldraw  (5824,-5310) circle [radius=+108];
\draw[fill=white]  (7020,-4500) circle [radius=+108];
\filldraw  (7290,-3105) circle [radius=+108];
\draw[fill=white]  (6525,-2835) circle [radius=+108];
\draw[fill=white]  (5625,-1935) circle [radius=+108];
\draw[fill=white]  (4500,-2340) circle [radius=+108];
\draw[fill=white]  (5310,-3150) circle [radius=+108];
\filldraw  (6030,-3465) circle [radius=+108];
\filldraw  (5786,-2520) circle [radius=+108];
\filldraw  (5175,-2385) circle [radius=+108];
\filldraw  (4498,-2982) circle [radius=+108];
\endtikzpicture%
\tikzpicture[x=+\dimen1, y=+\dimen3,scale=0.35]
\clip(847,-7863) rectangle (8490,-1276);
\tikzset{inner sep=+0pt, outer sep=+0pt}
\draw (3375,-7155)--(4770,-6300)--(3825,-5040)--(2475,-5850)--cycle;
\draw (2970,-3600)--(4500,-3015)--(4950,-4275);
\draw (4500,-3015)--(4500,-2340)--(5175,-2385);
\draw (5805,-5310)--(7020,-4500)--(6030,-3465)--(4950,-4275);
\draw (3825,-5040)--(4950,-4275)--(5850,-5310);
\draw (4770,-6300)--(5805,-5310);
\draw (6525,-2835)--(7290,-3105)--(7020,-4500);
\draw (6525,-2835)--(6030,-3465);
\draw (5805,-2565)--(6525,-2835);
\draw (5310,-3150)--(6030,-3465);
\draw (5310,-3150)--(4500,-3015);
\draw (5310,-3150)--(5175,-2340);
\draw (5625,-1935)--(5175,-2385);
\draw (5625,-1935)--(5805,-2520);
\draw (5805,-2520)--(5310,-3150);

\draw[very thick]  (2475,-5850) circle [radius=+1598];
\draw[very thick]  (2970,-3600) circle [radius=+1652];
\draw[fill=yellow] 
(2970,-3600)--(1575,-4500)--(2475,-5850)--(3825,-5040)--cycle;
\draw[ultra thin, color=black!50] (3828,-5011) arc[start 
angle=-58.7, end angle=-143.5, radius=+1652];
\draw[ultra thin, color=black!50] (3828,-5011) arc[start 
angle=+31.6, end angle=+119.5, radius=+1598];

\filldraw  (4500,-3015) circle [radius=+64];
\draw[very thick]  (5287,-3150) circle [radius=+799];
\draw[fill=white]  (2475,-5850) circle [radius=+108];
\draw[fill=white]  (4770,-6300) circle [radius=+108];
\filldraw  (3375,-7155) circle [radius=+108];
\draw[very thick]  (4680,-6300) circle [radius=+1533];
\filldraw  (3825,-4995) circle [radius=+108];
\filldraw  (1620,-4500) circle [radius=+108];
\draw[fill=white]  (4950,-4275) circle [radius=+108];
\draw[very thick]  (4500,-2340) circle [radius=+676];
\filldraw  (5824,-5310) circle [radius=+108];
\draw[fill=white]  (7020,-4500) circle [radius=+108];
\filldraw  (7290,-3105) circle [radius=+108];
\draw[very thick]  (7020,-4500) circle [radius=+1440];
\draw[very thick]  (6525,-2835) circle [radius=+801];
\draw[very thick]  (4950,-4275) circle [radius=+1351];
\draw[fill=white]  (2970,-3600) circle [radius=+108];
\draw[fill=white]  (6525,-2835) circle [radius=+108];
\draw[very thick]  (5625,-1935) circle [radius=+630];
\draw[fill=white]  (5625,-1935) circle [radius=+108];
\draw[fill=white]  (4500,-2340) circle [radius=+108];
\draw[fill=white]  (5310,-3150) circle [radius=+108];
\filldraw  (6030,-3465) circle [radius=+108];
\filldraw  (5786,-2520) circle [radius=+108];
\filldraw  (5175,-2385) circle [radius=+108];
\filldraw  (4498,-2982) circle [radius=+108];
\endtikzpicture%
\caption{{\it Left:} An example of a b-quad-graph $\mathscr D$ (black edges, 
quadrilateral faces and bicolored vertices); {\it right:} A corresponding circle 
pattern $\mathscr C$ for 
$\mathscr D$ with indicated kite pattern $\mathscr K$ isomorphic to $\mathscr 
D$, one of the kites is colored}
\label{figbquad}
\end{figure}

As we are interested in {\em different} circle patterns with the same 
combinatorics and the same intersection angles, we describe the combinatorics 
using an abstract strongly regular cell decomposition $\mathscr D$ 
of (a part of) $\C$ which may be realized as a cell decomposition $\mathscr K$ 
with kites.
The set of faces of $\mathscr D$ is denoted by $F({\mathscr D})$. We always 
assume that the 1-skeleton of $\mathscr D$ is a bipartite 
graph and that the vertices are colored white and black as in 
Figure~\ref{figbquad}~(left). Such a cell 
decomposition $\mathscr D$  will be called {\em b-quad-graph}. 
We are interested in planar embedded circle patterns $\mathscr C$ for a given
b-quad-graph $\mathscr D$ which means that the combinatorics of the 
corresponding kite pattern $\mathscr K$ is isomorphic to $\mathscr D$.
In this article, we only consider {\em embedded} circle patterns, that is,
different kites have mutually disjoint interiors.
Therefore, we sometimes identify (the abstract 
cell decomposition) $\mathscr D$ with the kite 
pattern $\mathscr K$ and we will frequently omit the word `embedded'.

To the quadrilateral faces of a b-quad graph $\mathscr D$ we associate a 
function $\alpha:F({\mathscr D})\to (0,\pi)$, called {\em labelling}. 
Given an embedded circle pattern $\mathscr C$ for $\mathscr D$ and the 
corresponding pattern of kites $\mathscr K$, we can read off for every kite
the (exterior) {intersection angle} between the circles, as 
illustrated in Figure~\ref{figKite}~(left),
and compare this to
the value $\alpha(f)$ for the corresponding face $f$ of $\mathscr D$. If all 
these values agree, then $\mathscr C$ is called a {\em circle pattern for 
$\mathscr D$ and $\alpha$}. Note that we only consider {\em planar} circle 
patterns and will therefore omit the notion `planar' in the following.
But planarity of the circle pattern implies that the 
intersection angles at an interior black vertex sum up to $2\pi$. Therefore, we 
demand as a necessary condition for the labelling $\alpha$ that at 
all interior black vertices $v$ we have
\begin{equation}\label{condalpha}
  \sum_{f \text{ incident to } v} \alpha(f) = 2\pi.
\end{equation}
Such a labelling $\alpha$ of the faces will be called {\em admissible}.
\par\medskip

\begin{figure}
\ifx\XFigwidth\undefined\dimen1=0pt\else\dimen1\XFigwidth\fi
\divide\dimen1 by 1821
\ifx\XFigheight\undefined\dimen3=0pt\else\dimen3\XFigheight\fi
\divide\dimen3 by 1176
\ifdim\dimen1=0pt\ifdim\dimen3=0pt\dimen1=4143sp\dimen3\dimen1
  \else\dimen1\dimen3\fi\else\ifdim\dimen3=0pt\dimen3\dimen1\fi\fi
\tikzpicture[x=+\dimen1, y=+\dimen3]
\clip(2672,-2713) rectangle (4493,-1537);
\tikzset{inner sep=+0pt, outer sep=+0pt}
\pgfsetfillcolor{white}
\filldraw  (3093,-2125) circle [radius=+33];
\filldraw  (3904,-2125) circle [radius=+33];
\pgfsetcolor{.}
\filldraw  (3402,-1865) circle [radius=+33];
\filldraw  (3402,-2401) circle [radius=+33];
\draw[very thick]  (3904,-2125) circle [radius=+574];
\draw[very thick]  (3093,-2125) circle [radius=+406];
\pgfsetfillcolor{yellow}
\filldraw (3095,-2126) --(3403,-1861)--(3907,-2126)--(3398,-2391)--cycle;
\draw[ultra thin, color=black!50] (3398,-2393) arc[start angle=+-41.2, end 
angle=+25.5, radius=+406];
\draw[ultra thin, color=black!50] (3398,-2393) arc[start angle=+208.2, end 
angle=+165.5, radius=+574];
\fill[fill=yellow]
(3271,-1979)--(3273,-1980)--(3277,-1982)--(3285,-1986)--(3294,-1990)--(3306,
-1995)
  
--(3319,-1999)--(3334,-2004)--(3351,-2007)--(3370,-2010)--(3392,-2011)--(3417,
-2011)
  
--(3442,-2009)--(3466,-2005)--(3486,-2000)--(3505,-1994)--(3522,-1988)--(3538,
-1982)
  
--(3552,-1976)--(3563,-1970)--(3572,-1966)--(3577,-1963)--(3580,-1962) --(3402,
-1865)--cycle;
\draw (3271,-1979) arc[start angle=+-115.0, end angle=+-57.5,radius=+328];
\coordinate [label={below:$\alpha$}] (G) at (3398, -1915);
\endtikzpicture%
\hfill
\begin{tikzpicture}[x=1.0cm,y=1.0cm,scale=0.3]
\clip(-8.3,-2.52) rectangle (12.12,9.5);
\draw[very thick](1.62,1.32) circle (0.8655634003352958cm);
\draw[very thick](3.2328243459690333,2.6884570208222103) circle 
(1.2495907285139565cm);
\draw[dashed](1.9,2.32) circle (0.5813776741499452cm);
\draw[very thick](1.0440966422730462,3.051087438658409) circle 
(0.9588072850579203cm);
\draw[dashed](2.121868047303161,3.590154374218259) circle 
(0.7080088329765607cm);
\draw[very thick](2.214411257544721,5.303385656555337) circle 
(1.5793975059430259cm);
\draw[dashed](0.6232554848541616,4.3250721755571675) circle 
(0.9611058386547104cm);
\draw[very thick](-3.450131067664543,5.241404237126853) circle 
(4.040749733961258cm);
\draw[dashed](-0.17927992929535486,2.771278134068165) circle 
(0.7877049042019487cm);
\draw[very thick](-0.14071528239912867,1.5482192694258121) circle 
(0.9182671086615662cm);
\draw[dashed](0.8621000298939481,2.0109564826755766) circle 
(0.5018880317616949cm);
\draw[dashed](3.6928600787966377,4.217869196206713) circle 
(0.9837482006308076cm);
\draw[very thick](6.5218757202127815,5.012002007088517) circle 
(2.777408184525129cm);
\draw[dashed](4.8864789619475495,2.4965209652940263) circle 
(1.1109512456575303cm);
\draw[very thick](5.537606654114753,0.08517185708964223) circle 
(2.227349755827358cm);
\draw[dashed](2.577704142011834,1.5352899408284022) circle 
(0.45546984649380334cm);
\draw[very thick](2.944049976286953,0.9658318427967093) circle 
(0.5050362579465297cm);
\draw[dashed](3.6123015282077824,1.3652049802178494) circle 
(0.593072058477195cm);
\end{tikzpicture}
\caption{{\it Left:} The exterior intersection angle $\alpha$
of two intersecting circles and the associated
  kite built from centers and intersection points; {\it right:} 
An example of a circle packing (black circles) with additional 
circles (dashed) added to obtain an orthogonal circle pattern}\label{figKite}
\end{figure}
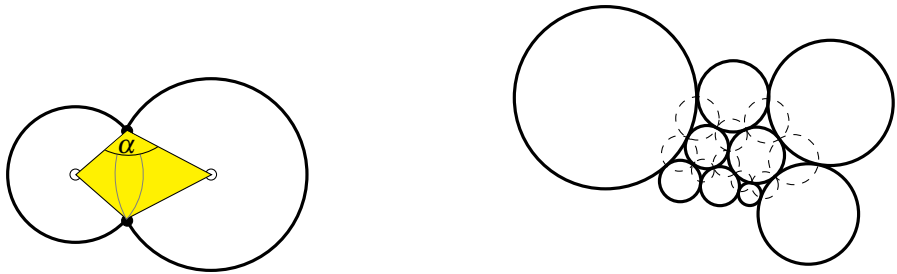

Circle patterns for given combinatorics and intersection angles have been 
introduced and studied for example in~\cite{Sch92,BS93,Ri94,Sch97,BS02}.
We recall some results in Section~\ref{CircelPatterns}.
Moreover, circle patterns generalize planar {\em circle packings} which are 
configurations of discs corresponding to a triangulation of a planar domain 
where vertices correspond to discs and for each edge the two corresponding 
discs touch. Every such circle packing can be understood as an orthogonal 
circle pattern by 
adding circles corresponding to the triangular faces through the three touching 
points, see Figure~\ref{figKite}~(right) for an example. The corresponding 
intersection angles are all $\pi/2$.
More details can for example be found in the monograph~\cite{St05} and in the
references therein. 

Our first question in this article concerns a discrete analogue of Liouville's 
theorem in complex analysis, that is ``Any bounded entire function is 
constant''. To this end, we investigate infinite circle 
patterns whose corresponding kite patterns fill the whole complex plane $\C$ or 
the unit disc $\D$. Obviously, the underlying b-quad-graph is also infinite. We 
always assume that these infinite circle patterns are {\em locally finite} 
in $\C$ or $\D$, that is, for every compact subset of $\C$ or $\D$, 
respectively, there are only finitely many kites of the corresponding kite 
pattern which intersect this set.

For such infinite circle patterns, rigidity of these configurations is a 
natural question. In particular, given two infinite locally finite embedded 
circle patterns ${\mathscr C}$ and $\widetilde{\mathscr C}$ with the same 
combinatorics and the same intersection angles, they should be related by an 
Euclidean similarity in $\C$ or a hyperbolic isometry in $\D$, respectively.
In~\cite{He99}, He proved rigidity for 
infinite embedded locally finite Thurston-type circle patterns with exterior 
intersection angles in $[\pi/2,\pi]$. These are patterns where circles 
correspond to vertices of a 
triangulation and for each edge the two corresponding circles intersect with 
given exterior intersection angles (with an additional condition
for the case $\alpha(f)=\pi/2$). Note that Thurston-type circle 
patterns are in general not circle patterns with respect to our definition used 
in this paper, but some special classes of circle patterns are included. In 
particular, He's result holds for orthogonal circle patterns, that is circle 
patterns as defined above with $\alpha\equiv\pi/2$.
Adapting ideas of He's proof,
we show that rigidity holds for locally finite circle patterns in $\D$ without 
restrictions on intersection angles.

\begin{theorem}[Rigidity of infinite circle patterns in the hyperbolic 
disc]\label{theoRidhyp}
 Let $\mathscr D$ be a b-quad-graph 
and let $\alpha:F({\mathscr D})\to (0,\pi)$
be an admissible labelling. Let ${\mathscr C}$ and $\widetilde{\mathscr C}$ be 
two locally finite embedded infinite circle patterns for $\mathscr D$ and 
$\alpha$ which are contained in $\D$. Assume that the kites of the 
corresponding kite patterns ${\mathscr K}$ and $\widetilde{\mathscr K}$ cover 
the whole unit disc $\D$ in each case.
Then there is a hyperbolic isometry $f:\D\to\D$ such that $\widetilde{\mathscr
C}=f({\mathscr C})$.
\end{theorem}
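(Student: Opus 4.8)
\emph{Plan of proof.} The plan is to adapt He's maximum‑principle argument for disk patterns to b‑quad‑graphs with an arbitrary admissible labelling, the comparison quantity being the logarithmic ratio of hyperbolic radii. First I would reduce the statement to the equality of hyperbolic radii. Using that $\mathscr C$ and $\widetilde{\mathscr C}$ are locally finite, embedded and cover $\D$, one checks that every circle of either pattern is a genuine hyperbolic circle of finite radius; let $\rho(v)$ and $\widetilde\rho(v)$ be the hyperbolic radii of the circles at a white vertex $v$. A kite of the pattern is determined up to a hyperbolic isometry by the two hyperbolic radii of its circles together with the intersection angle $\alpha(f)$ of the corresponding face. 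Hence if $\rho\equiv\widetilde\rho$, corresponding kites of $\mathscr C$ and $\widetilde{\mathscr C}$ are hyperbolically congruent, and since $\mathscr D$ is connected and both kite patterns are embedded (hence realised with consistent orientation), matching one kite by a hyperbolic isometry $f$ and propagating across shared edges yields $\widetilde{\mathscr C}=f(\mathscr C)$. So it suffices to prove $\rho(v)=\widetilde\rho(v)$ for every white vertex $v$.

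Set $s(v):=\log\widetilde\rho(v)-\log\rho(v)$. Since $\mathscr D$ covers the \emph{open} disc and is locally finite, it has no boundary vertices, so at every white vertex $v$ the closing‑up condition for the petals gives, in \emph{both} patterns,
\[
 \sum_{f\ni v}\Phi_v(f)=2\pi ,
\]
where $\Phi_v(f)$ is the hyperbolic angle at $v$ inside the kite $f$. From Section~\ref{CircelPatterns} I would use two properties of the function $\Phi_v(f)=\Phi(\rho(v),\rho(v');\alpha(f))$, where $v'$ is the second white vertex of $f$: (i) $\Phi$ is strictly decreasing in $\rho(v)$ and strictly increasing in $\rho(v')$; and (ii) the strict hyperbolic rescaling property: for every $c>1$ one has $\Phi(c\,\rho(v),c\,\rho(v');\alpha)<\Phi(\rho(v),\rho(v');\alpha)$, i.e. enlarging all hyperbolic radii of a flower by a common factor strictly decreases the angle sum at its centre (this is the genuinely hyperbolic input, which fails in the Euclidean case).

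Now suppose $s\not\equiv 0$; interchanging the two patterns if necessary, $M:=\sup_v s(v)>0$. If $M$ were attained at a white vertex $v_0$, put $c=e^{M}>1$: then $\widetilde\rho(v_0)=c\,\rho(v_0)$ while $\widetilde\rho(v')=e^{s(v')}\rho(v')\le c\,\rho(v')$ for each white neighbour $v'$ of $v_0$, so by (i) $\Phi_{v_0}(f;\widetilde\rho)\le\Phi(c\,\rho(v_0),c\,\rho(v');\alpha(f))$ for every $f\ni v_0$. Summing over $f\ni v_0$, using the angle‑sum identity for $\widetilde{\mathscr C}$ on the left and property (ii) with factor $c$ on the right, one obtains $2\pi\le\sum_{f\ni v_0}\Phi(c\,\rho(v_0),c\,\rho(v');\alpha(f))<\sum_{f\ni v_0}\Phi_{v_0}(f;\rho)=2\pi$, a contradiction. (This is exactly the finite‑pattern rigidity argument of Thurston/Rivin type.)

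The main obstacle is that on an infinite b‑quad‑graph $M$ need not be attained: an approximately maximising sequence $v_n$ must, by local finiteness, leave every compact subset of $\D$, so $v_n\to\partial\D$, and there the circles become Euclidean‑small, which is precisely the regime in which the strict inequality in (ii) degenerates. I would overcome this by renormalising: choose hyperbolic isometries $g_n\colon\D\to\D$ with $g_n(v_n)=0$ and consider the patterns $\mathscr C_n:=g_n(\mathscr C)$ and $\widetilde{\mathscr C}_n:=g_n(\widetilde{\mathscr C})$, again locally finite embedded patterns covering $\D$, rooted at the origin, with rerooted b‑quad‑graph $\mathscr D_n$ and labelling; hyperbolic radii are unchanged, so the associated $s_n$ satisfies $\sup s_n=M$ and $s_n(\text{root})=s(v_n)\to M$. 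The technical heart is then a compactness step: using local finiteness of the $\mathscr C_n$ near the origin together with a priori control (a ring‑lemma‑type estimate) on the hyperbolic radius of the root circle in both patterns — this is where the hypotheses that the patterns are locally finite and cover all of $\D$ really enter — one passes to a subsequence along which $\mathscr D_n$ converges to a limit b‑quad‑graph $\mathscr D_\infty$ covering $\D$ and $\mathscr C_n,\widetilde{\mathscr C}_n$ converge to locally finite embedded patterns $\mathscr C_\infty,\widetilde{\mathscr C}_\infty$ for $\mathscr D_\infty$ with the same labelling, whose ratio function $s_\infty$ is $\le M$ everywhere and equals $M$ at the root. The attained case above, applied to $\mathscr C_\infty$ and $\widetilde{\mathscr C}_\infty$, now gives a contradiction, so $s\equiv 0$, hence $\rho\equiv\widetilde\rho$, and the reduction in the first paragraph finishes the proof. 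I expect essentially all of the real work to lie in this renormalisation/compactness step — obtaining the a priori bounds without any bounded‑degree hypothesis, and checking that the angle‑sum equations, embeddedness, and the covering property survive in the limit.
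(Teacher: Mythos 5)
There are two genuine gaps, and the first is a concretely false step. Your monotonicity (ii) does not hold for general intersection angles. Write $u(r)=\log\tanh(r/2)$; by the formula in Section~\ref{SecRigidity}, the half-angle at the centre $v_0$ of a kite is $\varphi=f_\theta\bigl(u(r_1)-u(r_0)\bigr)-f_\theta\bigl(u(r_1)+u(r_0)\bigr)$, where $r_0,r_1$ are the hyperbolic radii. Differentiating along $r_i\mapsto c\,r_i$ at $c=1$ gives $\tfrac{d\varphi}{dc}=A(a_1-a_0)-B(a_1+a_0)$ with $a_i=r_i/\sinh r_i$, $A=f_\theta'(u_1-u_0)>0$, $B=f_\theta'(u_1+u_0)>0$; since $A>B$ always and $a_1>a_0$ exactly when the petal is smaller than the centre, the sign is not automatic. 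In the small-radius regime $r_i=\eps s_i$, $\eps\to0$, the condition $\tfrac{d\varphi}{dc}<0$ reduces to $2s_0^2+4s_1^2-6s_0s_1\cos\theta>0$, which fails e.g.\ for $\cos\theta=0.96$ and $s_0/s_1=1.4$: multiplying all hyperbolic radii of a flower by a common $c>1$ can strictly \emph{increase} the angle at the centre when the exterior intersection angle is small, and one can build a flower in which every petal is in this regime. What is true, and what the paper exploits, is the additive statement in the coordinate $\rho=\log\tanh(r_{\text{hyp}}/2)$: adding a common constant to $\rho_0,\rho_1$ leaves $f_\theta(\rho_1-\rho_0)$ unchanged and strictly increases $f_\theta(\rho_1+\rho_0)$, hence strictly decreases $\varphi$. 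Consequently the correct comparison quantity is a \emph{difference} of radii (in this coordinate), not a ratio; this is exactly the content of Lemma~\ref{lemMaxHyp}.

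The second gap is the renormalisation/compactness step, which you rightly identify as carrying all the weight but which cannot be carried out in the stated generality: Theorem~\ref{theoRidhyp} imposes no bound on vertex degrees, no convexity of kites, and no bound on the labelling away from $0$ and $\pi$, so there is no ring-lemma-type a priori estimate. Along a maximising sequence $v_n\to\partial\D$ the hyperbolic radii $r_{\text{hyp}}(v_n)$, $\tilde r_{\text{hyp}}(v_n)$ may tend to $0$ or to $\infty$ and the combinatorics at $v_n$ may degenerate, so the limit flower need not exist and $s_\infty$ need not attain $M$ at the root. (For comparison: in the Euclidean setting the analogous comparability of radii is Lemma~\ref{lemCompRad}, it requires the extra hypotheses of Theorem~\ref{theoRidEuc} --- convex kites and $\alpha\geq\alpha_0$ --- and its proof occupies the entire appendix.) The paper avoids attainment of suprema altogether: it proves the generalized maximum principle, Lemma~\ref{lemMaxHypGen}, in which boundary circles crossing $\partial\D$ count as arbitrarily large so that the boundary hypothesis becomes vacuous for them, applies it to an exhaustion of $G$ after slightly dilating $\widetilde{\mathscr C}$ so that the outer circles of each finite subpattern leave $\D$, and then lets the dilation tend to the identity. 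This yields $\tilde r_{\text{hyp}}\geq r_{\text{hyp}}$ everywhere and, by symmetry, equality; this is He's proof of his Theorem~1.2, which the paper invokes verbatim. I would recommend replacing the blow-up argument by this exhaustion argument, and replacing $\log(\tilde\rho/\rho)$ by the direct comparison of the functions $\rho$.
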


The proof relies on a generalized maximum principle for the radius function of 
the circle pattern in the hyperbolic disc which is shown in 
Section~\ref{SecRigidity}.

For infinite circle patterns covering the whole complex plane we adapt further 
ideas of He~\cite{He99} and prove rigidity for the class of infinite circle 
patterns whose intersection angles are uniformly bounded from below.

\begin{theorem}[Rigidity of infinite Euclidean circle 
patterns]\label{theoRidEuc}
 Let $\mathscr D$ be a b-quad-graph, 
$\alpha_0\in(0,\pi/2]$ and let $\alpha:F({\mathscr D})\to (\alpha_0,\pi)$
be an admissible labelling. Let ${\mathscr C}$ and $\widetilde{\mathscr C}$ be 
two locally finite embedded infinite circle patterns for $\mathscr D$ and 
$\alpha$.
Assume that the kites of the 
corresponding kite patterns ${\mathscr K}$ and $\widetilde{\mathscr K}$ cover 
the whole plane $\C$ in each case.
Then there is a Euclidean similarity $f:\C\to\C$ such that $\widetilde{\mathscr
C}=f({\mathscr C})$.
\end{theorem}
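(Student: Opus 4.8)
The plan is to follow the scheme pioneered by Rodin–Sullivan and refined by He: introduce a multiplicative \emph{ratio function} comparing the two patterns, show that it satisfies a discrete maximum-type principle, and conclude via a Schwarz-lemma / length-area argument that it must be constant. For each white vertex $v$ (center of a circle) let $r(v)$ and $\widetilde r(v)$ denote the radii in $\mathscr C$ and $\widetilde{\mathscr C}$, and set $s(v)=\widetilde r(v)/r(v)$. The first step is to derive, for each edge $vw$ of $G(\mathscr D)$ joining two neighboring circles, a monotonicity relation between $s(v)/s(w)$ and the way the angle subtended at $v$ in the kite changes. Concretely, fixing the intersection angle $\alpha(f)$ of the kite, the half-angle $\psi$ at the white vertex $v$ of the kite is a function $\psi=\Psi(r(v)/r(w),\alpha(f))$ which is strictly decreasing in the ratio $r(v)/r(w)$; the convexity hypothesis on all kites guarantees $\psi\in(0,\pi/2)$ and in particular keeps us in the regime where $\Psi$ is smooth and strictly monotone, with derivative bounds depending only on $\alpha_0$. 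Summing the angles $2\psi$ around each interior white vertex gives $2\pi$ in both patterns, so $\sum_{f\ni v}\Psi(r(v)/r(w),\alpha(f))=\sum_{f\ni v}\Psi(\widetilde r(v)/\widetilde r(w),\alpha(f))$; this is the closing condition that forces $s$ to behave like a discrete superharmonic/subharmonic function. The routine computation of $\Psi$ and its monotonicity I will only sketch.

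The second step is the maximum principle. From the angle-sum identity one shows: if $s$ attains a local maximum at an interior white vertex $v_0$ (i.e. $s(v_0)\ge s(w)$ for all neighbors $w$), then in fact $s(v_0)=s(w)$ for every neighbor $w$, and hence — since the kites are rigid once the radius ratios and the fixed angles $\alpha(f)$ are prescribed — the kites around $v_0$ in $\widetilde{\mathscr C}$ are obtained from those in $\mathscr C$ by the single scaling factor $s(v_0)$. Propagating through the connected graph $G(\mathscr D)$ yields that $s$ is globally constant and the two patterns are similar. So everything reduces to showing that $s$ \emph{cannot} have a genuine interior maximum unless it is constant, i.e.\ that $\limsup_{v\to\infty} s(v)\ge \sup_v s(v)$ (and symmetrically $\liminf$); combined with the analogous statement for $1/s$ this pins $s$ down once we rule out nontrivial behavior at infinity.

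The third, and genuinely hard, step is controlling the pattern at infinity — this is exactly where He's length–area method enters and where the hypotheses "$\alpha>\alpha_0$" and "all kites convex" are essential. The idea is: suppose $s$ is nonconstant; after normalizing we may assume $\sup_v s(v)=\lambda>1$ is approached along a sequence of vertices escaping to $\infty$. Consider the combinatorial annuli (chains of kites) separating a fixed base vertex from the escaping sequence. Using the ring lemma — which holds here precisely because the intersection angles are bounded below by $\alpha_0$ and the kites are convex, so that neighboring radii $r(v),r(w)$ have a bounded ratio depending only on $\alpha_0$ and the local degree — one estimates from below the combinatorial modulus of these annuli in both metrics. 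A discrete length–area inequality (Cauchy–Schwarz over the kites crossed by separating curves) then shows that the ratio of moduli of the two patterns along a long chain is controlled, which is incompatible with $s$ being bounded away from $1$ on an unbounded set while being $\le 1$ somewhere: the areas would have to diverge in $\mathscr C$ while staying finite after rescaling by a bounded factor, a contradiction with local finiteness. Making this rigorous is the crux; I expect to import the relevant ring lemma and length–area estimates essentially verbatim from He~\cite{He99}, the only new input being the verification that the convex-kite, $\alpha>\alpha_0$ setting supplies the same uniform bounds He obtained for Thurston-type patterns. Once the behavior at infinity is excluded, the maximum principle of step two finishes the proof.
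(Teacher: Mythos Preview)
Your overall strategy---study the ratio $s(v)=\tilde r(v)/r(v)$, establish a maximum principle, and control the behaviour at infinity via length--area---is in the spirit of He's method and of the paper. But there are two genuine gaps.

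First, the maximum principle you state in step~2 (no strict interior maximum for $s$) is correct---it is Lemma~\ref{lemMaxEuc}---but it is \emph{not} enough to conclude constancy of a bounded function on an infinite graph: $s$ could be strictly monotone along an escaping sequence without ever attaining its supremum. The paper closes this gap by a different mechanism. After proving that $s$ is uniformly bounded (Lemma~\ref{lemCompRad}), it interpolates: set $\hat r(v,t)=r(v)^{1-t}\tilde r(v)^t$, build circle patterns with these boundary data on a finite exhaustion (Dirichlet principle), pass to a limit family $\mathscr C_t$, and differentiate. The function $h(v)=\frac{d}{dt}\log r_t(v)$ is then genuinely \emph{harmonic} for the natural conductance $\mu(e)=2f'_{\alpha_e}(\log r_t(v_1)-\log r_t(v_0))$ (Lemma~\ref{lemharm}), not merely max-principle-satisfying. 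One then shows $(G,\mu)$ is recurrent (Lemma~\ref{lemrec}) and invokes the Liouville theorem for bounded harmonic functions on recurrent networks to get $h$ constant; integrating in $t$ gives $\log s$ constant. Your sketch contains neither the interpolation step producing a harmonic function nor the recurrence argument, and without one of these the conclusion does not follow.

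Second, your step~3 leans on a ring lemma (``neighbouring radii have bounded ratio depending only on $\alpha_0$ and the local degree''), but Theorem~\ref{theoRidEuc} carries \emph{no} degree bound, so this is not available. The paper's proof of the key boundedness estimate $1/C\le s\le C$ (Lemma~\ref{lemCompRad}, Appendix) instead uses vertex extremal length: it verifies three geometric conditions on the circle pattern (area covering, comparable inscribed discs, and an arc-length estimate~\eqref{estdl}) that hold precisely under the convex-kite and $\alpha\ge\alpha_0$ hypotheses, derives the VEL bounds of Lemmas~\ref{lemEstVel} and~\ref{lemIntRad}, and combines them with the generalized hyperbolic maximum principle (Lemma~\ref{lemMaxHypGen}) applied to suitably rescaled subpatterns. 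That argument is substantially more delicate than a ring-lemma bound and is where the hypotheses really enter.
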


The proof is given in Section~\ref{secRigEuc}. Some parts of the proof rely 
heavily on estimates using vertex extremal length for graphs. This notion and 
some properties are introduced in Section~\ref{secVEL}.

\par\medskip

Recall that smooth conformal maps preserve intersection angles. Therefore,
pairs of circle patterns with the same combinatorics may be considered as 
discrete analogues of smooth conformal maps if all corresponding interesection 
angles agree.
More precisely, we consider a piecewise linear map between two kite 
patterns corresponding to (finite) embedded circle 
patterns as a {discrete conformal map}.

\begin{definition}\label{deffdiamond}
Let ${\mathscr C}$ and $\widetilde{\mathscr C}$ be two  embedded circle 
patterns with the same combinatorics and the same intersection angles. Let 
${\mathscr K}$ and $\widetilde{\mathscr K}$ be the corresponding kite patterns. 
Split every kite of these patterns into two triangles along the symmetry axis 
which connects its two white vertices. Define a global homeomorphism 
$f^\Diamond:{\mathscr K}\to \widetilde{\mathscr K}$ by the condition that the 
restriction of $f^\Diamond$ to each such triangle of ${\mathscr K}$ is an 
affine-linear map onto the corresponding triangle of $\widetilde{\mathscr K}$. 
In particular, $f^\Diamond$ maps black and white vertices of ${\mathscr K}$ to 
corresponding points of $\widetilde{\mathscr K}$. By abuse of notation we also 
denote this {\em discrete conformal map} as $f^\Diamond:{\mathscr C}\to 
\widetilde{\mathscr C}$.
\end{definition}

The second aim of this article is to study some sequences of discrete 
conformal maps and show that they approximate smooth conformal maps.

The convergence of discrete conformal maps based on circle packings was first 
conjectured by Thurston~\cite{Thu85} and then 
proven for regular hexagonal combinatorics, see~\cite{RS87,HeSch98}. 
Furthermore, He and Schramm studied in~\cite{HeSch96} the approximation of a 
conformal homeomorphism by circle packings with arbitrary combinatorics.

Circle patterns with the combinatorics of the 
square grid and their relations to smooth conformal maps have been 
studied in~\cite{Sch97,Ma05,LD07}. These regular circle patterns form a special 
case of {\em isoradial} circle patterns where all circles have the same radius.
For isoradial circle patterns with uniformly bounded intersection angles 
convergence has been shown in~\cite{Bue08}.
Using ideas from~\cite{HeSch96} and~\cite{RS87}, we prove in this article
convergence to a conformal map for the class of {\em q-bounded convex circle 
patterns}. These circle patterns satisfy the condition that all kites are 
convex and the 
quotients of lengths of the diagonals of the kites are uniformly bounded in 
$[1/q, q]$, see Definition~\ref{defqbound}. In particular, we obtain the 
following result.

\begin{theorem}\label{theoConv}
 Let $D$ and $\widetilde{D}$ be two simply connected bounded domains in $\C$. 
Let $p_0\in D$ be some `reference' point.
Let $({\mathscr D}_n)_{n\in\N}$ be a sequence of finite b-quad-graphs which are 
cell decompositions of a topological closed disc.

Let $q>1$. For every $n\in\N$ 
assume that ${\mathscr C}_n$ and $\widetilde{\mathscr C}_n$ are two embedded 
convex q-bounded
 circle patterns for ${\mathscr D}_n$ and some admissible labelling 
$\alpha_n$ whose kites all lie in $D$ and $\widetilde{D}$ respectively.

Let $(\delta_n)_{n\in\N}$ be a sequence of positive numbers such that 
$\delta_n\searrow 
0$ for $n\to\infty$. For each $n\in\N$ denote by $r_n(v)$ the radius of the 
circle in ${\mathscr C}_n$ for the white vertex $v$ and assume that 
$r_n(v)<\delta_n/2$ for all 
white vertices. Suppose further that the Euclidean distance of the subset 
covered by the kites of ${\mathscr C}_n$ to the boundary $\partial D$ is 
smaller than $\delta_n$, 
i.e.\ $d({\mathscr S}_n,\partial D)<\delta_n$ where ${\mathscr S}_n$ is the 
union of all kites of ${\mathscr K}_n$, and also  
$d(\widetilde{\mathscr S}_n,\partial \widetilde{D})<\delta_n$ where 
$\widetilde{\mathscr S}_n$ is the union of all kites of $\widetilde{\mathscr 
K}_n$. 
Finally, let $p_0$ be covered by a kite for every $n\in\N$ and let the 
closure of the image points $\overline{(f_n^\Diamond(p_0))}_{n\in\N}$ be 
compact in $\widetilde{D}$.

Then a subsequence of $(f_n^\Diamond)_{n\in\N}$ converges uniformly on compact 
subsets of $D$ to a conformal homeomorphism $f:D\to\widetilde{D}$.
\end{theorem}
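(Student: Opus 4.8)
The plan is to follow the classical strategy of He--Schramm for circle packings, adapted to the kite-pattern setting using the rigidity result of Theorem~\ref{theoRidEuc} as the key uniqueness ingredient. First I would establish equicontinuity of the family $(f_n^\Diamond)_{n\in\N}$. Because each $f_n^\Diamond$ is piecewise affine on triangles whose shapes are controlled by the q-boundedness and convexity hypotheses, it suffices to bound the ``local scaling'' of $f_n^\Diamond$, i.e. to show that for a fixed compact $K\subset D$ and small $\rho$, the image $f_n^\Diamond(B(x,\rho))$ has small diameter uniformly in $n$, once $n$ is large. This is exactly the place where vertex extremal length enters: using a ring-domain / length-area argument for the graph $G_n$ of circle centers (the machinery developed in Section~\ref{secVEL}), one shows that a long chain of circles separating two boundary components of an annulus in the domain $D$ must map to a chain whose circles shrink, because the combinatorial modulus is large. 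The q-bounded convex condition is precisely what makes radii of neighboring circles and the Euclidean geometry of the kites comparable to the combinatorial data, so that VEL estimates transfer between $\mathscr C_n$ and $\widetilde{\mathscr C}_n$. Together with the diameter-of-image control this gives a uniform modulus of continuity on compact subsets, so by Arzelà--Ascoli a subsequence converges locally uniformly to a continuous map $f:D\to\overline{\widetilde D}$.

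Next I would argue that the limit $f$ is a homeomorphism onto $\widetilde D$ and is conformal. Applying the same extremal-length estimates to the inverse maps $(f_n^\Diamond)^{-1}$ (the hypotheses are symmetric in $\mathscr C_n$ and $\widetilde{\mathscr C}_n$) yields equicontinuity of the inverses as well, so after passing to a further subsequence $(f_n^\Diamond)^{-1}$ converges locally uniformly to a continuous $g:\widetilde D\to\overline D$; the reference-point normalisation ($p_0$ always covered, $\overline{(f_n^\Diamond(p_0))}$ compact in $\widetilde D$) prevents the limit from degenerating to a constant or escaping to the boundary, and shows $f$ and $g$ are mutually inverse, hence $f$ is a homeomorphism of $D$ onto $\widetilde D$. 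To see $f$ is conformal I would use that $f$ is a non-constant quasiregular-type limit and invoke a removability/rigidity argument: any sufficiently small subdomain together with its image carries, in the limit, two infinite locally finite embedded convex circle patterns with the same combinatorics and labels (obtained by rescaling $\mathscr C_n$ near a point, a standard ``blow-up'' / normal-families step since $\delta_n\to0$ forces the radii to zero and the combinatorics to refine), so Theorem~\ref{theoRidEuc} forces the limiting patterns to be related by a Euclidean similarity. This means the local scaling of $f$ is (infinitesimally) a rotation-dilation everywhere, i.e. $f$ is $1$-quasiconformal, hence conformal by Weyl's lemma.

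Concretely the steps in order are: (1) record the geometric consequences of convexity and q-boundedness — comparability of radii of adjacent circles, lower and upper bounds on the angles of the triangulated kites, and the resulting bi-Lipschitz control of $f_n^\Diamond$ on each triangle in terms of the ratio of corresponding diagonal lengths; (2) prove the key VEL lemma: a separating combinatorial annulus of large modulus in $\mathscr C_n$ maps to one of circles of small total/individual size in $\widetilde{\mathscr C}_n$, and symmetrically; (3) deduce equicontinuity of $(f_n^\Diamond)$ and of $((f_n^\Diamond)^{-1})$ on compact subsets, and apply Arzelà--Ascoli to extract a locally uniformly convergent subsequence with limit $f$ and inverse-limit $g$; (4) use the normalisation at $p_0$ to show $f$ is a non-constant homeomorphism $D\to\widetilde D$ with $g=f^{-1}$; (5) perform the blow-up at an arbitrary interior point, extract an infinite limiting circle pattern, apply Theorem~\ref{theoRidEuc}, and conclude that $f$ has conformal (similarity) local behaviour; (6) invoke Weyl's lemma to upgrade ``$1$-quasiconformal'' to ``conformal''.

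I expect the main obstacle to be step (2) together with its interaction with step (5): making the vertex-extremal-length estimates quantitative enough that they survive the passage to the limit and can be \emph{localized} to arbitrarily small subdomains, while simultaneously ensuring that the blown-up limiting configuration is genuinely an embedded, locally finite, convex, $\alpha_0$-bounded circle pattern covering all of $\C$ (so that Theorem~\ref{theoRidEuc} actually applies) rather than a degenerate or incomplete one. Controlling that the combinatorial neighbourhoods of a fixed point grow to exhaust the plane under rescaling — equivalently, ruling out that all the ``action'' escapes to infinity in the blow-up — is the delicate point; here the convexity of kites and the uniform bound on kite angles, which force each circle to have a definite number of neighbours of comparable size, are what make the argument work, in direct analogy with the ``no thin parts'' phenomenon exploited by He--Schramm.
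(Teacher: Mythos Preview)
Your outline is coherent and follows the Rodin--Sullivan philosophy, but it diverges substantially from what the paper actually does, and your step~(5) hides a genuine circularity.

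\textbf{What the paper does instead.} The paper never invokes Theorem~\ref{theoRidEuc} in the convergence proof. The two key analytic ingredients are different from yours:
\begin{itemize}
\item The radius-ratio bounds on compact sets (your step~(1)) come not from VEL but from the \emph{generalized hyperbolic maximum principle} (Lemma~\ref{lemMaxHypGen}): one rescales a disc in $\widetilde D$ to $\D$, compares to the rescaled pattern in $D$, and reads off $\tilde r_n(v)\asymp r_n(v)$ on compacta (Lemma~\ref{lemEstrn}). This immediately makes $f_n^\Diamond$ uniformly $k$-quasiconformal on compacts, and normal-family theory for qc maps gives the convergent subsequence.
\item Conformality of the limit is obtained by showing the dilatation tends to~$1$. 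The function $u_n=\tilde r_n/r_n$ is proved \emph{subharmonic} for the Laplacian with conductances $\mu(e)=2f'_{\alpha(e)}(\log r_n(v_1)-\log r_n(v_0))$, and $1/u_n$ subharmonic for the analogous $\tilde\mu$ (Lemma~\ref{lemsubharm}). One then shows that the effective resistance from a vertex to the boundary of a fixed Euclidean disc tends to $\infty$ as $\delta_n\to0$ (Lemma~\ref{lemRec}), which via a comparison with the harmonic function equal to $1$ at $v_0$ and $0$ on the boundary forces $u_n(v_1)/u_n(v_0)\to1$ for neighbouring vertices. Hence $k(n,K)\to1$ and the qc limit is $1$-quasiconformal.
\end{itemize}
VEL/length--area does appear, but only in Section~\ref{secTopProp} to prove uniform properness and $\tilde\delta_n\to0$, not for the conformality step.

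\textbf{The gap in your step (5).} For the blow-up to feed into Theorem~\ref{theoRidEuc} you need the rescaled $\widetilde{\mathscr C}_n$ (same subsequence as for $\mathscr C_n$) to converge to a locally finite pattern \emph{filling all of $\C$} with the \emph{same} limiting combinatorics. But ensuring the rescaled $\widetilde{\mathscr C}_n$ does not degenerate requires precisely a two-sided bound $\tilde r_n\asymp r_n$ near the blow-up point --- which is essentially the conformality statement you want. In the hexagonal Rodin--Sullivan case this circularity is broken because the combinatorics is fixed and the Ring Lemma gives the needed compactness a priori; with varying $\mathscr D_n$ you would first need an independent argument (e.g.\ the hyperbolic maximum principle the paper uses) to get $\tilde r_n/r_n$ bounded, and even then extracting a common limiting b-quad-graph for both rescaled patterns is a nontrivial additional step you have not addressed. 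The paper's subharmonicity\,+\,recurrence route sidesteps this entirely.
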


The proof is presented in Section~\ref{secConv} and also relies on the 
generalized maximum principle for the radius function in
Section~\ref{SecRigidity} and on topological properties studied in 
Section~\ref{secTopProp}.

Note that convergence issues on q-bounded kite patterns for 
Dirichlet problems with respect to the linear 
Laplacian (see~\eqref{eqdeflap} below) have recently been investigated 
in~\cite{WB}.

\section{Some properties of circle patterns}\label{CircelPatterns}

We start by introducing some useful notations.

Recall that the combinatorics for a circle pattern is given via a b-quad-graph 
$\mathscr D$, that is a strongly regular cell decomposition of (a part of) $\C$ 
whose 2-cells (faces) are  
embedded quadrilaterals (respecting the counterclockwise orientation of $\C$), 
so in particular there are exactly four edges incident to
each face. Moreover, its 1-skeleton is a bipartite graph with bicolored 
vertices.
It is called {\em simply connected} if it is the cell decomposition of a simply 
connected domain of $\C$.
The two sets of vertices (colored white and black respectively) give rise to 
two planar graphs $G$ and $G^*$ as follows.
The vertices $V(G)$ are all white vertices of 
$V(\mathscr D)$ and correspond to centers of circles. The edges $E(G)$
correspond to faces of $\mathscr D$, that is
two vertices of $G$ are connected by an edge if and only if they
are incident to the same face of $\mathscr D$, see 
Figure~\ref{figLeft}. 
\begin{figure}
\begin{center}
\includegraphics[width=0.45\textwidth]{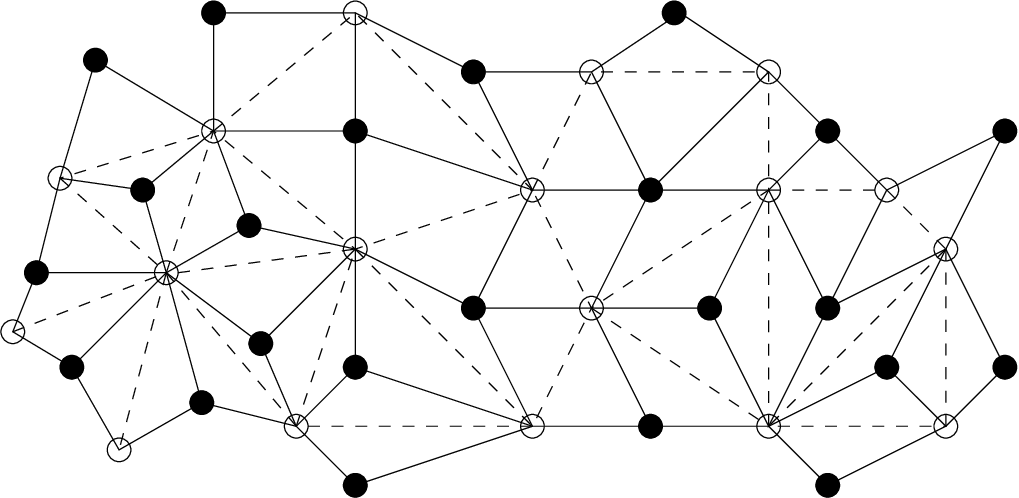}
\end{center}
\caption{An example of a b-quad-graph $\mathscr D$ (black edges and
bicolored vertices) and its associated graph $G$ (dashed edges and white 
vertices)}\label{figLeft}
\end{figure}

We denote by $V_{\text{int}}(G)$ the set of interior vertices of $G$ and by 
$V_\partial(G)$ the set of boundary vertices.
The dual graph $G^*$ is constructed analogously by taking for
$V(G^*)$ all black vertices of $V(\mathscr D)$ which correspond to 
intersection
points. Faces of $\mathscr D$ or edges of $G$ or $G^*$ correspond to
kites of intersecting circles. Furthermore, by abuse of
notation, every labelling $\alpha:F({\mathscr D})\to (0,\pi)$ of the faces of 
$\mathscr D$ can also be understood as a function defined on the edges $E(G)$ 
or $E(G^*)$. We denote the value of this function by $\alpha(f)$ 
or $\alpha_e$.

In this article we are concerned with embedded circle patterns. Nevertheless,
some useful properties naturally include the more general case of immersed
circle patterns. 

\begin{definition}\label{defcircpattern}
Let $\mathscr D$ be a b-quad-graph and let $\alpha:E(G)\to (0,\pi)$
be an admissible labelling.
  An {\em immersed circle pattern for  $\mathscr D$ (or $G$) and
    $\alpha$} is an indexed collection ${\mathscr 
    C}=\{{\mathscr C}(v):v\in V(G)\}$ of circles in $\C$ and an indexed
  collection ${\mathscr K}=\{K_e=K(e):e\in E(G)\}$
of closed kites, which all carry the same orientation, such that the following
conditions hold.
\begin{enumerate}[(1)]
  \item\label{intersectionPoint}
If $v_1,v_2\in V(G)$ are incident vertices in $G$,
the corresponding circles ${\mathscr C}(v_1),{\mathscr C}(v_2)$ intersect 
with exterior intersection angle $\alpha_{[v_1,v_2]}$.
 Furthermore, the kite $K_{[v_1,v_2]}$
    is bounded by the centers of the circles ${\mathscr C}(v_1),{\mathscr 
C}(v_2)$, the two intersection points, and the corresponding edges, as in
Figure~\ref{figKite}~(left).
 The intersection points
are associated to black vertices of $V(\mathscr D)$ or to vertices of $V(G^*)$.
\item The kite pattern is locally isomorphic to the b-quad-graph $\mathscr 
D$ at every interior vertex. 
  \end{enumerate}
\end{definition}

There are also other definitions for
 circle patterns, for example associated to a {\em Delaunay
 decomposition} of a domain in $\C$ for a given set of points (=vertices). This 
is a cell decomposition such
that the boundary of each face is a polygon with straight edges which is
inscribed in a circular disk, and these disks have no vertices in their
interior. The corresponding embedded circle pattern can be associated to the
graph $G^*$.
The Poincar{\'e}-dual decomposition of a Delaunay decomposition with the
centers of the circles as vertices and straight edges is a {\em
    Dirichlet decomposition} (or {\em Voronoi diagram}) and corresponds
  to the graph $G$.

Furthermore the definition of immersed circle patterns can be extended allowing
cone-like singularities in the vertices; see~\cite{BS02} and the references
therein.


Our study of a planar circle pattern ${\mathscr C}$ is based on
characterizations and properties of its
{\em radius function} $r_{\mathscr C}=r$ which assigns to every vertex $z\in V(G)$
the radius $r_{\mathscr C}(z)=r(z)$ of the corresponding circle ${\mathscr 
C}(z)$. The
index $\mathscr C$ will be dropped whenever there is no confusion likely.

The following proposition specifies a necessary and sufficient condition for a
radius function to originate from a planar circle pattern, see also~\cite{BS02}.
For the special case of orthogonal
circle patterns with the combinatorics of the square grid, there are also
other characterizations, see for example~\cite{Sch97}.

\begin{proposition}\label{propRadius}
Let $G$ be a graph constructed from a b-quad-graph $\mathscr
      D$ and let $\alpha$ be an admissible labelling. 

      Suppose that ${\mathscr C}$ is an immersed circle
      pattern for ${\mathscr D}$ and $\alpha$ with radius function
      $r=r_{\mathscr C}$. Then for 
      every interior vertex $v_0\in V_{int}(G)$ we have
      \begin{equation} \label{eqFgen}
        \Biggl(\sum_{[v,v_0]\in E(G)} 
        f_{\alpha_{[v,v_0]}}(\log r(v)-\log r(v_0))\Biggr) -\pi=0,
        \end{equation}
       where
        \begin{equation}\label{defftheta}
         f_\theta(x):=\frac{1}{2i}\log
        \frac{1-e^{x-i\theta}}{1-e^{x+i\theta}},
        \end{equation}
        and the branch of the logarithm is chosen such that
        $0<f_\theta(x)<\pi$. 

Conversely, suppose that $\mathscr D$ is simply connected and
      that $r:V(G)\to(0,\infty)$ satisfies~\eqref{eqFgen} for
      every $v\in  V_{int}(G)$. Then there is an immersed circle pattern
      for $G$ and $\alpha$
whose radius function coincides with $r$.
This pattern is unique up to isometries of $\C$.
\end{proposition}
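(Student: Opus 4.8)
The plan is to reduce everything to the elementary geometry of a single kite, and then, for the converse, to a developing‑map argument for a flat cone metric.

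I would start with one kite. Take two circles of radii $r_1,r_2$ meeting at exterior intersection angle $\alpha$, with centres $c_1,c_2$ and intersection points $p_+,p_-$, and let $\psi_1=\angle p_+c_1c_2$ be half of the angle that the kite occupies at $c_1$ (half, because the diagonal $c_1c_2$ is a symmetry axis of the kite). In the triangle $c_1c_2p_+$ the angle at $p_+$ equals $\alpha$ and the two adjacent sides have lengths $r_1,r_2$, so an elementary computation with the law of sines gives $\tan\psi_1=\tfrac{r_2\sin\alpha}{r_1-r_2\cos\alpha}$, with $\psi_1$ obtuse exactly when the denominator is negative (the non‑convex‑kite case, which by the text occurs only for $\alpha<\pi/2$). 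I would then unwind \eqref{defftheta}: writing the quotient there as $\bar w/w$ with $w=1-e^{x+i\alpha}$ one gets $f_\alpha(x)=-\arg w$, which lies in $(0,\pi)$ since $\operatorname{Im}w=-e^x\sin\alpha<0$, and one checks it has precisely the tangent above for $x=\log r_2-\log r_1$. The conclusion of this step is the geometric meaning of $f$: the quantity $f_{\alpha_e}\bigl(\log r(v)-\log r(v_0)\bigr)$ is exactly \emph{half} of the angle that the kite $K_e$, $e=[v,v_0]$, subtends at the centre of ${\mathscr C}(v_0)$.

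For the forward direction I would fix an interior vertex $v_0$ with $c_0$ the centre of ${\mathscr C}(v_0)$. By condition~(2) of Definition~\ref{defcircpattern} the kites $K_{[v,v_0]}$, $v$ a neighbour of $v_0$ in $G$, are glued cyclically around $c_0$ along the radii of ${\mathscr C}(v_0)$ exactly as the faces of $\mathscr D$ around $v_0$; since they carry a common orientation and the notion of immersed circle pattern excludes cone‑like singularities at interior vertices, they cover a punctured neighbourhood of $c_0$ exactly once, so the angles they occupy there sum to $2\pi$. By the previous step each such angle is $2f_{\alpha_{[v,v_0]}}(\log r(v)-\log r(v_0))$, and dividing by $2$ yields \eqref{eqFgen}. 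For the converse I would build an abstract cell complex by gluing, for every $e=[v_1,v_2]\in E(G)$, the metric kite with interior angle $\alpha(e)$ at each of its two black vertices and sides of length $r(v_1)$, resp.\ $r(v_2)$, at the white vertex $c_{v_1}$, resp.\ $c_{v_2}$. Two kites sharing an edge of $\mathscr D$ share a side of the same length $r(v)$, so the gluing is metrically consistent, and the orientation of the planar complex $\mathscr D$ makes it coherently oriented; this equips the underlying (simply connected) surface with a flat metric whose only possible cone points are the interior vertices. At an interior black vertex the cone angle is $\sum_f\alpha(f)=2\pi$ by admissibility~\eqref{condalpha}, and at an interior white vertex $v_0$ it is $\sum_{[v,v_0]}2f_{\alpha_{[v,v_0]}}(\log r(v)-\log r(v_0))=2\pi$ by \eqref{eqFgen} and the first step. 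Hence the metric is genuinely flat, so the developing map is a well‑defined orientation‑preserving local isometry of this complex into $\C$ — well defined because $\mathscr D$ is simply connected, and unique up to post‑composition with an element of $\operatorname{Isom}^{+}(\C)$ once the image of one kite is fixed. The images of the white vertices together with the radii $r(v)$ then determine the circles ${\mathscr C}(v)$ (the intersection points around a fixed $c_v$ all lie at distance $r(v)$ from it), and the images of the kites the pattern ${\mathscr K}$; by construction this is an immersed circle pattern for $\mathscr D$ and $\alpha$ with radius function $r$, and the uniqueness assertion is the uniqueness of the developing map.

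I expect the most delicate point to be the first step combined with the phrase ``no cone point'' in the second: pinning down the intersection‑angle convention so that the triangle angle at $p_+$ is $\alpha$ (and not $\pi-\alpha$) and that the branch prescription in \eqref{defftheta} is exactly the one producing obtuse $\psi_1$ in the non‑convex case, and making precise why an immersed, coherently oriented kite pattern closes up around an interior centre after total angle exactly $2\pi$ rather than some multiple $2\pi k$ with $k>1$ — that is, that the base notion of ``immersed'' genuinely forbids cone‑like singularities. Everything else is bookkeeping: the trigonometric identity is a routine calculation, and the developing‑map argument is standard once flatness is established.
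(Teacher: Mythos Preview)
Your argument is correct and matches the paper's own treatment: the paper does not give a proof of this proposition but refers to~\cite{BS02}, noting only that $2f_{\alpha_{[v,v_0]}}(\log r(v)-\log r(v_0))$ is precisely the kite angle at $v_0$ and that~\eqref{eqFgen} is the closing condition for the chain of kites around $v_0$ --- exactly the geometric interpretation you establish in your first step. Your developing-map argument for the converse is the standard way to fill in the details, and your caveats about the angle convention and the exclusion of cone points are well placed.
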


Note that $2f_{\alpha_{[v,v_0]}}(\log r(v)-\log r(v_0))$ is the angle at $v_0$
of the kite
with edge lengths $r(v)$ and $r(v_0)$ and angle $\alpha_{[v,v_0]}$, as in
Figure~\ref{figKite}~(left). Equation~\eqref{eqFgen} is the closing
condition for the chain of kites corresponding to the edges incident
to $v_0$.

\begin{proof}
The statements are shown in a more general context in~\cite[Section~3]{BS02} 
in the case of finite circle patterns. As~\eqref{eqFgen} is only a local 
condition, it still holds for infinite patterns. 

Also, given any radius 
function for the simply connected b-quad-graph $\mathscr D$ and the admissible 
labelling $\alpha$, we can subsequently construct a corresponding circle 
pattern. For each quad with white vertices $v_1$ and $v_2$ build a kite as in 
Figure~\ref{figKite}~(left) with sides of lengths 
$r(v_1)$ and $r(v_2)$ and angle 
$\alpha_{[v_1,v_2]}$ opposite to the symmetry axis of the kite. Because
corresponding sides have the same length, these kites may be glued together to
form a cellular decomposition equivalent to $\mathscr D$. 
Condition~\eqref{eqFgen} and the admissible labelling assure that the 
resulting kite pattern can locally be embedded in the plane. Also, we obtain by 
construction a corresponding circle pattern if we add circles of radius $r(v)$ 
through all vertices of the kites which correspond to black vertices incident to 
$v$ in $\mathscr D$. 
In particular, we can start with any kite corresponding to a face $f$ of 
$\mathscr D$ and embed it into the plane. This fixes the rotational and 
translational freedom. Then we glue kites for faces adjacent to $f$ and 
successively proceed. Due to our assumptions on the labelling and to 
condition~\eqref{eqFgen}, the (finite) chain of kites corresponding to the 
edges incident to any fixed vertex will close up. Proceeding in this way for 
all faces of the simply connected b-quad-graph $\mathscr D$ finally leads 
to an immersed (maybe infinite) circle pattern. 
  
\end{proof}

For further use we mention some properties of $f_\theta$, see for
example~\cite{Spr03}.

\begin{lemma}\label{lemPropf}
  \begin{enumerate}[(1)]
    \item The derivative of $f_\theta$ is
    $ f_\theta'(x)=\frac{\sin\theta}{2(\cosh x -\cos \theta)}>0$.
    So $f_\theta$ is strictly increasing.
    \item The function $f_\theta$ satisfies the functional equation
      $f_\theta(x)+f_\theta(-x)=\pi-\theta$.
    \item For $0<y<\pi-\theta$ the inverse function of $f_\theta$ is
      $f_\theta^{-1}(y)=\log\frac{\sin y}{\sin(y+\theta)}$.
\item The integral of $f_\theta$ is 
\begin{equation*}
 F_\theta(x) = \int_{-\infty}^x f_\theta(\xi)d\xi = \Im \Li(e^{x+i\theta}),
\end{equation*}
where $\Li(z)= -\int_0^z\frac{\log(1-\zeta)}{\zeta}d\zeta$ is the 
{\em dilogarithm function}, see for example \cite{L81}.
\end{enumerate}
\end{lemma}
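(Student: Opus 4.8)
\textbf{Plan for proving Lemma~\ref{lemPropf}.}
The strategy is direct computation from the defining formula~\eqref{defftheta}, treating each of the four parts in order and reusing earlier parts where convenient. Throughout I keep in mind that the logarithm in~\eqref{defftheta} is the specific branch making $0<f_\theta(x)<\pi$, so all manipulations of $\log$ must be understood as statements about this branch (or verified up to the additive $2\pi i$ ambiguity and then pinned down by the normalization).

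For part~(1), I would differentiate $f_\theta(x)=\frac{1}{2i}\bigl(\log(1-e^{x-i\theta})-\log(1-e^{x+i\theta})\bigr)$ term by term. Each derivative has the form $\frac{d}{dx}\log(1-e^{x\mp i\theta})=\frac{-e^{x\mp i\theta}}{1-e^{x\mp i\theta}}$, and I would combine the two fractions over the common denominator $(1-e^{x-i\theta})(1-e^{x+i\theta})$. The denominator expands to $1-e^x(e^{i\theta}+e^{-i\theta})+e^{2x}=1-2e^x\cos\theta+e^{2x}$, and the numerator, after collecting the two $-e^{x\mp i\theta}$ terms with their cross factors, reduces to a pure imaginary multiple of $\sin\theta$. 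Dividing by $2i$ then yields $f_\theta'(x)=\frac{\sin\theta}{2(\cosh x-\cos\theta)}$ once I factor $e^x$ out of the denominator to recognize $\frac{1+e^{2x}}{2e^x}=\cosh x$. Positivity for $\theta\in(0,\pi)$ is immediate since $\sin\theta>0$ and $\cosh x\ge 1>\cos\theta$, giving strict monotonicity.

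For part~(2), I would compute $f_\theta(x)+f_\theta(-x)$ by adding the two logarithmic expressions and simplifying the product of arguments; alternatively, and more cleanly, I would note that $f_\theta'$ is even in $x$ (by part~(1)), so $g(x):=f_\theta(x)+f_\theta(-x)$ has $g'(x)=f_\theta'(x)-f_\theta'(-x)=0$, hence $g$ is constant. Its value follows from the limits $f_\theta(x)\to 0$ as $x\to-\infty$ and $f_\theta(x)\to\pi-\theta$ as $x\to+\infty$ (read off from~\eqref{defftheta}), giving $g\equiv\pi-\theta$. For part~(3), since $f_\theta$ is a strictly increasing bijection from $\R$ onto $(0,\pi-\theta)$ by parts~(1) and~(2), the inverse is well defined on $(0,\pi-\theta)$; I would verify the claimed formula by substituting $x=\log\frac{\sin y}{\sin(y+\theta)}$ back into~\eqref{defftheta} and checking that the result simplifies to $y$, expanding $e^{x\pm i\theta}$ in terms of $\sin y$, $\sin(y+\theta)$, and using the sine addition formula to collapse the logarithm.

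For part~(4), I would differentiate the claimed antiderivative and match it to $f_\theta$. Writing $\Li(z)=-\int_0^z\frac{\log(1-\zeta)}{\zeta}\,d\zeta$, so that $\Li'(z)=-\frac{\log(1-z)}{z}$, I set $z=e^{x+i\theta}$ and apply the chain rule: $\frac{d}{dx}\Im\,\Li(e^{x+i\theta})=\Im\bigl(\Li'(e^{x+i\theta})\cdot i e^{x+i\theta}\bigr)=\Im\bigl(-i\log(1-e^{x+i\theta})\bigr)=-\Re\log(1-e^{x+i\theta})+\ldots$, which I would rearrange and compare with $f_\theta(x)=\frac{1}{2i}\log\frac{1-e^{x-i\theta}}{1-e^{x+i\theta}}=\Im\log(1-e^{x-i\theta})$ after using that $1-e^{x-i\theta}$ and $1-e^{x+i\theta}$ are complex conjugates, so the quotient has modulus $1$ and $f_\theta$ equals the argument. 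The remaining check is that the lower limit $x\to-\infty$ gives $\Im\,\Li(0)=0$, matching $F_\theta(-\infty)=0$. The main obstacle throughout is branch bookkeeping for the complex logarithm and dilogarithm: I expect part~(4) to require the most care, since I must ensure $\Li$ is evaluated on the correct sheet along the path $\zeta\mapsto e^{x+i\theta}$ as $x$ ranges over $\R$ and confirm the imaginary-part identity holds without a spurious constant or $2\pi$ jump. I would resolve this by checking the identity at a single convenient value of $x$ (e.g.\ the symmetric point or a limit) to fix any additive constant.
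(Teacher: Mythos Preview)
The paper does not actually prove this lemma; it simply states the properties and refers the reader to~\cite{Spr03}. So there is no ``paper's own proof'' to compare against, and your direct-computation strategy is an entirely appropriate way to fill the gap. Parts~(1)--(3) are handled cleanly; the evenness-of-$f_\theta'$ argument for part~(2) is a particularly tidy shortcut.

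One small slip to flag in part~(4): in your chain-rule step you wrote $\frac{d}{dx}\Im\,\Li(e^{x+i\theta})=\Im\bigl(\Li'(e^{x+i\theta})\cdot i e^{x+i\theta}\bigr)$, but $\frac{d}{dx}e^{x+i\theta}=e^{x+i\theta}$ with no factor of $i$. Dropping that stray $i$ gives
\[
\frac{d}{dx}\Im\,\Li(e^{x+i\theta})=\Im\bigl(-\log(1-e^{x+i\theta})\bigr)=-\Im\log(1-e^{x+i\theta}),
\]
which matches exactly your own observation that $f_\theta(x)=-\arg(1-e^{x+i\theta})=-\Im\log(1-e^{x+i\theta})$ via the complex-conjugate symmetry. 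With that correction the antiderivative check goes through directly, and the boundary condition $\Im\,\Li(0)=0$ at $x\to-\infty$ fixes the constant as you said.
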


In analogy to smooth harmonic functions, the radius function of a planar
circle pattern satisfies a Dirichlet principle and a maximum principle.

\begin{theorem}[Dirichlet Principle]\label{theoDirichlet}
Let $G$ be a graph constructed from a finite simply connected 
b-quad-graph $\mathscr D$ and let $\alpha$ be an admissible labelling.
  
  Let $r:V_\partial(G)\to(0,\infty)$ be some positive function on the
  boundary vertices of $G$. Then $r$ can be extended to $V(G)$ in such
  a way that equation~\eqref{eqFgen} holds at every interior vertex $z\in
  V_{int}(G)$ if and only if
there exists any immersed circle pattern for $G$ and $\alpha$.
  If it exists, the extension is unique.
\end{theorem}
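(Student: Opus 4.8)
The plan is to realise equation~\eqref{eqFgen} as the Euler--Lagrange equation of a convex functional and then to apply the direct method of the calculus of variations. Write $u(v):=\log r(v)$. For a prescribed positive boundary function $\rho$ on $V_\partial(G)$ let $A(\rho)$ be the affine space of functions $u\colon V(G)\to\R$ with $u|_{V_\partial(G)}=\log\rho$, and consider on it
\begin{equation*}
  S(u) \ = \ \sum_{e=[v,w]\in E(G)} \bigl(F_{\alpha_e}(u(v)-u(w)) + F_{\alpha_e}(u(w)-u(v))\bigr) \ + \ \sum_{v\in V_{\mathrm{int}}(G)}\Bigl(2\pi-\sum_{e\ni v}(\pi-\alpha_e)\Bigr)u(v),
\end{equation*}
where $F_\theta$ is the integral of $f_\theta$ from Lemma~\ref{lemPropf}(4); since $\theta=\alpha_e\in(0,\pi)$ and $F_\theta(x)=\Im\Li(e^{x+i\theta})$ meets no branch point, $S$ is finite and smooth on $\R^{V(G)}$. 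Using the functional equation $f_\theta(x)+f_\theta(-x)=\pi-\theta$ of Lemma~\ref{lemPropf}(2), a short computation gives for every interior vertex $v_0$
\begin{equation*}
  \frac{\partial S}{\partial u(v_0)} \ = \ 2\pi \ - \ 2\sum_{[v,v_0]\in E(G)} f_{\alpha_{[v,v_0]}}\bigl(u(v)-u(v_0)\bigr),
\end{equation*}
so that $u\in A(\rho)$ is a critical point of $S|_{A(\rho)}$ precisely when $r=e^{u}$ extends $\rho$ and satisfies~\eqref{eqFgen} at every interior vertex.

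Next I would record convexity. Since $F_\theta''=f_\theta'=\tfrac{\sin\theta}{2(\cosh x-\cos\theta)}>0$ by Lemma~\ref{lemPropf}(1), each edge term is convex in $(u(v),u(w))$ and strictly convex in the direction $u(v)-u(w)$, while the vertex terms are linear; hence $S$ is convex on $\R^{V(G)}$, and on the tangent space $W:=\{w : w|_{V_\partial(G)}=0\}$ of $A(\rho)$ it is \emph{strictly} convex, because the second derivative of $t\mapsto S(u+tw)$ is a sum over the edges $[v,v']\in E(G)$ of strictly positive multiples of $\bigl(w(v)-w(v')\bigr)^2$, which vanishes only if $w$ is constant on every edge, i.e.\ (as $G$ is connected and $V_\partial(G)\neq\emptyset$) only if $w\equiv0$. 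In particular $S|_{A(\rho)}$ has at most one critical point, which already gives the uniqueness of the extension. Moreover, if such an extension exists then Proposition~\ref{propRadius} (its converse part, where simple connectedness of $\mathscr D$ enters) produces an immersed circle pattern for $G$ and $\alpha$ -- which settles one of the two implications.

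For the remaining implication, suppose some immersed circle pattern for $G$ and $\alpha$ exists, with radius function $r_0$. By Proposition~\ref{propRadius} its logarithm $u_0$ satisfies~\eqref{eqFgen} at all interior vertices, so $u_0$ is a critical point -- hence, by convexity, a global minimiser -- of $S$ on $A_0:=A(r_0)$. For a fixed $w\in W\setminus\{0\}$ the strictly convex function $t\mapsto S(u_0+tw)$ then has vanishing derivative at $t=0$, so its slope is strictly increasing from $0$ and its recession slope $S_\infty(w):=\lim_{t\to\infty}t^{-1}\bigl(S(u_0+tw)-S(u_0)\bigr)$ is \emph{strictly positive}. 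Since $S$ is convex and finite everywhere, $S_\infty$ does not depend on the base point; hence for an arbitrary prescribed boundary function $\rho$ the restriction $S|_{A(\rho)}$, which has the same direction space $W$, has the same strictly positive recession function and is therefore coercive. A coercive convex $C^1$ function on a finite-dimensional affine space attains its minimum; this minimiser is the desired critical point, so $r=e^{u}$ extends $\rho$ and solves~\eqref{eqFgen} in the interior (and, by Proposition~\ref{propRadius}, even realises a circle pattern with these boundary radii).

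The main obstacle is precisely this last step: upgrading ``$S$ has one minimiser'' -- which a priori only yields $S_\infty\ge0$ on $W$ -- to ``$S_\infty>0$ on $W$'', which is what coercivity for \emph{every} boundary function requires; this is where the strict convexity of the edge terms $F_{\alpha_e}(x)+F_{\alpha_e}(-x)$ together with the connectedness of $G$ is essential. Routine points to dispatch in passing are the everywhere-finiteness and smoothness of $S$ (noted above) and the fact that a finite simply connected b-quad-graph carries white boundary vertices, so that $V_\partial(G)\neq\emptyset$ and $W$ contains no constant direction.
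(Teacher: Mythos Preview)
Your proof is correct and follows precisely the variational approach that the paper itself employs: the paper does not prove Theorem~\ref{theoDirichlet} directly but refers to~\cite{Bue08}, and the functional you write down is the Euclidean analogue of the Bobenko--Springborn functional $S_{\text{hyp}}$ that the paper spells out in Proposition~\ref{propShyp}(iii) for the hyperbolic case. Your gradient computation, the strict convexity on $W$ via connectedness of $G$ and nontriviality of $V_\partial(G)$, and the resulting uniqueness are exactly the standard arguments.

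The one place where your write-up adds a small twist is the existence step: rather than analysing the asymptotics of $F_{\alpha_e}$ directly to check coercivity (which would amount to verifying the local combinatorial inequality $\sum_{e\ni v_0}(\pi-\alpha_e)>\pi$ at each interior vertex, a consequence of the existence hypothesis), you use the given circle pattern as a critical point to force $S_\infty(w)>0$ on $W\setminus\{0\}$ and then invoke base-point independence of the recession function. This is a clean and legitimate shortcut; it packages the same information (the existence hypothesis is exactly what makes $S|_{A(\rho)}$ coercive for \emph{every} $\rho$) without an explicit case analysis. The argument that strict convexity plus $\phi'(0)=0$ forces $\lim_{t\to\infty}\phi'(t)\ge\phi'(1)>0$ is correct, and lower semicontinuity of the recession function on the unit sphere of the finite-dimensional $W$ then yields a uniform positive lower bound, hence coercivity.
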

See for example~\cite[Theorem~2.6]{Bue08} for a proof.

\begin{lemma}[Maximum Principle]\label{lemMaxEuc}
Let $G$ be a graph constructed from a b-quad-graph $\mathscr D$ and let 
$\alpha$ 
be an admissible labelling.
      Suppose ${\mathscr C}$ and $\widetilde{\mathscr C}$ are two immersed 
circle
      patterns for $G$ and $\alpha$ with radius functions
      $r$ and $\tilde r$ resp.
      Then the maximum (or minimum) of the quotient $r/ \tilde r$ is
attained at a boundary vertex.
\end{lemma}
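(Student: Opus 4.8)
The plan is to translate the statement into a discrete maximum principle for the additive potential $h:=\log r-\log\tilde r$ on the finite vertex set $V(G)$, using the strict monotonicity of $f_\theta$ from Lemma~\ref{lemPropf}(1) together with the closing condition~\eqref{eqFgen}, which by Proposition~\ref{propRadius} holds at every interior vertex for \emph{both} immersed patterns. Since $r/\tilde r=e^{h}$, it suffices to show that $h$ attains its maximum (and, applying the same reasoning to $\tilde r/r$, its minimum) at a vertex of $V_\partial(G)$; note that $V_\partial(G)\neq\emptyset$ because $G$ is the white-vertex graph of a finite cell decomposition of a domain in $\C$.

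First I would assume that the maximal value $M$ of $h$ is attained at some interior vertex $v_0\in V_{\mathrm{int}}(G)$. For every neighbor $v$ of $v_0$ we then have $h(v)\le h(v_0)$, i.e.
\begin{equation*}
 \log r(v)-\log r(v_0)\ \le\ \log\tilde r(v)-\log\tilde r(v_0).
\end{equation*}
Because each $f_{\alpha_{[v,v_0]}}$ is increasing, applying it on both sides and summing over the edges incident to $v_0$ gives
\begin{equation*}
 \pi=\sum_{[v,v_0]\in E(G)} f_{\alpha_{[v,v_0]}}\!\left(\log r(v)-\log r(v_0)\right)\ \le\ \sum_{[v,v_0]\in E(G)} f_{\alpha_{[v,v_0]}}\!\left(\log\tilde r(v)-\log\tilde r(v_0)\right)=\pi ,
\end{equation*}
where both outer equalities are instances of~\eqref{eqFgen}. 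Hence every summand is an equality, and the strict monotonicity of $f_\theta$ forces $\log r(v)-\log r(v_0)=\log\tilde r(v)-\log\tilde r(v_0)$, that is $h(v)=h(v_0)=M$, for every neighbor $v$ of $v_0$.

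Next I would propagate this: the set $S:=\{v\in V(G):h(v)=M\}$ is nonempty, and the previous step shows that whenever $S$ contains an interior vertex it contains all of its $G$-neighbors. Since $G$ is connected (consecutive faces of a b-quad-graph share an edge, hence a white vertex, so any two white vertices are joined by a path in $G$), it follows that either $S$ already meets $V_\partial(G)$, or $S$ contains an interior vertex and then, adjoining neighbors along paths in $G$, $S=V(G)\supseteq V_\partial(G)$. In either case the maximum of $r/\tilde r$ is attained at a boundary vertex, and the minimum statement follows by exchanging the roles of $r$ and $\tilde r$.

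The computation is short; the two points that need care are (i) that~\eqref{eqFgen} is genuinely available at every interior vertex of an \emph{immersed} (not necessarily embedded) pattern — this is exactly what Proposition~\ref{propRadius} provides — and that the sum in~\eqref{eqFgen} runs over \emph{all} edges incident to the interior vertex $v_0$, which holds because such a vertex carries a full cyclic chain of incident kites; and (ii) the combinatorial propagation step, which uses connectedness of $G$. There is no analytic obstacle here: the strict positivity $f_\theta'(x)=\sin\theta/\bigl(2(\cosh x-\cos\theta)\bigr)>0$ plays the role that, in the smooth theory, is played by the strong maximum principle for the Laplacian.
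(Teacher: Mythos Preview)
Your argument is correct and is precisely the classical proof: the paper does not give its own proof of Lemma~\ref{lemMaxEuc} but refers to \cite[Lemma~2.1]{He99}, where exactly this monotonicity-and-propagation argument is carried out. (The paper later remarks, after Lemma~\ref{lemsubharm}, that the subharmonicity computation there yields an alternative proof under the extra hypothesis of convex kites; your proof needs no such restriction.)
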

A proof can be found in~\cite[Lemma~2.1]{He99}.
If there exists an
isoradial planar circle pattern for $G$ and $\alpha$, the usual maximum
principle for the radius function follows by taking $\tilde r\equiv 1$.

\section{Maximum principles and rigidity of circle patterns 
in the hyperbolic plane}\label{SecRigidity}

For a circle pattern $\mathscr C$ in the unit disc $\D$ for the graph $G$ 
denote 
by $r_{\text{hyp}}:V(G)\to\R$ the function which assigns to each vertex the
{\em hyperbolic radius} of the corresponding circle.

\begin{lemma}[Maximum Principle in the hyperbolic plane]\label{lemMaxHyp}
Let $G$ be a graph constructed from a b-quad-graph $\mathscr D$ and let 
$\alpha$ 
be an admissible labelling.
Let ${\mathscr C}$ and ${\mathscr C}^*$ be two circle patterns for
$G$ and $\alpha$ which are contained in $\D$. 

 If the inequality $r_{\text{hyp}}^*(v)\geq
r_{\text{hyp}}(v)$ is satisfied for each boundary vertex, then it holds for 
all vertices of $G$.
\end{lemma}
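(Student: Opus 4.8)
The plan is to reduce the hyperbolic maximum principle to the Euclidean one (Lemma~\ref{lemMaxEuc}) by comparing the local closing condition~\eqref{eqFgen} at an interior vertex with a hyperbolic analogue. First I would recall the relation between the Euclidean radius $r$ of a circle in $\D$ and its hyperbolic radius $r_{\text{hyp}}$: if the circle has hyperbolic center at hyperbolic distance $d$ from the origin, then $r$ and $r_{\text{hyp}}$ are linked by the formula $r = \tanh(r_{\text{hyp}}/2)\cdot(\text{something involving }d)$ — more precisely, a Euclidean circle in $\D$ is also a hyperbolic circle, and the two radii differ by a factor depending on the position. The key observation I would use instead is the \emph{local} one: for a kite built from two intersecting circles with hyperbolic radii $\rho_1,\rho_2$ and exterior intersection angle $\alpha$, the angle of the kite at the first center is $2\tilde f_\alpha(\rho_1,\rho_2)$ for an appropriate function $\tilde f$, and the closing condition at an interior vertex $v_0$ reads
\begin{equation*}
\sum_{[v,v_0]\in E(G)} \tilde f_{\alpha_{[v,v_0]}}(r_{\text{hyp}}(v_0),r_{\text{hyp}}(v)) = \pi.
\end{equation*}
Here $\tilde f_\theta(\rho_0,\rho)$ should be increasing in $\rho$ and decreasing in $\rho_0$ (the analogue of the monotonicity in Lemma~\ref{lemPropf}(1)).

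The heart of the argument is then a comparison/averaging step exactly as in the proof of Lemma~\ref{lemMaxEuc} (He, \cite[Lemma~2.1]{He99}). Suppose, for contradiction, that the set $W$ of vertices where $r_{\text{hyp}}^* - r_{\text{hyp}}$ attains its minimum value $m$ contains an interior vertex but, by hypothesis, $W$ contains no boundary vertex; take $v_0\in W$ interior. At $v_0$ we have $r_{\text{hyp}}^*(v_0) = r_{\text{hyp}}(v_0)+m$ and $r_{\text{hyp}}^*(v)\ge r_{\text{hyp}}(v)+m$ for all neighbors $v$, with strict inequality unless $v\in W$. Plugging into the closing condition for ${\mathscr C}^*$ and using monotonicity of $\tilde f_\theta$, I would derive that
\begin{equation*}
\pi = \sum_{[v,v_0]} \tilde f_{\alpha_{[v,v_0]}}(r_{\text{hyp}}(v_0)+m, r_{\text{hyp}}^*(v)) \ge \sum_{[v,v_0]} \tilde f_{\alpha_{[v,v_0]}}(r_{\text{hyp}}(v_0)+m, r_{\text{hyp}}(v)+m),
\end{equation*}
so it suffices to know that the right-hand side is $\ge \pi$ — equivalently, that shifting both radii of a kite by the same hyperbolic amount $m>0$ does not decrease (in fact increases, or at least is controlled by) the kite angle sum. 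If I can show the map $m\mapsto \sum_{[v,v_0]}\tilde f_{\alpha_{[v,v_0]}}(\rho_0+m,\rho_v+m)$ is nondecreasing, I get a contradiction (the inequalities above force all neighbor inequalities to be equalities, hence all neighbors lie in $W$, and propagating along the connected finite graph forces a boundary vertex into $W$).

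The main obstacle is precisely establishing this monotonicity-under-common-shift property of the hyperbolic kite angle function, which is the substitute for the scale-invariance used in the Euclidean case (where shifting $\log r$ by a constant does not change the kite angles at all). I expect one can prove it by an explicit computation of $\tilde f_\theta$ in terms of hyperbolic trigonometry of the kite — writing the half-angle at a center as a function of the two hyperbolic radii and $\theta$, differentiating with respect to the common shift parameter, and checking the sign — or, more conceptually, by the geometric fact that enlarging two touching/intersecting hyperbolic circles while keeping their intersection angle fixed opens up the angle they subtend at each other's centers, because hyperbolic circles of larger radius are ``flatter.'' Once this lemma is in hand, the reduction above closes the proof; alternatively, one may avoid isolating the lemma and instead invoke Lemma~\ref{lemMaxEuc} directly after a suitable change of variables, but the common-shift estimate seems to me the cleanest route and the step most in need of genuine work.
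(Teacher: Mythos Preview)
Your strategy is in the right neighborhood, but the central monotonicity claim is stated with the wrong sign, and your geometric heuristic is backwards. In hyperbolic geometry the angle sum of a triangle decreases as the triangle grows (Gauss--Bonnet), so enlarging both intersecting circles while keeping the intersection angle fixed \emph{closes} the angle at each center, it does not open it. Concretely, in the variable $\rho=\log\tanh(r_{\text{hyp}}/2)$ the half-kite angle is $\varphi_e(\rho_0,\rho_1,\theta)=f_\theta(\rho_1-\rho_0)-f_\theta(\rho_1+\rho_0)$, so under a common additive shift $\rho_j\mapsto\rho_j+s$ the first term is invariant and the second term moves with $f_\theta'>0$; hence $s\mapsto\sum\varphi_e(\rho_0+s,\rho_v+s)$ is strictly \emph{decreasing}. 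With the correct sign your argument actually closes more cleanly than you suggest: if $m=\min(\rho^*-\rho)<0$ is attained at an interior vertex $v_0$, then monotonicity in the neighbor variable gives $\pi=\sum\varphi_e(\rho^*(v_0),\rho^*(v))\ge\sum\varphi_e(\rho(v_0)+m,\rho(v)+m)$, while the common-shift strict decrease gives $\sum\varphi_e(\rho(v_0)+m,\rho(v)+m)>\sum\varphi_e(\rho(v_0),\rho(v))=\pi$, an immediate contradiction with no propagation needed. Note also that you should compare in the $\rho$-variable rather than in $r_{\text{hyp}}$; an additive shift in $r_{\text{hyp}}$ is nonlinear in $\rho$ and the analysis becomes opaque.

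The paper's proof uses the same underlying monotonicity but packages it differently: circle patterns are the unique minima of the strictly convex functional $S_{\text{hyp}}$, and one checks that on each face $\{u_i=\rho^*(v_i)\}$ of the box $U=\{u:u_j\le\rho^*(v_j)\}$ the $i$-th partial derivative of $S_{\text{hyp}}(\cdot,\rho|_{V_\partial})$ is nonnegative (this is exactly the inequality $\partial\varphi_e/\partial u_k>0$), so the gradient points out of $U$ and the minimum---which is $\rho$---lies in $U$. The variational formulation buys a direct extension to the generalized setting of Lemma~\ref{lemMaxHypGen}, where some boundary circles meet $\partial\D$; your direct comparison argument would need a separate treatment of those boundary terms.
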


Our proof is based on a variational principle for circle patterns by Springborn 
and Bobenko, see~\cite{BS02,Spr03}. We will briefly present some useful 
facts. 

First we introduce some notation.
For a hyperbolic circle pattern for $G$ denote by $c_{\text{hyp}}:V(G)\to\C$ 
the function assigning the hyperbolic center of circle to the vertex of the 
corresponding circle. Also, instead of the hyperbolic radius function 
$r_{\text{hyp}}$ we will often consider the function  
\begin{equation}
 \rho:V(G)\to\R, \qquad \rho(v)=\log\tanh\frac{r_{\text{hyp}}(v)}{2}.
\end{equation}
Note that positive hyperbolic radii $r_{\text{hyp}}(v)$ correspond to 
negative $\rho(v)$.

Consider two intersecting circles in $\D$ corresponding to the edge 
$e=[v_0,v_1]$. Let $\alpha_e= \theta \in(0,\pi)$ be
the exterior intersection angle of the two circles ${\mathscr C}(v_0)$ and 
${\mathscr C}(v_1)$. 
Consider the hyperbolic triangle formed by the two centers 
$c_{\text{hyp}}(v_0)$ and $c_{\text{hyp}}(v_1)$ and one of the 
intersection points of the two circles, see 
Figure~\ref{figsPhi}\subref{figPhi} for a schematic sketch. Then 
the angle at $c_{\text{hyp}}(v_0)$ in this triangle is 
\begin{equation}
 \varphi_{e}(\rho(v_0),\rho(v_1),\theta)= 
f_\theta(\rho(v_1)-\rho(v_0))- 
f_\theta(\rho(v_1)+\rho(v_0)),
\end{equation}
where the function $f_\theta:\R\to\R$ is defined in~\eqref{defftheta}.
\begin{figure}[t]
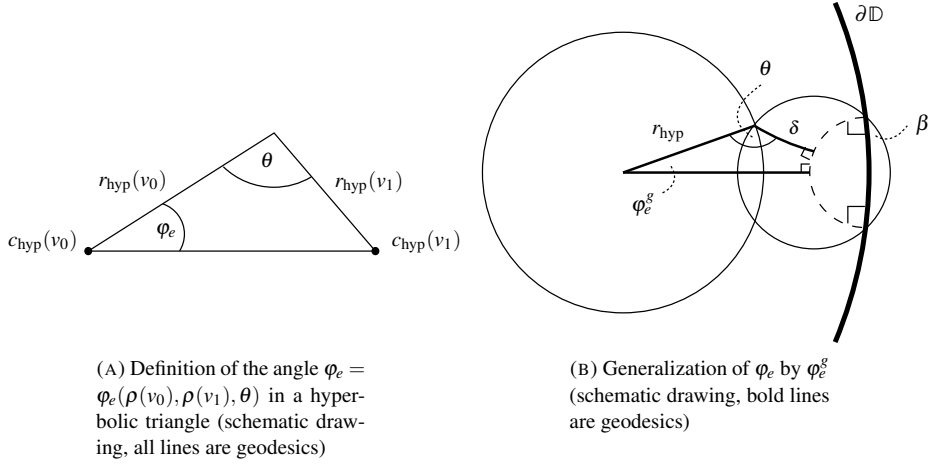

\subfloat[Definition of the angle 
$\varphi_{e}= \varphi_{e}(\rho(v_0),\rho(v_1),\theta)$ in a
hyperbolic triangle (schematic drawing, all lines are geodesics)]{\label{figPhi}
\hspace{-0.2cm}
\raisebox{1.2cm}{ \input{DefPhi.pspdftex}}
\hspace{0.7cm}
}
\hspace{0.1cm}
\subfloat[Generalization of $\varphi_{e}$ by 
$\varphi_{e}^g$ (schematic drawing, bold lines are geodesics)]{\label{figDefPhi}
\input{GenPhi.pspdftex}
}
\vspace{-1.5ex}
\caption{Definition of the angle functions for (generalized) hyperbolic circle 
patterns.}\label{figsPhi}
\end{figure}

In a planar (hyperbolic) circle pattern these angles $\varphi_{e}$ add up 
to $\pi$ around an interior vertex $v_0$. Therefore we have analogously as in 
Proposition~\ref{propRadius} the following characterization of hyperbolic 
circle patterns.

\begin{proposition}[\cite{BS02,Spr03}]\label{propShyp}
 Let $G$ be a graph constructed from a b-quad-graph $\mathscr
      D$ and let $\alpha$ be an admissible labelling. 
\begin{enumerate}[(i)]
 \item Suppose that ${\mathscr C}$ is a planar hyperbolic circle
      pattern for ${\mathscr D}$ and $\alpha$ with hyperbolic radius function
      $r=r_{\text{hyp}}$ and $\rho= \log\tanh\frac{r_{\text{hyp}}}{2}$. Then 
for      every interior vertex $v_0\in V_{int}(G)$ we have
\begin{equation}\label{eqhyprho}
 2\pi -2\sum_{[v_0,v]\in E(G)} (f_{\alpha_{[v_0,v]}}(\rho(v)-\rho(v_0))- 
f_{\alpha_{[v_0,v]}}(\rho(v)+\rho(v_0))) =0.
\end{equation}
\item Conversely, suppose that $\mathscr D$ is simply connected and assume
      that the variables $\rho= \log\tanh\frac{r}{2}\in (-\infty, 0)$ for a 
hyperbolic radius 
function $r:V(G)\to(0,\infty)$ satisfy~\eqref{eqFgen} for every $v_0\in  
V_{int}(G)$. Then there is a planar hyperbolic circle pattern for $G$ and 
$\alpha$ whose hyperbolic radius function coincides with $r$.
This pattern is unique up to isometries of $\D$.
\item For given boundary values $\rho: V_\partial(G)\to (-\infty, 0)$ there 
exists 
a unique solution $\rho:V_{\text{int}}(G)\to\R$ of equations~\eqref{eqhyprho} 
which is 
the minimizer of the strictly convex functional $S_{\text{hyp}}: 
\R^{N}\to\R$, where $N=|V_{int}(G)|$ and
\begin{multline} \label{eqDefShyp}
 S_{\text{hyp}}(\rho) = \sum_{e=[v_l,v_r]} \left(
\Im \Li(e^{\rho(v_l)-\rho(v_r)+i\alpha_e}) + 
\Im\Li(e^{\rho(v_r)-\rho(v_l)+i\alpha_e}) \right. \\
\shoveright{\left.+\Im\Li(e^{\rho(v_l)+\rho(v_r)+i\alpha_e}) +
\Im\Li(e^{-\rho(v_l)-\rho(v_r)+i\alpha_e})\right)}\\
+ \sum_{v\in V_{int}(G)} 2\pi\rho(v).
\end{multline}
The first sum is taken over all edges $e\in E(G)$.
\end{enumerate}
\end{proposition}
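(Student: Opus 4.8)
The plan is to deduce all three parts from the variational principle, i.e.\ from analysing the functional $S_{\text{hyp}}$ in part (iii), since (i) and (ii) are standard consequences once the right convex functional is in hand. The key observation is that the $e$-summand of $S_{\text{hyp}}$ is, up to the term $2\pi\rho(v)$, built from the dilogarithm primitives $F_\theta$ of Lemma~\ref{lemPropf}(4): writing $F_\theta(x)=\Im\,\mathrm{Li}_2(e^{x+i\theta})$ we have $F_\theta'=f_\theta$, so that differentiating the $e$-summand $F_{\alpha_e}(\rho(v_l)-\rho(v_r))+F_{\alpha_e}(\rho(v_r)-\rho(v_l))+F_{\alpha_e}(\rho(v_l)+\rho(v_r))+F_{\alpha_e}(-\rho(v_l)-\rho(v_r))$ with respect to a variable $\rho(v_0)$ (with $v_0$ an interior vertex, $v_0\in\{v_l,v_r\}$ on that edge) produces exactly $-2\bigl(f_{\alpha_e}(\rho(v)-\rho(v_0))-f_{\alpha_e}(\rho(v)+\rho(v_0))\bigr)$, where $v$ is the other endpoint. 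Summing over the edges at $v_0$ and adding the derivative $2\pi$ of the last term of $S_{\text{hyp}}$ shows that $\partial S_{\text{hyp}}/\partial\rho(v_0)$ is precisely the left-hand side of~\eqref{eqhyprho}. Hence critical points of $S_{\text{hyp}}$ (over the interior variables, with boundary $\rho$ fixed) are exactly solutions of~\eqref{eqhyprho}. This simultaneously gives part (i) (the closing condition $\sum\varphi_e=\pi$ around each interior vertex, which for a genuine planar hyperbolic pattern holds by construction of the chain of kite-triangles around $v_0$) and reduces the existence/uniqueness claims to convexity and coercivity of $S_{\text{hyp}}$.

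For part (iii) I would first check strict convexity of $S_{\text{hyp}}$. The Hessian decomposes as a sum of $2\times 2$ (and $1\times 1$, for boundary-adjacent edges after restriction) blocks coming from the individual edges, plus the linear term contributes nothing. Using $f_\theta'(x)=\tfrac{\sin\theta}{2(\cosh x-\cos\theta)}>0$ from Lemma~\ref{lemPropf}(1), each edge block has the form $f'_{\alpha_e}(\rho(v_l)-\rho(v_r))\begin{psmallmatrix}1&-1\\-1&1\end{psmallmatrix}+f'_{\alpha_e}(\rho(v_l)+\rho(v_r))\begin{psmallmatrix}1&1\\1&1\end{psmallmatrix}$ — a sum of a positive semidefinite and a positive semidefinite matrix whose nullspaces intersect only in $0$, hence positive definite; summing over a connected graph with at least one fixed boundary value yields strict convexity on $\R^N$. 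Next, existence of the minimiser: I would argue that $S_{\text{hyp}}$ is proper (coercive) on the relevant domain by tracking the asymptotics of $F_\theta(x)=\Im\,\mathrm{Li}_2(e^{x+i\theta})$ as $x\to\pm\infty$ — it grows like a linear function of $x$ for $x\to+\infty$ and tends to $0$ for $x\to-\infty$ — so that when any $\rho(v_0)\to-\infty$ (circle radius $\to\infty$) or $\rho(v_0)\to 0^-$ (radius $\to 0$) the term $2\pi\rho(v_0)$ together with the edge terms forces $S_{\text{hyp}}\to+\infty$; this is the step where the precise constants and the sign $2\pi>0$ matter. A strictly convex proper function on $\R^N$ has a unique critical point, which is the global minimiser, proving (iii), and with it the uniqueness statements in (i) and (ii).

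Finally, for part (ii): given $\rho=\log\tanh(r/2)$ satisfying~\eqref{eqhyprho} at every interior vertex (the statement's reference to \eqref{eqFgen} should read \eqref{eqhyprho}), I would construct the pattern by the standard ``lay out kites one at a time'' argument. Pick a seed white vertex, place its circle anywhere in $\D$; for each incident edge the angle $\varphi_e$ at the center is determined by the two hyperbolic radii and $\alpha_e$ via the hyperbolic cosine/sine rules (the triangle in Figure~\ref{figsPhi}\subref{figPhi}), and the closing condition~\eqref{eqhyprho} guarantees these angles sum to $\pi$ (equivalently $2\pi$ over the full chain of kites) around each interior vertex, so the development is consistent; since $\mathscr D$ is simply connected there is no monodromy obstruction, and we obtain a well-defined planar hyperbolic circle pattern with the prescribed hyperbolic radius function. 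Uniqueness up to isometries of $\D$ follows because the seed circle and an initial direction are the only free data, and two choices differ by a hyperbolic isometry. The main obstacle I anticipate is the coercivity argument in (iii): getting the asymptotics of $\Im\,\mathrm{Li}_2$ precise enough — in all the mixed regimes where some $\rho(v)$ go to $-\infty$ and others stay bounded or approach $0$ — to conclude that the sublevel sets of $S_{\text{hyp}}$ are compact; everything else is either bookkeeping with $f_\theta$ or a routine developing-map argument.
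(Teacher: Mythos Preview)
The paper does not actually prove this proposition; it is quoted from \cite{BS02,Spr03}, and the only thing the paper verifies explicitly is the gradient identity
\[
\frac{\partial S_{\text{hyp}}}{\partial \rho(v_0)} = 2\pi - 2\sum_{[v_0,v]\in E(G)}\bigl(f_{\alpha_{[v_0,v]}}(\rho(v)-\rho(v_0)) - f_{\alpha_{[v_0,v]}}(\rho(v)+\rho(v_0))\bigr),
\]
which is precisely the computation you carry out. Your overall strategy --- derive \eqref{eqhyprho} as the Euler--Lagrange equation of $S_{\text{hyp}}$, prove strict convexity via the edge-by-edge Hessian decomposition, argue coercivity for existence, and reconstruct the pattern by developing the chain of hyperbolic kite-triangles --- is exactly the Bobenko--Springborn approach, so your proposal matches the intended (cited) proof.

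Two small corrections. First, you have the correspondence between $\rho$ and $r_{\text{hyp}}$ reversed: since $\rho=\log\tanh(r_{\text{hyp}}/2)$, one has $\rho\to -\infty$ as $r_{\text{hyp}}\to 0$ and $\rho\to 0^-$ as $r_{\text{hyp}}\to\infty$, not the other way round. Second, your edge Hessian block is off by an overall factor of $2$ (harmless for positivity). Your strict-convexity argument is otherwise correct: the two rank-one pieces have kernels $\operatorname{span}(1,1)$ and $\operatorname{span}(1,-1)$, so each edge block is positive definite, and since every interior vertex lies on at least one edge the full Hessian is positive definite on $\R^N$.

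You are right to flag coercivity as the delicate point. When a single $\rho(v_0)\to -\infty$ with the others fixed, the edge contributions grow like $2\sum_{e\ni v_0}(\pi-\alpha_e)\,|\rho(v_0)|$ while the linear term contributes $2\pi\rho(v_0)$, so one needs $\sum_{e\ni v_0}(\pi-\alpha_e)>\pi$ at each interior white vertex to conclude $S_{\text{hyp}}\to+\infty$; the mixed regimes require a slightly more careful bookkeeping along these lines. This is handled in \cite{BS02,Spr03}, and your identification of it as the main technical step is accurate.
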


Note in particular that for all $v_0\in V_{int}(G)$ and $\rho_0= 
\rho(v_0)$ we have
\[\frac{\partial S_{\text{hyp}}(\rho)}{\partial \rho_0} = 2\pi 
-2\sum_{[v_0,v]\in E(G)} (f_{\alpha_{[v_0,v]}}(\rho(v)-\rho(v_0))- 
f_{\alpha_{[v_0,v]}}(\rho(v)+\rho(v_0))).\]

\begin{remark}
 The statements of the preceding proposition can be generalized to hyperbolic 
circle patterns with cone singularities at the vertices, see~\cite{BS02,Spr03}.
\end{remark}

Now we are ready to prove the Maximum Principle in the hyperbolic plane.

\begin{proof}[of Lemma~\ref{lemMaxHyp}.]
 First note that $\rho(r)=\log\tanh\frac{r}{2}$ is strictly 
monotonically increasing in (the hyperbolic radius) $r$. 

By Proposition~\ref{propShyp}, circle patterns in $\D$ correspond uniquely to 
critical points of the functional $S_{\text{hyp}}$. Furthermore, 
$S_{\text{hyp}}$ is strictly convex and its unique critical point is a minimum.

Let $v_1,\dots,v_n$ be a numbering of the interior vertices of $G$ 
where $n=|V_{int}(G)|$. Let $\rho, \rho^*:V(G)\to 
(-\infty,0)$ correspond to the radii as above.
Consider in $\R^n$ the $n$-dimensional closed interval $U=\{u\in\R^n\ |\ 
u_i\leq \rho^*(v_i)\}$. 
We will show that for fixed boundary values $\rho|_{\partial V}$ the 
gradient of $S_{\text{hyp}}(\cdot, \rho|_{\partial V}):\R^n\to\R$ is pointing 
into the complement of $U$ on the boundary $\partial U$ or is contained in 
$\partial U$ (or more precisely in one of the hyperplanes $H_i=\{u\in\R^n\ |\ 
u_i= \rho^*(v_i)\}$).

For $u_i=\rho^*(v_i)$ the $i$th component of the gradient of $S_{\text{hyp}}$ 
is 
\begin{multline*}
 \frac{\partial S_{\text{hyp}}(\rho)}{\partial u_i}(u_1,\dots, 
u_i=\rho^*(v_i),\dots u_n) \\
= 2\pi -2\sum_{[v_i,v_k]\in E(G)} (f_{\alpha_{[v_i,v_k]}}(u_k-\rho^*(v_i))- 
f_{\alpha_{[v_i,v_k]}}(u_k+\rho(v_i))) \\
\geq 2\pi 
-2\sum_{[v_i,v_k]\in E(G)} (f_{\alpha_{[v_i,v_k]}}(\rho^*(v_k)-\rho^*(v_i))- 
f_{\alpha_{[v_i,v_k]}}(\rho^*(v_k)+\rho(v_i))) \\
\qquad =\frac{\partial S_{\text{hyp}}(\rho)}{\partial u_i} (\rho^*(v_1),\dots, 
\rho^*(v_n))=0.
\end{multline*}
The sum is taken over all edges $[v_i,v_k]\in E(G)$ incident to $v_i$.
The inequality follows from the fact that 
\begin{align*}
\frac{\partial}{\partial u_k} \varphi_{e}(u_i,u_k,\theta) &= 
\frac{\partial}{\partial u_k}(f_\theta(u_k-u_i)- f_\theta(u_k+u_i))\\
&=\frac{\sin\theta}{2(\cosh(u_k-u_i)-\cos\theta)} - 
\frac{\sin\theta}{2(\cosh(u_k+u_i)-\cos\theta)} >0.
\end{align*}
Furthermore the inequality is strict if at least for one edge $[v_i,v_k]\in 
E(G)$ we have a strict inequality $u_k<\rho^*(v_k)$.
Therefore the minimum of $S_{\text{hyp}}$ with boundary conditions 
$\rho|_{\partial V}$ in contained in $U$.
  \end{proof}

Similarly as He in~\cite{He99} we now generalize this maximum principle to 
circle patterns which only intersect the unit disc $\D$. For this reason, we 
generalize the hyperbolic functional $S_{\text{hyp}}$ for circles intersecting 
the boundary $\partial \D$.

\begin{remark}
 In the Poincar\'e disc model of the hyperbolic geometry the geodesics are the 
intersections of circles orthogonal to $\partial \D$ with $\D$.
Consider a circle which intersects $\partial \D$ in two points with 
exterior intersection angle $\beta$. Then all points on the corresponding 
circular arc in $\D$ have the same 
distance to one fixed geodesic which intersects $\partial \D$ in the same two 
points, see Figure~\ref{figCircLine}. This distance $\delta$ satisfies
\begin{align}
 i\beta = \log \tanh\left(\frac{\delta}{2}+i\frac{\pi}{4}\right).
\end{align}
These curves are also called {\em hypercycles} and may be 
interpreted as generalization of circles where all points have the same 
distance to one fixed point (which is the center).
\begin{figure}[htb]
\centering{\input{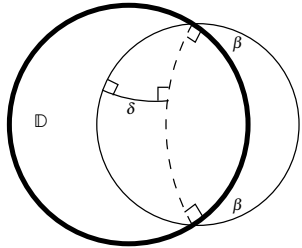}}
 \caption{A circular arc corresponding to a hypercycle in the Poincar\'e disc 
model with exterior intersection angle $\beta$ and distance $\delta$ to the 
corresponding geodesic (dashed line).}\label{figCircLine}
\end{figure}
\end{remark}

First, we generalize the angle function $\varphi_{e}$. Consider
a circular arc intersecting $\partial \D$ in some exterior angle 
$\beta\in[0,\pi)$ which intersects a circle contained in $\D$ in an angle 
$\theta\in(0,\pi)$. Let $c=c_{\text{hyp}}$ be the center of this circle and 
$r=r_{\text{hyp}}$ its radius. Set $\rho=\log\tanh\frac{r}{2}$ as above. Then 
the angle $\varphi_{e}^g(\rho,\beta,\theta)$ at $c$ is given by (see 
Figure~\ref{figsPhi}\subref{figDefPhi}):
\begin{align}
 \varphi_{e}^g(\rho,\beta,\theta) &= 
\frac{1}{2i}\log\left(\frac{\cos\beta 
-\cosh(\rho+i\theta)}{\cos\beta -\cosh(\rho-i\theta)} \right) \\
& =f_\theta(i\beta-\rho)- f_\theta(i\beta+\rho)\label{eqDefPhig}
\end{align}
In the first equation, the logarithm is chosen such that $\varphi_{e}^g\in 
(0,\pi)$. For the second equation we use a suitable generalization (in fact 
extention) of the function $f_\theta$ 
 which maps 
$D_\theta= \R\times(-i\pi,i\pi)\setminus B_\theta$ with $B_\theta= 
\{0\}\times(-i\pi,-i\theta]\cup \{0\}\times [i\theta,i\pi)$ to a simply 
connected 
domain in $\{x+iy\ :\ x\in(-\frac{\theta}{2}, \pi-\frac{\theta}{2}),\ y\in \R\} 
\subset\C$.

Note that there is an analogous formula for the distance $\delta$, namely
\begin{align}
-i\delta&= 
\frac{1}{2i}\log\left(\frac{\cosh\rho
-\cos(\beta+\theta)}{\cosh\rho -\cos(\beta-\theta)} \right) \\
& =f_\theta(\rho-i\beta)- f_\theta(\rho+i\beta)
\end{align}

\begin{remark}
Note that $f_\theta$ is one-to-one on $D_\theta$.

Furthermore we have for $\beta\in[0,\pi)$:
\begin{align}
& \lim_{x\to -\infty} f_\theta(x\pm i\beta) =0 ,\\
& \lim_{x\to +\infty} f_\theta(x\pm i\beta) =\pi-\theta, \\
& f_\theta(x+i\beta) + f_\theta(-x-i\beta) = f_\theta(-x+i\beta)+ 
f_\theta(x-i\beta) = \pi-\theta. \label{eqSumf}
\end{align}
\end{remark}

\begin{definition}
 Let $G$ be a graph constructed from a b-quad-graph $\mathscr
      D$ and let $\alpha$ be an admissible labelling. Suppose that ${\mathscr 
C}$ is a planar {\em Euclidean} circle pattern for ${\mathscr D}$ and 
$\alpha$ such that all circles intersect the hyperbolic disc $\D$. Then this 
circle pattern is called {\em generalized hyperbolic circle pattern} for 
${\mathscr D}$ and $\alpha$. To this circle pattern we associate a functional 
$\rho:V(G)\to\C$ which is $\rho(v)= \log\tanh\frac{r_{\text{hyp}}(v)}{2}$ for 
circles contained in $\D$ with hyperbolic radius $r_{\text{hyp}}(v)$. For 
circles intersecting $\partial \D$ with exterior intersection angle 
$\beta\in[0,\pi)$ we set $\rho(v)=i\beta$.
\end{definition}

We immediately deduce from the definition of $\varphi_{e}^g$ 
in~\eqref{eqDefPhig} the 
following generalization of Proposition~\ref{propShyp}~(i).

\begin{proposition}\label{propGenCirc}
 Let $G$ be a graph constructed from a b-quad-graph $\mathscr
      D$ and let $\alpha$ be an admissible labelling. 
Suppose that ${\mathscr C}$ is a generalized hyperbolic circle
      pattern for ${\mathscr D}$ and $\alpha$ with associated function $\rho$. 
Then for every interior vertex $v_0\in V_{int}(G)$ we have
\begin{equation}\label{eqhyprho2}
 2\pi -2\sum_{[v_0,v]\in E(G)} (f_{\alpha_{[v_0,v]}}(\rho(v)-\rho(v_0))- 
f_{\alpha_{[v_0,v]}}(\rho(v)+\rho(v_0))) =0.
\end{equation}
\end{proposition}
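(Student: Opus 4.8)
The plan is to reduce the statement to the already-established closing condition for genuine hyperbolic circle patterns (Proposition~\ref{propShyp}~(i)) by exploiting the analytic continuation of $f_\theta$ that was set up in the preceding remarks. The key observation is that the quantity $\varphi_e^g(\rho,\beta,\theta)$ was \emph{defined} in~\eqref{eqDefPhig} precisely as $f_\theta(i\beta-\rho)-f_\theta(i\beta+\rho)$, and that this formula specializes correctly: when the neighbouring circle ${\mathscr C}(v)$ lies in $\D$ with $\rho(v)\in(-\infty,0)\subset\R$, the expression $f_{\alpha_{[v_0,v]}}(\rho(v)-\rho(v_0))-f_{\alpha_{[v_0,v]}}(\rho(v)+\rho(v_0))$ is exactly the ordinary angle $\varphi_e(\rho(v_0),\rho(v),\theta)$ of Figure~\ref{figsPhi}\subref{figPhi}; and when ${\mathscr C}(v)$ crosses $\partial\D$ with exterior angle $\beta$, so that $\rho(v)=i\beta$, the same expression equals the generalized angle $\varphi_e^g(\rho(v_0),\beta,\theta)$ at the center $c_{\text{hyp}}(v_0)$ of the circle corresponding to $v_0$. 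So in all cases the $e$-th summand in~\eqref{eqhyprho2} is (twice) the angle at the vertex $v_0$ inside the kite $K_e$, measured hyperbolically.

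First I would make that case distinction explicit: fix an interior vertex $v_0$ (whose circle, being interior, necessarily lies inside $\D$, so $\rho(v_0)\in(-\infty,0)$ is a genuine real number), and for each edge $e=[v_0,v]\in E(G)$ check that $f_{\alpha_e}(\rho(v)-\rho(v_0))-f_{\alpha_e}(\rho(v)+\rho(v_0))$ equals the angle subtended at $c_{\text{hyp}}(v_0)$ by the portion of $K_e$ at $v_0$ — using the original definition of $\varphi_e$ when $\rho(v)$ is real, and~\eqref{eqDefPhig} together with the stated identity $\varphi_e^g(\rho,\beta,\theta)=f_\theta(i\beta-\rho)-f_\theta(i\beta+\rho)$ when $\rho(v)=i\beta$. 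Second, I would invoke planarity: since ${\mathscr C}$ is a planar (embedded, hence immersed) circle pattern and the kite pattern is locally isomorphic to $\mathscr D$ at the interior vertex $v_0$ (Definition~\ref{defcircpattern}(2)), the kites $K_e$ for the edges $e$ incident to $v_0$ close up exactly once around $c_{\text{hyp}}(v_0)$; hence the sum of these kite-angles at $v_0$ is $\pi$ (each intersection point is counted twice, once for each of the two circles through it, so the half-disc of total angle $\pi$ is swept). Multiplying by $2$ and rearranging gives $2\sum_{[v_0,v]\in E(G)}\bigl(f_{\alpha_{[v_0,v]}}(\rho(v)-\rho(v_0))-f_{\alpha_{[v_0,v]}}(\rho(v)+\rho(v_0))\bigr)=2\pi$, which is~\eqref{eqhyprho2}.

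The one point requiring genuine care — the main obstacle — is the branch bookkeeping for the generalized $f_\theta$ on the slit strip $D_\theta$. When $v$ has $\rho(v)=i\beta$ purely imaginary, the arguments $i\beta\pm\rho(v_0)=\pm\rho(v_0)+i\beta$ lie on the horizontal line $\{\Im=\beta\}$, safely away from the slit $B_\theta$ only if $\beta\notin\{0\}\cup[\theta,\pi)$ after the appropriate identification; I would verify that the domain $D_\theta=\R\times(-i\pi,i\pi)\setminus B_\theta$ and the normalization fixed in the remarks make $\varphi_e^g$ land in $(0,\pi)$ exactly as claimed, so that the per-edge angles we are summing are the \emph{geometric} angles and no hidden multiple of $2\pi$ creeps in when we total them around $v_0$. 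Once that is pinned down, the proof is immediate; indeed, given the definitions already in place, there is essentially nothing left to prove beyond recording the case analysis, which is why the statement is phrased as "we immediately deduce."
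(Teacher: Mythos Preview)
Your approach is correct and matches the paper's, which gives no proof beyond declaring the proposition an immediate consequence of the definition of $\varphi_e^g$ in~\eqref{eqDefPhig}. One minor wrinkle: your parenthetical about intersection points being ``counted twice'' and a ``half-disc of total angle $\pi$'' is muddled---the clean statement is that each summand $f_{\alpha_e}(\rho(v)-\rho(v_0))-f_{\alpha_e}(\rho(v)+\rho(v_0))$ equals \emph{half} the kite angle at $c_{\text{hyp}}(v_0)$ (the angle in one of the two congruent triangles forming $K_e$), and the full kite angles sum to $2\pi$ by planarity---but your final equation is right.
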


Now we generalize the hyperbolic functional $S_{\text{hyp}}$. We only change 
the terms for all edges connecting an interior vertex of $G$ to a boundary 
vertex with intersection angle $\beta\in[0,\pi)$. In other words, we change the 
term for edges where the function $\rho$ is real and negative on one vertex 
and purely imaginary or zero on the other vertex.

For $x\leq 0$ define
\begin{align*}
 F_{\beta,\theta} (x) &:=2\int_{-\infty}^x (f_\theta (\eta+i\beta) 
+f_\theta(\eta-i\beta))d\eta -2(\pi-\theta)x.
\end{align*}
For $x>0$ we set $F_{\beta,\theta} (x) = F_{\beta,\theta} (-x)$.

There is a relation to the $\Li$-function (at least for $\beta\not=\theta$) as 
in the finite case:
\begin{equation*}
 \int_{-\infty}^x f_\theta (\eta+i\beta)d\eta 
= \frac{1}{2i} (\Li(\text{e}^{x+i\beta+i\theta})- 
\Li(\text{e}^{x+i\beta-i\theta})).
\end{equation*}
By equation~\eqref{eqSumf} we can write
\begin{align*}
 -2(\pi-\theta)x &= \int_x^0 (f_\theta(\eta+i\beta) +f_\theta(-\eta-i\beta)
+f_\theta(\eta-i\beta) +f_\theta(-\eta+i\beta))d\eta \\
&= \int_x^{-x} 
f_\theta(\eta+i\beta)d\eta + \int_x^{-x} f_\theta(\eta-i\beta)d\eta .
\end{align*}
Thus we finally obtain
\begin{align*}
F_{\beta,\theta} (x) &= \frac{1}{2i} \left(\Li(\text{e}^{x+i\beta+i\theta})- 
\Li(\text{e}^{x+i\beta-i\theta}) - \Li(\text{e}^{-x+i\beta-i\theta}) + 
\Li(\text{e}^{-x+i\beta+i\theta}) \right.\\
&\left.\quad + \Li(\text{e}^{x-i\beta+i\theta}) - 
\Li(\text{e}^{x-i\beta-i\theta}) - \Li(\text{e}^{-x-i\beta-i\theta}) + 
\Li(\text{e}^{-x-i\beta+i\theta}) \right).
\end{align*}
Moreover, we obtain again with~\eqref{eqSumf}
\begin{align*}
 \frac{\partial F_{\beta,\theta}}{\partial x} 
&= 2(f_\theta (x+i\beta) +f_\theta(x-i\beta) -(\pi-\theta))
= 2 (f_\theta(x+i\beta) -f_\theta (-x+i\beta)) \\
&= -2\varphi^g(x,\beta,\theta).
\end{align*}

Now we are able to define the generalized hyperbolic functional 
$S_{\text{hyp}}^g(\rho)$. First, let $V_{\partial,\beta}(G)$ be the set of 
boundary vertices which correspond to circles intersecting $\partial \D$. Let 
$E_\beta(G)$ be the set of all edges which are incident to one interior vertex 
and one vertex of $V_{\partial,\beta}(G)$. Let $n=|V_{int}(G)|$, 
$k_\beta=|V_{\partial,\beta}(G)|$ and $k_0=|V_\partial(G)|-k_\beta$. 
We define $S_{\text{hyp}}^g:\R^{n+k_0}\times (i\R)^{k_\beta}\to\R$ similarly 
as in equation~\eqref{eqDefShyp} as a sum over all edges. The summands are the 
same as above in~\eqref{eqDefShyp} for all edges which are not incident to a 
vertex in $V_{\partial,\beta}(G)$. Otherwise we take the summand 
$F_{\beta(v),\alpha_{[v,v_0]}}$ for 
edges $[v,v_0]$ with one interior vertex $v_0$ and the boundary vertex $v$ with 
exterior intersection angle $\beta(v)$.

Note the following properties of $F_{\beta,\theta}$.
\begin{enumerate}[(i)]
 \item 
 \vspace{-1ex}
\begin{align*}
 \frac{\partial^2 F_{\beta,\theta}}{\partial x^2} 
&= \frac{\sin\theta}{\cosh(x+i\beta)-\cos\theta} + 
\frac{\sin\theta}{\cosh(-x+i\beta)-\cos\theta} \\
&= \frac{2\sin\theta(\cosh x \cos\beta -\cos\theta)}{(\cosh x \cos\beta 
-\cos\theta)^2 + (\sinh x\sin\beta)^2}.
\end{align*}
Thus for fixed boundary values $\rho|_{\partial V}$ (in particular for fixed 
$\beta(v)$) the functional 
$S_{\text{hyp}}^g$ is not necessarily convex.
\item If for all edges $e=[v,v_0]\in E_\beta$ with $v\in V_{\partial,\beta}$ we 
have $\cos\alpha_e< \cos\beta(v)$ 
then $S_{\text{hyp}}^g$ is strictly convex and any critical point must be 
a minimum. In this case existence can be proven along the same lines as for the 
original function $S_{\text{hyp}}$ in~\cite{BS02,Spr03}.
\end{enumerate}

As an immediate consequence Proposition~\ref{propGenCirc} can now be 
formulated as follows.

\begin{corollary}
 Let $G$ be a graph constructed from a b-quad-graph $\mathscr
      D$ and let $\alpha$ be an admissible labelling. 
Suppose that ${\mathscr C}$ is a generalized hyperbolic circle
      pattern for ${\mathscr D}$ and $\alpha$ with associated function $\rho^*$.

Then $\rho^*|_{V_{int}}$ is a critical point of the functional 
$S_{\text{hyp}}^g(\cdot,\rho^*|_{V_\partial})$.

Furthermore, if for all interior edges $e$ incident to a vertex 
$v\in V_{\partial,\beta}$ the inequality $\cos\alpha_{e}< \cos\beta(v)$ holds, 
then this critical point is the unique minimizer of 
$S_{\text{hyp}}^g(\cdot,\rho^*|_{V_\partial})$.
\end{corollary}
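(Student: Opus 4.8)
The plan is to read off both assertions directly from the machinery already assembled, rather than re-running the variational calculus from scratch. First I would establish the critical-point statement. By Proposition~\ref{propGenCirc}, the associated function $\rho^*$ of a generalized hyperbolic circle pattern satisfies, at every interior vertex $v_0$, the equation
\[
 2\pi -2\sum_{[v_0,v]\in E(G)} \bigl(f_{\alpha_{[v_0,v]}}(\rho^*(v)-\rho^*(v_0))- f_{\alpha_{[v_0,v]}}(\rho^*(v)+\rho^*(v_0))\bigr) =0.
\]
On the other hand, differentiating $S_{\text{hyp}}^g(\cdot,\rho^*|_{V_\partial})$ with respect to the variable $\rho_0=\rho(v_0)$ at an interior vertex, I would note that each edge summand contributes exactly as in the computation displayed after Proposition~\ref{propShyp}: for an edge between two interior vertices (or between an interior vertex and a boundary vertex corresponding to a circle contained in $\D$) the summand is the original $S_{\text{hyp}}$-summand, whose $\rho_0$-derivative is $-2\varphi_e(\rho(v),\rho(v_0),\alpha_e)$; for an edge $[v,v_0]\in E_\beta$ the summand is $F_{\beta(v),\alpha_{[v,v_0]}}(\rho(v_0))$, and the identity $\partial F_{\beta,\theta}/\partial x = -2\varphi^g(x,\beta,\theta)$ derived above gives derivative $-2\varphi_e^g(\rho(v_0),\beta(v),\alpha_e)$. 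Summing and using $\varphi_e^g(\rho,\beta,\theta)=f_\theta(i\beta-\rho)-f_\theta(i\beta+\rho)$ together with the convention $\rho(v)=i\beta(v)$ for $v\in V_{\partial,\beta}(G)$, the partial derivative becomes precisely the left-hand side of the equation above (up to sign), hence vanishes. Thus $\rho^*|_{V_{int}}$ is a critical point.

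For the second assertion I would invoke the convexity property recorded just before the Corollary. Under the hypothesis $\cos\alpha(e)<\cos\beta(v)$ for every interior edge $e$ incident to a vertex $v\in V_{\partial,\beta}(G)$, property~(ii) of $F_{\beta,\theta}$ states that all the modified edge summands are strictly convex, and the unmodified summands coming from the original $S_{\text{hyp}}$ are convex (indeed the Hessian of $S_{\text{hyp}}$ is positive definite, as recalled in Proposition~\ref{propShyp}~(iii)). Concretely, the condition $\cos\alpha_e<\cos\beta(v)$ forces $\cosh x\cos\beta(v)-\cos\alpha_e>0$ for all real $x$, so the displayed second-derivative formula for $F_{\beta,\theta}$ is strictly positive; hence $S_{\text{hyp}}^g(\cdot,\rho^*|_{V_\partial})$ is a strictly convex function on $\R^{n}$. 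A strictly convex $C^1$ function on $\R^n$ has at most one critical point, and that point, if it exists, is the global minimizer. Since we have just shown $\rho^*|_{V_{int}}$ is such a critical point, it is the unique minimizer.

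The only genuine subtlety — and the step I would be most careful about — is matching conventions at the boundary vertices in $V_{\partial,\beta}(G)$: one must check that substituting $\rho(v)=i\beta(v)$ into the generalized edge term $F_{\beta(v),\alpha_e}$ and into the derivative $\varphi_e^g$ reproduces exactly formula~\eqref{eqDefPhig} and hence the term appearing in~\eqref{eqhyprho2}, including the correct real/imaginary bookkeeping for the generalized $f_\theta$ on the strip $D_\theta$. This is where the identities \eqref{eqSumf} and the defining integral for $F_{\beta,\theta}$ are used, and everything else is bookkeeping; no new estimate is required beyond what is already in the excerpt. I would therefore present the proof as a short two-paragraph verification rather than a long computation.
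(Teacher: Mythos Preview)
Your proposal is correct and follows exactly the approach the paper intends: the paper itself gives no explicit proof, introducing the Corollary with ``As an immediate consequence Proposition~\ref{propGenCirc} can now be formulated as follows,'' and your two paragraphs simply spell out that immediate consequence using the derivative identity $\partial F_{\beta,\theta}/\partial x=-2\varphi^g(x,\beta,\theta)$ and the strict-convexity observation~(ii) already recorded just before the statement. The only minor omission is that you do not explicitly mention the vertex term $\sum_{v\in V_{int}(G)}2\pi\rho(v)$ in $S_{\text{hyp}}^g$, which supplies the $2\pi$ in the gradient; with that included there is no ``up to sign'' issue---the partial derivative is literally the left-hand side of~\eqref{eqhyprho2}.
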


Finally, we can generalize the maximum principle~\ref{lemMaxHyp}.

\begin{lemma}[Generalized Maximum Principle in the hyperbolic 
plane]\label{lemMaxHypGen}
 Let $G$ be a finite graph and let $\alpha:E(G)\to (0,\pi)$ be an admissible
labelling. Let ${\mathscr C}$ be a circle pattern for
$G$ and $\alpha$ which is contained in $\D$. Let ${\mathscr C}^*$ be a 
generalized circle pattern for $G$ and $\alpha$ with exterior intersection 
angles $\beta: V_{\partial,\beta}(G)\to[0,\pi)$. Suppose that for every 
boundary vertex either $v\in V_{\partial,\beta}(G)$ or  the inequality 
$r_{\text{hyp}}^*(v)\geq r_{\text{hyp}}(v)$ holds. Then for all interior 
vertices $r_{\text{hyp}}^*(v)\geq r_{\text{hyp}}(v)$.
\end{lemma}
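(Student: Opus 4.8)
The plan is to mimic the proof of the ordinary hyperbolic maximum principle (Lemma~\ref{lemMaxHyp}) by a gradient-pointing argument, but working with the generalized functional $S_{\text{hyp}}^g$ and being careful that the troublesome summands $F_{\beta,\theta}$ are now present and that $S_{\text{hyp}}^g$ need not be globally convex. Let $v_1,\dots,v_n$ enumerate $V_{\text{int}}(G)$, let $\rho,\rho^*:V(G)\to\C$ be the functions associated to ${\mathscr C}$ and ${\mathscr C}^*$ as in Proposition~\ref{propShyp} and the definition of generalized hyperbolic circle pattern, and set $\rho_i=\rho(v_i)$, $\rho^*_i=\rho^*(v_i)$ (these are real and negative for interior vertices of both patterns, since both patterns are circles honestly contained in $\D$ at interior vertices). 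On the boundary, $\rho$ is real negative everywhere while $\rho^*$ is real negative on $V_\partial(G)\setminus V_{\partial,\beta}(G)$ and purely imaginary $i\beta(v)$ on $V_{\partial,\beta}(G)$; by hypothesis $\rho^*(v)\ge\rho(v)$ (equivalently $r^*_{\text{hyp}}(v)\ge r_{\text{hyp}}(v)$) for the first kind of boundary vertex.

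First I would fix the boundary values of $\rho^*$ and consider the function $u\mapsto S_{\text{hyp}}^g(u,\rho^*|_{V_\partial})$ on $\R^n$; by Proposition~\ref{propGenCirc} and the formula for $\partial F_{\beta,\theta}/\partial x = -2\varphi^g$, the point $u=\rho^*|_{V_{\text{int}}}$ is a critical point. Then I would examine the closed box $U=\{u\in\R^n\ :\ u_i\le\rho^*_i\ \text{for all }i\}$ and show that on each face $u_i=\rho^*_i$ the $i$th partial derivative satisfies
\[
\frac{\partial S_{\text{hyp}}^g}{\partial u_i}(u)\ \ge\ \frac{\partial S_{\text{hyp}}^g}{\partial u_i}\bigl(\rho^*|_{V_{\text{int}}}\bigr)\ =\ 0 .
\]
For edges $[v_i,v_k]$ with $v_k$ an interior vertex or a boundary vertex of the non-$\beta$ type, the relevant summand is the old one, its $u_i$-derivative is $2(f_{\alpha}(\rho^*_k-u_i)-f_\alpha(\rho^*_k+u_i))-\cdots$, and the monotonicity computation already used in the proof of Lemma~\ref{lemMaxHyp} — namely $\partial_{u_k}\varphi_e>0$, which by the functional equation is equivalent to monotonicity in $u_i$ in the right direction — gives the inequality, strict as soon as some neighbour has $u_k<\rho^*_k$. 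The genuinely new case is an edge $[v_i,v]$ with $v\in V_{\partial,\beta}(G)$, where the summand is $F_{\beta(v),\alpha_e}(u_i-\,?\,)$; here I would use that $\partial F_{\beta,\theta}/\partial x=-2\varphi^g(x,\beta,\theta)=2(f_\theta(x+i\beta)-f_\theta(-x+i\beta))$ and that (from the displayed second derivative of $F_{\beta,\theta}$, or directly from $\partial_\rho\varphi^g<0$, which one reads off Figure~\ref{figsPhi}\subref{figDefPhi} as ``a bigger circle sees the arc under a smaller angle'') this contribution also moves the $i$th gradient component in the favourable direction as $u_i$ decreases from $\rho^*_i$. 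Summing over all edges incident to $v_i$ yields the desired inequality on $\partial U$.

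Having shown that the gradient of $S_{\text{hyp}}^g(\cdot,\rho^*|_{V_\partial})$ points out of (or tangent to) $U$ along $\partial U$, I would conclude that any critical point lying in $\R^n$ that we can reach must already lie in $U$; more carefully, I would run the one-dimensional argument coordinate by coordinate, or invoke the following standard fact: if $S$ has a critical point $p^*$ and along the boundary of the box $U=\{u\le p^*\}$ the outward normal derivative is everywhere $\ge 0$, then the critical point associated to the \emph{other} pattern — which is exactly $\rho|_{V_{\text{int}}}$ for $S_{\text{hyp}}^g(\cdot,\rho|_{V_\partial})$, but note the boundary data differ! — forces $\rho_i\le\rho^*_i$. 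To handle the mismatch in boundary data cleanly I would instead argue by monotone dependence: replace $\rho^*$'s boundary values one vertex at a time by those of $\rho$ (only possible on the non-$\beta$ vertices, where $\rho\le\rho^*$), using at each step that decreasing a boundary value of $\rho$ can only decrease the interior solution (this is itself the ordinary maximum principle Lemma~\ref{lemMaxHyp} applied between consecutive configurations, all of which are honest patterns in $\D$), and keep the $\beta$-type boundary vertices untouched throughout. The comparison then reduces to the single inequality proved above on the box $U$.

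The main obstacle I expect is the $\beta$-type edges: one must verify that the summand $F_{\beta,\theta}$ contributes to $\partial S_{\text{hyp}}^g/\partial u_i$ with the correct sign even though $S_{\text{hyp}}^g$ is not convex there, and one must make sense of ``comparing a pattern honestly in $\D$ with one sticking out of $\D$'' — the interior solution $\rho^*|_{V_{\text{int}}}$ is still real (all interior circles lie in $\D$), so the relevant section of $S_{\text{hyp}}^g$ restricted to $\R^n$ with the imaginary boundary data $i\beta$ fixed is a genuine real-valued function, and its partial derivatives are given by the formulas above; the key monotonicity $\partial_x\varphi^g(x,\beta,\theta)<0$ for $x<0$ is exactly what the displayed expression for $\partial^2 F_{\beta,\theta}/\partial x^2$ encodes (its sign is that of $\cosh x\cos\beta-\cos\theta$, which near the geometrically relevant regime is controlled just as in He's argument), and once that sign is pinned down the rest is the same box-and-gradient argument as in Lemma~\ref{lemMaxHyp}.
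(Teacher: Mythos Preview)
Your overall framework — the box $U=\{u_i\le\rho^*_i\}$ and a gradient-pointing argument — is the right one, but you apply it to the wrong functional, and this creates a gap that your ``monotone dependence'' fix does not close. You work with $S_{\text{hyp}}^g(\cdot,\rho^*|_{V_\partial})$, whose critical point is $\rho^*|_{V_{\text{int}}}$; the gradient argument then locates the minimizer of \emph{that} functional in $U$. But $\rho|_{V_{\text{int}}}$ is the minimizer of the \emph{ordinary} functional $S_{\text{hyp}}(\cdot,\rho|_{V_\partial})$, with different boundary data \emph{and} different summands on the $\beta$-edges. Your proposed repair — swapping boundary values on non-$\beta$ vertices one at a time — still leaves the $i\beta$ boundary data in place, so you never reach a functional whose critical point is $\rho$; and since $S_{\text{hyp}}^g$ is not convex, you cannot even be sure its critical points are minima.

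The paper avoids this entirely by doing the box argument for the \emph{ordinary} strictly convex functional $S_{\text{hyp}}(\cdot,\rho|_{V_\partial})$, whose unique minimum is $\rho|_{V_{\text{int}}}$. On a face $u_i=\rho^*_i$, the $i$th partial of $S_{\text{hyp}}$ involves, for an edge to a $\beta$-boundary vertex $v$, the term $-2\varphi_e(\rho^*_i,\rho(v),\theta)$ with $\rho(v)<0$ real. To compare this with $0=\partial_i S^g_{\text{hyp}}(\rho^*)$, which at the same edge carries $-2\varphi^g_e(\rho^*_i,\beta(v),\theta)$, one needs the pointwise inequality
\[
\varphi^g_e(x,\beta,\theta)\ >\ \varphi_e(x,u,\theta)\qquad\text{for all }x<0,\ u<0,
\]
and the paper proves this via $\partial_\beta\varphi^g_e>0$ (monotonicity in $\beta$) together with $\varphi^g_e(x,0,\theta)=\lim_{u\nearrow 0}\varphi_e(x,u,\theta)$ and the known $\partial_u\varphi_e>0$. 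This is the missing idea: the comparison is between the two angle functions, not the monotonicity $\partial_x\varphi^g<0$ that you emphasise (which, as you note yourself, has indefinite sign). Once this inequality is in hand, the box argument goes through for $S_{\text{hyp}}$ and its strict convexity immediately gives $\rho_i\le\rho^*_i$.
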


\begin{remark}
If $V_{\partial,\beta}(G)\not=\emptyset$ or if $r_{\text{hyp}}^*(v)>
r_{\text{hyp}}(v)$ for one boundary vertex then the inequality is strict for 
all interior vertices.
\end{remark}

\begin{proof}
We suppose that $V_{\partial,\beta}(G)\not=\emptyset$ because the case
$V_{\partial,\beta}(G)=\emptyset$ has be proven in Lemma~\ref{lemMaxHyp}. The 
idea of the proof is the same.

Let $v_1,\dots,v_n$ be a numbering of the interior vertices of $G$ where 
$n=|V_{int}(G)|$.
Let $\rho$ and $\rho^*$ be the associated functions to ${\mathscr C}$ and 
${\mathscr C}^*$ respectively as above.
Recall that the circle pattern 
${\mathscr C}$ is the unique minimizer of the strictly convex functional 
$S_{\text{hyp}}$ fixing the boundary values $\rho|_{V_\partial}$.
Consider in $\R^{n}$ the $n$-dimensional interval $U=\{ u\in \R^{n}\ :\ 
u_i<\rho^*(v_i)\}$. 
We will again show that for fixed boundary values $\rho|_{\partial V}$ the 
gradient of $S_{\text{hyp}}(\cdot, \rho|_{\partial V}):\R^n\to\R$ is pointing 
into the complement of $U$ on the boundary $\partial U$ or is contained in 
$\partial U$ (or in one of the hyperplanes $H_i=\{u\in\R^n\ |\ 
u_i= \rho^*(v_i)\}$).

From the proof of Lemma~\ref{lemMaxHyp} we know that if the vertex $v_i$ is not 
incident to any boundary vertex in $V_{\partial,\beta}(G)$ we have
\begin{equation}\label{eqVergleich}
 \frac{\partial S_{\text{hyp}}(\rho)}{\partial u_i}(u_1,\dots, 
u_i=\rho^*(v_i),\dots u_n) 
\geq \frac{\partial S_{\text{hyp}}(\rho)}{\partial u_i} (\rho^*(v_1),\dots, 
\rho^*(v_n))=0,
\end{equation}
where the sum is taken over all edges $[v_i,v_k]\in E(G)$ incident to $v_i$.

Note that 
\begin{equation*}
\frac{\partial \varphi^g_{e}}{\partial \beta} = 
-\frac{\sin\theta \sinh x \sin\beta}{|\cosh(x+i\beta)-\cos\theta|^2}>0
\end{equation*} 
for $\beta>0$ and $x<0$ (and $=0$ for $\beta=0$). Recall that $\frac{\partial 
\varphi_{e}}{\partial u} (x,u,\theta) >0$, therefore we deduce that
\begin{equation*}
\varphi^g_{e}(x,\beta,\theta)\geq 
\varphi^g_{e}(x,0,\theta)=\lim_{u\nearrow 0} 
\varphi_{e}(x,u,\theta) >\varphi_{e}(x,u,\theta)
\end{equation*}
for all $x, u\in\R$, $x,u<0$. This implies as in the proof of 
Lemma~\ref{lemMaxHyp} that estimate~\eqref{eqVergleich} holds for all 
vertices.

Thus the minimum is contained in $U$.
  \end{proof}

As an application we can deduce Theorem~\ref{theoRidhyp} with the same proof as
for Theorem~1.2 in~\cite{He99}.

\section{Rigidity of infinite Euclidean circle patterns}\label{secRigEuc}

In this section we show Theorem~\ref{theoRidEuc} on rigidity of infinite 
circle patterns in the plane. The proof adapts ideas and methods of He's 
rigidity proof in~\cite{He99}. It mainly relies
on the maximum principle in Lemma~\ref{lemMaxHypGen} and on estimates on 
vertex extremal length for patterns of circles explained in 
Section~\ref{secVEL}.

Let $\mathscr D$, $\alpha$, $\mathscr C$ and $\widetilde{\mathscr C}$ be as 
in Theorem~\ref{theoRidEuc}. Let $G$ be the graph of white vertices associated 
to $\mathscr D$. Let $r,\tilde{r}:V(G)\to (0,\infty)$ be the Euclidean radius 
functions of $\mathscr C$ and $\widetilde{\mathscr C}$ respectively.

 The following lemma will be proved later in Appendix~\ref{secProofLemma}.

\begin{lemma}\label{lemCompRad}
Under the assumptions of Theorem~\ref{theoRidEuc} there is a constant $C\geq 1$ 
such that for every vertex $v\in V(G)$
\begin{equation}\label{eqCompRad}
 \frac{1}{C}\leq \frac{\tilde{r}(v)}{r(v)} \leq C.
\end{equation}
\end{lemma}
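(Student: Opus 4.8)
The statement we must establish is a uniform two-sided comparison of the radius functions $r$ and $\tilde r$ of two infinite embedded convex circle patterns with the same combinatorics and the same labelling $\alpha$, under the hypothesis $\alpha>\alpha_0>0$ and convexity of all kites. My plan follows He's strategy: argue by contradiction, build a uniform separation between the two patterns on larger and larger finite pieces, and then run out of room using vertex extremal length.

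\textbf{Step 1: Reduction to a one-sided bound at a base vertex.} After normalizing both patterns by a Euclidean similarity, we may assume that the circles attached to a fixed base vertex $v_0\in V(G)$ satisfy $r(v_0)=\tilde r(v_0)$. It then suffices to prove that $\sup_{v}\tilde r(v)/r(v)<\infty$; the lower bound follows by interchanging the roles of $\mathscr C$ and $\widetilde{\mathscr C}$. Suppose the sup is infinite. Then there is a sequence of vertices $w_j$ with $\tilde r(w_j)/r(w_j)\to\infty$.

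\textbf{Step 2: Localizing the maximum principle.} The key tool is the generalized maximum principle in the hyperbolic plane, Lemma~\ref{lemMaxHypGen}, applied on large finite subgraphs exhausting $G$. Fix a finite, simply connected subgraph $G_k\subset G$ containing $v_0$. On $G_k$, consider $\mathscr C$ rescaled so that it lies inside the unit disc $\D$, and interpret $\widetilde{\mathscr C}$ (suitably rescaled) as a \emph{generalized} hyperbolic circle pattern whose boundary circles may cross $\partial\D$. Because the maximum of a discrete-harmonic-type quotient is attained on the boundary (Lemma~\ref{lemMaxEuc}) and, in the hyperbolic/generalized setting, the radius inequality propagates inward (Lemma~\ref{lemMaxHypGen}), a large ratio $\tilde r/r$ deep inside $G_k$ forces an even larger ratio on $\partial G_k$: there is a path in $G_k$ from (near) $v_0$ to $\partial G_k$ along which the discrete ``derivative'' $|\log\tilde r(v)-\log r(v)|$ between consecutive vertices is bounded below by a definite positive amount $\eta=\eta(\alpha_0)>0$. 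This is where convexity of the kites and the lower bound $\alpha>\alpha_0$ enter: they control the functions $f_\theta$ and $\varphi_e$ uniformly (via Lemma~\ref{lemPropf}), so that a fixed defect in the closing condition cannot be spread over arbitrarily many edges without accumulating.

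\textbf{Step 3: Vertex extremal length obstruction.} The conclusion of Step 2 gives, on the annular region of $G_k$ between a neighborhood of $v_0$ and $\partial G_k$, a nonnegative vertex metric $m(v)=|\log\tilde r(v)-\log r(v)|$ (or an increment thereof) whose ``length'' along every connecting path is bounded below, while its ``area'' $\sum_v m(v)^2$ is controlled by a telescoping/energy estimate coming from the fact that $\log(\tilde r/r)$ changes by a bounded total amount is \emph{not} available — instead one bounds the area by the geometry of the embedded convex kites, exactly as in Section~\ref{secVEL}. Thus the vertex extremal length (modulus) of the family of paths separating $v_0$ from $\partial G_k$ stays bounded above as $k\to\infty$. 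But for an infinite planar graph carrying an embedded locally finite pattern of uniformly non-degenerate kites — which is guaranteed by convexity plus $\alpha>\alpha_0$ — these separating moduli must diverge (this is the planar-graph analogue of the divergence of the modulus of round annuli with radii tending to infinity, and is the content of the VEL estimates of Section~\ref{secVEL}). This contradiction proves $\sup_v \tilde r/r<\infty$, and hence the lemma.

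\textbf{Main obstacle.} The delicate point is Step 2: converting a ``large pointwise ratio somewhere inside'' into a \emph{uniform per-edge lower bound} on $|\log\tilde r-\log r|$ along a path to the boundary, with the bound depending only on $\alpha_0$ and the convexity hypothesis, not on $k$. This requires quantitative control of how much the angle-sum defect in~\eqref{eqFgen} (equivalently, the gradient of $S_{\text{hyp}}^g$) can be absorbed across a single edge, uniformly over all admissible labellings $\alpha\in(\alpha_0,\pi)$ and all convex kite shapes; the estimates of Section~\ref{secVEL} together with the monotonicity and boundedness properties of $f_\theta$ in Lemma~\ref{lemPropf} are what make this possible, and pinning down the right quantitative statement (and checking the generalized pattern $\widetilde{\mathscr C}$ genuinely satisfies the hypotheses of Lemma~\ref{lemMaxHypGen} after rescaling, including the sign condition $\cos\alpha_e<\cos\beta(v)$ where needed) is the heart of the argument. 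The full details are deferred to Appendix~\ref{secProofLemma}.
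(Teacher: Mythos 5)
Your plan has the right ingredients in view (the generalized hyperbolic maximum principle and vertex extremal length), but the mechanism you propose in Steps~2 and~3 does not work, and it is not the paper's argument. First, nothing in Lemma~\ref{lemMaxEuc} or Lemma~\ref{lemMaxHypGen} converts ``$\tilde r/r$ is large at some interior vertex'' into a \emph{per-edge} lower bound $\eta(\alpha_0)>0$ on $|\log\tilde r(v)-\log r(v)|$ along a path to $\partial G_k$: the maximum principle only locates where the extremum of the quotient sits, and the log-ratio can grow arbitrarily slowly over arbitrarily many edges, so no uniform increment exists. Second, the metric $m(v)=|\log\tilde r(v)-\log r(v)|$ you feed into the VEL argument has no area bound: the estimates of Section~\ref{secVEL} (Corollary~\ref{corEstDiscs}, conditions~(1)--(3)) control $\sum_v r(v)^2$ by the Euclidean area covered by the pattern, and say nothing about sums of squared log-ratios. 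You notice mid-sentence that the telescoping bound ``is not available'' but do not supply a substitute, and the concluding deferral to Appendix~\ref{secProofLemma} leaves precisely the heart of the argument unproved.

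The paper's actual proof is structured quite differently. It introduces $\tau(v)=r(v)/d(0,B(v))$ and splits into the cases $\tau(\mathscr C)<\infty$ and $\tau(\mathscr C)=\infty$. In the first case one builds nested circle families $V_j=V_{\ci(\delta^j R_0)}$ that are pairwise disjoint and separating; Lemmas~\ref{lemEstSumVel} and~\ref{lemEstVel} (with the \emph{diameter} metric, whose area is controlled) give $\vel(V_2,V_{2k-1})>C_6$ for a fixed $k$, and then Lemma~\ref{lemIntRad} applied to $\widetilde{\mathscr C}$ produces a round circle $\ci(\tilde\rho)$ whose vertex set separates $V_2$ from $V_{2k-1}$ in the image pattern. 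This yields a geometric sandwich ($\tilde B(v)\subset\B(2\tilde R)$ for the inner subgraph, $B(v)\subset\B(R_{2k})$ for the outer one), to which Lemma~\ref{lemMaxHypGen} is applied twice, once in each direction, after rescaling each pattern into $\tfrac12\D$; comparability of hyperbolic and Euclidean radii there gives the two-sided bound with $C=\hat C_0^2\delta^{2k-1}$. The case $\tau(\mathscr C)=\infty$ requires an additional argument (a VEL contradiction in a subgraph to show $\limsup\tilde\tau>0$ on $W=\{\tau\geq1\}$, followed by Möbius transformations sending the complements of far-away large discs to $\D$). None of these steps --- the dichotomy on $\tau$, the separating-circle Lemma~\ref{lemIntRad}, or the Möbius normalization --- appears in your outline, so the proposal as written has a genuine gap rather than being an alternative proof.
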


Assuming this lemma, we will first construct a one-parameter family of immersed 
circle patterns for $\mathscr D$ and $\alpha$ joining $\mathscr C$ and 
$\widetilde{\mathscr C}$ as follows.
For every vertex $v\in V(G)$ denote $\lambda(v)=\log(\tilde{r}(v)/r(v))$ and 
define 
$\hat{r}(v,t):= \text{e}^{\lambda(v)t}r(v)$. Then $\hat{r}(v,0)=r(v)$, 
$\hat{r}(v,1)= \tilde{r}(v)$ and 
\begin{equation}\label{eqEstrtilde}
 \left|\log\frac{\hat{r}(v,t+\eps)}{\hat{r}(v,t)}\right|= |\eps| 
|\lambda(v)| \leq |\eps| \log C
\end{equation}
for $t,t+\eps\in[0,1]$ by~\eqref{eqCompRad}.

Let $G_n$ be an increasing sequence of subgraphs of $G$ whose union is $G$. We 
assume that $G_n$ is the associated graph of white vertices of a b-quad-graph 
${\mathscr D}_n$ which is a cell decomposition of a closed topological disc. 
Then a subpattern of $\mathscr C$ is a circle pattern for $G_n$ and 
$\alpha|_{G_n}$. By Theorem~\ref{theoDirichlet} and 
Proposition~\ref{propRadius} 
for every $n$ and every 
$t\in[0,1]$ there exists an immersed circle pattern ${\mathscr C}_{n,t}$ for 
$G_n$ and $\alpha|_{G_n}$ with boundary values $r_{n,t}|_{\partial V(G_n)} = 
\hat{r}(\cdot, t)|_{\partial V(G_n)}$. Lemma~\ref{lemMaxEuc} together with 
estimate~\eqref{eqEstrtilde} implies that 
\begin{equation*}
 \left|\log\frac{r_{n,t+\eps}(v)}{r_{n,t}(v)}\right| \leq |\eps| \log C
\end{equation*}
for any vertex $v\in V(G_n)$. Thus
\begin{equation}
 \left|\frac{d}{dt}\log r_{n,t}(v)\right| \leq \log C.
\end{equation}

Now, replacing by a suitable subsequence and applying Euclidean transformations 
if necessary, we may assume that for every vertex $v\in V(G)$ the sequence of 
circles ${\mathscr C}_{n,t}(v)$ converges in the Hausdorff metric to some limit 
circle ${\mathscr C}_{t}(v)$. Then ${\mathscr C}_{t}$ is for every $t\in[0,1]$ 
an immersed circle pattern for $G$ and $\alpha$.
By the uniqueness part of Theorem~\ref{theoDirichlet} we may assume that 
${\mathscr C}_{n,0}$ are subpatterns of ${\mathscr C}$ and therefore ${\mathscr 
C}_{0}= \mathscr C$. Similarly we assume that ${\mathscr C}_{1}= {\mathscr 
C}^*$.

From estimate~\eqref{eqEstrtilde} we deduce
\begin{equation}\label{eqEstlogr}
 \left|\frac{d}{dt}\log r_{t}(v)\right| \leq \log C
\end{equation}
where the derivative with respect to $t$ is taken in the generalized 
(distributional) sense.
Let $\ell(v,t)= \log r_{t}(v)- \log r_{0}(v)$. Then we have $\ell(v,0)=0$ and 
\begin{equation}\label{eqBoundh}
\left|\frac{d}{dt}\ell(v,t)\right| \leq \log C
\end{equation} 
by~\eqref{eqEstlogr}.
Let $t\in[0,1]$ be fixed. Define 
\[h(v)=h(v,t) = \frac{d}{dt}\ell(v,t).\]
We will show that $h$ is harmonic in the electrical network based on $G$ with 
conductance 
\begin{equation}\label{eqdefmu}
 \mu([v_0,v_1]) = 2 f'_{\alpha_{[v_0,v_1]}}(\log r_{t}(v_1)-\log r_{t}(v_0)) \
\end{equation}
on the edge $[v_0,v_1]$. Note that $\mu([v_0,v_1])= \mu([v_1,v_0])>0$ by 
Lemma~\ref{lemPropf}, so the conductance is well defined and positive on 
$E(G)$. 

\begin{remark}\label{remArea}
The conductance $\mu([v_0,v_1])$ has the following geometrical 
interpretation. Consider for a circle pattern ${\mathscr C}$ the kite 
associated to the two intersecting circles ${\mathscr C}(v_0)$ and ${\mathscr 
C}(v_1)$ as in Figure~\ref{figKite}~(left). 
Then $\mu([v_0,v_1])$ is equal to the ratio of the Euclidean length 
$H([v_0,v_1])$ of the distance between the intersection points by the 
distance $L([v_0,v_1])=|c(v_0)-c(v_1)|$ between the centers of 
circles.
\end{remark}

Let $v_0\in V(G)$ be an arbitrary vertex and let  $v_1, v_2, \dots, 
v_l,v_{l+1}=v_1$ be the chain of neighboring vertices. The {\em Laplacian} 
$\Delta h$ is then defined as
\begin{equation}\label{eqdeflap}
 \Delta h(v_0)= \sum_{k=1}^l \mu([v_0,v_k]) (h(v_k)-h(v_0)).
\end{equation}

\begin{lemma}\label{lemharm}
 The function $h(\cdot)= h(\cdot,t)$ is harmonic in the network based on $G$ 
where the conductance
is defined in~\eqref{eqdefmu}.
\end{lemma}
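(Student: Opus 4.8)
The plan is to differentiate the closing condition~\eqref{eqFgen} with respect to $t$ at each interior vertex and read off that the resulting linear relation is exactly $\Delta h(v_0)=0$. Since ${\mathscr C}_t$ is an immersed circle pattern for $G$ and $\alpha$, Proposition~\ref{propRadius} gives, for every interior vertex $v_0\in V_{int}(G)$ and all $t\in[0,1]$,
\begin{equation*}
 \sum_{[v,v_0]\in E(G)} f_{\alpha_{[v,v_0]}}\bigl(\log r_t(v)-\log r_t(v_0)\bigr) = \pi.
\end{equation*}
First I would justify that $t\mapsto\log r_t(v)$ is locally Lipschitz (indeed by~\eqref{eqEstlogr} its distributional derivative is bounded by $\log C$), so it is differentiable for almost every $t$, and that for such $t$ we may differentiate the finite sum term by term. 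Applying the chain rule and using that $f'_\theta$ is the derivative computed in Lemma~\ref{lemPropf}(1), the $t$-derivative of the $[v,v_0]$-summand is
\begin{equation*}
 f'_{\alpha_{[v,v_0]}}\bigl(\log r_t(v)-\log r_t(v_0)\bigr)\,\Bigl(\tfrac{d}{dt}\log r_t(v)-\tfrac{d}{dt}\log r_t(v_0)\Bigr)
 = \tfrac12\,\mu([v,v_0])\,\bigl(h(v)-h(v_0)\bigr),
\end{equation*}
where $\mu$ is the conductance defined in~\eqref{eqdefmu} and $h(v)=\frac{d}{dt}\ell(v,t)=\frac{d}{dt}\log r_t(v)$ (note $\frac{d}{dt}\log r_0(v)$ is independent of $t$, so differentiating $\ell(v,t)=\log r_t(v)-\log r_0(v)$ is the same as differentiating $\log r_t(v)$). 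Summing over all edges incident to $v_0$ and using that the left-hand side $\pi$ is constant, we get $\sum_{[v,v_0]}\mu([v,v_0])(h(v)-h(v_0))=0$, which is precisely $\Delta h(v_0)=0$ by~\eqref{eqdeflap}.

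There are two points requiring a little care. The first is the interchange of differentiation and summation: this is purely local and finite, so once $t$-Lipschitz continuity of each $\log r_t(v)$ is in hand it is automatic for a.e.\ $t$; for the remaining exceptional $t$ one notes that the identity $\Delta h(v_0)=0$ holds in the distributional sense and $h$ is itself (after choosing a representative) continuous in $t$, so the conclusion persists for all $t\in[0,1]$. The second, and the real substance, is making sure the $t$-regularity of the radii is genuinely available; but this has already been established in the construction preceding the lemma — estimate~\eqref{eqEstlogr} bounds $\frac{d}{dt}\log r_t(v)$ by $\log C$ — so no additional argument is needed here.

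The main (though modest) obstacle is therefore bookkeeping rather than a conceptual difficulty: one must be careful that $h$ is well-defined (as an a.e.-defined function of $t$, for each fixed vertex) and that the distributional derivative used in~\eqref{eqEstlogr} agrees with the pointwise derivative where the latter exists, so that the harmonicity identity, derived pointwise a.e., can be asserted for the function $h(\cdot)=h(\cdot,t)$ appearing in the statement. Once this is settled, the lemma follows by the one-line differentiation above.
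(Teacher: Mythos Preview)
Your proof is correct and takes essentially the same approach as the paper: differentiate the closing condition~\eqref{eqFgen} in $t$ and apply the chain rule to obtain $\Delta h(v_0)=0$. The paper's proof is a two-line version of yours, omitting the regularity discussion you include (it simply refers to the distributional derivative already introduced before~\eqref{eqEstlogr}).
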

\begin{proof}
Remember that we have $\sum_{k=1}^l 2 f_{\alpha_{[v_0,v_k]}}(\log 
r_{t}(v_k)-\log r_{t}(v_0)) -2\pi =0$ by~\eqref{eqFgen} in 
Proposition~\ref{propRadius}. 
Differentiating with respect to $t$ we easily obtain $\Delta h(v_0)=0$ by the 
chain rule.
  \end{proof}

\begin{lemma}\label{lemrec}
For every embedded locally finite circle pattern covering the whole complex 
plane the network based on $G$ with conductance $\mu(e)$ on edges defined 
in~\eqref{eqdefmu} is recurrent.
\end{lemma}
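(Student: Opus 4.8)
The plan is to prove recurrence by showing that the effective conductance from a fixed vertex $v_0$ to infinity vanishes. I would normalize so that the centre $c(v_0)$ of the corresponding circle sits at the origin, and test the Dirichlet form against the radial cut-off functions that make the parabolicity of $\C$ visible. For $R>1$ let $\eta_R(z)=\psi_R(|z|)$, where $\psi_R\equiv 1$ on $[0,1]$, $\psi_R(s)=1-\log s/\log R$ on $[1,R]$, and $\psi_R\equiv 0$ on $[R,\infty)$, and set $f(v)=\eta_R(c(v))$. By local finiteness only finitely many centres lie in $\{|z|\le R\}$, so $f$ equals $1$ near $v_0$ and vanishes off a finite set; hence $\sum_{e=[v_0,v_1]}\mu(e)\,(f(v_1)-f(v_0))^2$ is an upper bound for the effective conductance from $v_0$ to infinity, and it suffices to prove that this energy tends to $0$ as $R\to\infty$.

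The mechanism is the geometric meaning of the conductance recorded in Remark~\ref{remArea}. Writing $H(e)$ and $L(e)$ for the two diagonals of the kite $K_e$ (the distance between the two intersection points and the distance $L(e)=|c(v_0)-c(v_1)|$ between the centres), we have $\mu(e)=H(e)/L(e)$. Since a kite has perpendicular diagonals, $\mathrm{Area}(K_e)=\tfrac12 H(e)L(e)$, so $\mu(e)L(e)^2=2\,\mathrm{Area}(K_e)$. Bounding the increment of $f$ along the diagonal segment $\gamma_e=[c(v_0),c(v_1)]$ by $|f(v_1)-f(v_0)|\le L(e)\sup_{\gamma_e}|\nabla\eta_R|$ gives, for each edge, $\mu(e)(f(v_1)-f(v_0))^2\le 2\,\mathrm{Area}(K_e)\sup_{\gamma_e}|\nabla\eta_R|^2$. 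Because the pattern is embedded and covers $\C$, the kites have disjoint interiors and tile the plane, so summing these per-edge bounds should reproduce the continuous Dirichlet integral $2\int_{\C}|\nabla\eta_R|^2\,dA=4\pi\int_1^R\psi_R'(s)^2 s\,ds=4\pi/\log R$, which tends to $0$. This is exactly the parabolicity of $\C$ transferred to the network, and it is what forces recurrence rather than transience.

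The step that needs care, and which I expect to be the main obstacle, is replacing $\mathrm{Area}(K_e)\sup_{\gamma_e}|\nabla\eta_R|^2$ by a constant multiple of $\int_{K_e}|\nabla\eta_R|^2\,dA$, so that the per-edge estimates genuinely sum to the plane integral. This requires that $|\nabla\eta_R|^2$ have bounded oscillation across each kite and that each kite be ``fat'', with transverse width comparable to $H(e)$ over a definite portion of $\gamma_e$. Both facts follow from the standing hypotheses of Theorem~\ref{theoRidEuc}: convexity gives $\gamma_e\subset K_e$, and together with the lower bound $\alpha\ge\alpha_0$ it controls the aspect ratio of the kites, so that any kite whose diameter is small compared with its distance to the origin lies in an annulus $\{a\le|z|\le 2a\}$ on which $\psi_R'(|z|)^2=1/(|z|^2\log^2 R)$ varies by a bounded factor. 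The remaining kites, whose diameter is large compared with their distance to the origin, are sparse: disjointness of interiors permits only a bounded number in each dyadic annulus $\{2^k\le|z|<2^{k+1}\}$, and since $H(e)/L(e)$ is uniformly bounded each such kite contributes at most $C(\log(b_e/a_e))^2/\log^2 R = O(1/\log^2 R)$, where $[a_e,b_e]$ is the radial range of $\gamma_e$; summed over the $O(\log R)$ relevant scales their total contribution is $O(1/\log R)$. Combining the two estimates yields total energy $\le C/\log R\to 0$, so the effective conductance to infinity is $0$ and the network is recurrent.
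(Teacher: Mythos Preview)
Your approach is different from the paper's and, as written, proves a weaker statement. The paper does not try to make the effective conductance to infinity tend to zero. Instead it invokes the criterion of Lemma~\ref{lemReff}: it suffices that for each combinatorial ball $B(v_0,m)$ one can find a finite set $B$ with $R_{\text{eff}}(B(v_0,m),V\setminus B)\ge C$ for a \emph{fixed} constant $C$. This allows a linear (1\nobreakdash-Lipschitz) test function $g(v)=d(c(v),\ci(R_1))$ on a suitably normalized annulus of width~$1$, so that $|g(x)-g(y)|\le L(e)$ holds exactly and $\mu(e)(g(x)-g(y))^2\le H(e)L(e)=\text{Area}(K_e)$ without any oscillation estimate. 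Summing gives the area of a fixed disc, hence a uniform energy bound. No convexity of kites and no lower bound on $\alpha$ are used; only embeddedness (disjoint kite interiors) and local finiteness enter.

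By contrast, your logarithmic cut-off forces you to control the variation of $|\nabla\eta_R|^2$ across each kite, and you explicitly invoke the convexity and the bound $\alpha\ge\alpha_0$ from Theorem~\ref{theoRidEuc} to do this and to bound $H(e)/L(e)$ uniformly. Those hypotheses are \emph{not} part of the lemma as stated (``for every embedded locally finite circle pattern\dots''), so your argument establishes only a special case. Even granting them, the ``large kite'' paragraph is sketchy: you assert a bounded number of such kites per dyadic annulus and $\log(b_e/a_e)=O(1)$, but a kite with one centre very close to the origin can have $b_e/a_e$ arbitrarily large, and the counting has to be combined more carefully with local finiteness. The fix is exactly the paper's idea: trade the logarithmic test function for a Lipschitz one and the ``conductance $\to 0$'' goal for the uniform-resistance criterion of Lemma~\ref{lemReff}; then $\mu(e)(\Delta f)^2\le\text{Area}(K_e)$ is immediate and the whole oscillation discussion disappears.
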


Our proof is based on the following characterization.
\begin{lemma}[{see~\cite[Lemma~9.22]{LP}} or~\cite{GGN13}]\label{lemReff}
 An infinite graph $G$ with edge weights $\mu(e)>0$ is recurrent if for some 
vertex 
$v_0$ there exists a constant $C>0$ such that for every integer $m\geq 0$ there 
exists a finite vertex set $B$ such that 
\begin{equation}
 R_{\text{eff}}(B(v_0,m),V\setminus B)\geq C.
\end{equation}
Here $B(v_0,m)=\{v\in V\ :\ \mathrm{dist}_G(v_0,v)\leq m\}$ is the ball of 
radius $m$ in the graph distance metric, that is the set of vertices which can 
be reached from $v_0$ via a connected path in $G$ with at most $m$ edges. 

The {\em effective resistance} between two sets $A,Z\subset V$ of vertices can 
be defined by the discrete Dirichlet principle
\begin{equation}\label{eqdefReff}
 \frac{1}{R_{\text{eff}}(A,Z)}= \min\left\{{\mathcal E}(g)\ :\ g:V\to\R, 
g|_A=0, 
g|_Z=1\right\}
\end{equation}
where ${\mathcal E}(g)= \sum_{e=[x,y]\in E} 
\mu(e)(g(x)-g(y))^2$ is the discrete {\em Dirichlet energy}.
\end{lemma}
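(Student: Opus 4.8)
The plan is to deduce recurrence from the standard criterion that a connected, locally finite network with conductances $\mu$ is recurrent if and only if the effective resistance from $v_0$ to infinity is infinite, i.e.\ $\lim_{B\uparrow V} R_{\text{eff}}(v_0,V\setminus B)=\infty$, the monotone limit being taken over finite sets $B$ exhausting $V$ (it exists by Rayleigh monotonicity and is independent of the exhaustion). It therefore suffices to produce one exhaustion $B_0\subseteq B_1\subseteq\cdots$ along which $R_{\text{eff}}(v_0,V\setminus B_N)\to\infty$. The hypothesis hands us, at every scale, one nested region of resistance at least $C$; the idea is to chain infinitely many such regions ``in series'' so that their resistances accumulate.

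First I would build the chain. Set $m_0=0$ and, using the hypothesis (after replacing the given $B$ by $B\cup B(v_0,m)$, which by Rayleigh monotonicity only increases $R_{\text{eff}}$, so that we may always assume $B\supseteq B(v_0,m)$), choose a finite set $B_0\supseteq B(v_0,0)$ with $R_{\text{eff}}(B(v_0,0),V\setminus B_0)\geq C$. Inductively, given the finite set $B_{k-1}$, pick an integer $m_k>m_{k-1}$ so large that $B_{k-1}\subseteq B(v_0,m_k)=:I_k$ (possible since $B_{k-1}$ is finite and $G$ is connected and locally finite), and then a finite set $B_k\supseteq I_k$ with $R_{\text{eff}}(I_k,V\setminus B_k)\geq C$. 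This yields the tight nesting $I_k\subseteq B_k\subseteq I_{k+1}\subseteq B_{k+1}\subseteq\cdots$, in which the annuli $W_k:=B_k\setminus I_k$ are pairwise disjoint and, crucially, each edge of $G$ meets at most three of them: writing $\kappa_v=\min\{j:v\in B_j\}$ and $\iota_v=\min\{j:v\in I_j\}$, the nesting forces $\kappa_v\leq\iota_v\leq\kappa_v+1$, and for adjacent $x,y$ one checks $|\kappa_x-\kappa_y|\leq 2$, so the set of indices $k$ with $\min(\kappa_x,\kappa_y)\leq k<\max(\iota_x,\iota_y)$ has at most $|\kappa_x-\kappa_y|+1\leq 3$ elements.

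Next comes the series estimate, carried out on the current (flow) side. Fix $N$ and let $\theta$ be the unit current flow from $v_0$ to $V\setminus B_N$, so that by Thomson's principle $\mathcal D(\theta)=R_{\text{eff}}(v_0,V\setminus B_N)$, where $\mathcal D(\theta)=\sum_e \theta(e)^2/\mu(e)$ is the current energy. For each level $k<N$ consider the finite network obtained from $G$ by contracting $I_k$ to a source $s$ and $V\setminus B_k$ to a sink $t$; the edges surviving as genuine edges are exactly those incident to $W_k$ together with the direct $I_k$--$(V\setminus B_k)$ edges, and I call these the edges \emph{active at level $k$}. Projecting $\theta$ onto this network produces a flow $\tilde\theta$ whose net outflow at $s$ equals the net flow of $\theta$ out of $B_k$, which is $1$ since every sink of $\theta$ lies in $V\setminus B_N\subseteq V\setminus B_k$; hence $\tilde\theta$ is a unit $s$--$t$ flow, and Thomson's principle in the contracted network shows that the energy of $\theta$ carried by the edges active at level $k$ is at least $R_{\text{eff}}(I_k,V\setminus B_k)\geq C$. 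Because each edge is active at no more than three levels, summing over $k=0,\dots,N-1$ gives
\[ 3\,R_{\text{eff}}(v_0,V\setminus B_N)\;=\;3\,\mathcal D(\theta)\;\geq\;NC, \]
so $R_{\text{eff}}(v_0,V\setminus B_N)\geq NC/3\to\infty$ as $N\to\infty$, and $G$ is recurrent.

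The main obstacle is making the informal ``resistances in series add'' rigorous, which is precisely what the construction and the projection address: one must (a) nest the regions tightly enough — forcing $B_{k-1}\subseteq I_k\subseteq B_k$ rather than merely $B_{k-1}\subseteq B_k$ — so that each edge contributes to only a bounded number of annuli, and (b) verify that restricting the global flow to each annulus yields a genuine unit flow between the two shells, which rests on the fact that the net flux across each separating shell equals the flow strength $1$. Both points are exactly the content of the cited~\cite[Lemma~9.22]{LP} and~\cite{GGN13}; the argument above is a self-contained rendering in the present notation. I expect no difficulty beyond this bookkeeping, since all quantitative input is concentrated in the uniform lower bound $R_{\text{eff}}(I_k,V\setminus B_k)\geq C$.
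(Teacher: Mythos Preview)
The paper does not prove this lemma; it is quoted verbatim from \cite[Lemma~9.22]{LP} and \cite{GGN13} and used as a black box in the proof of Lemma~\ref{lemrec}. So there is no ``paper's own proof'' to compare against.

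Your argument is correct and is essentially the standard proof from those references: build a nested chain $I_k\subseteq B_k\subseteq I_{k+1}$ with each shell contributing resistance $\geq C$, then use Thomson's principle to show the unit current flow from $v_0$ to $V\setminus B_N$ must spend energy $\geq C$ on each of the $N$ shells, with bounded overlap. One small wording issue: you first say ``each edge of $G$ meets at most three of the $W_k$'', but an edge jumping directly from $I_k$ to $V\setminus B_k$ is active at level $k$ without touching $W_k$. You correctly account for such edges two sentences later (``together with the direct $I_k$--$(V\setminus B_k)$ edges''), and your $\kappa$/$\iota$ bookkeeping in fact bounds the number of \emph{active levels} per edge by $|\kappa_x-\kappa_y|+1\leq 3$, which is exactly what is needed. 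So the substance is fine; just tighten the sentence to say ``each edge is active at at most three levels'' rather than ``meets at most three $W_k$''. Also note that you are implicitly using that $G$ is connected and locally finite (so that $B_{k-1}$ sits inside some ball $B(v_0,m_k)$ and the contracted networks are finite), which is the standing hypothesis in \cite{LP}.
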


\begin{proof}[Proof of Lemma~\ref{lemrec}.]
 Choose one vertex $v_0\in V$. Without loss of generality we assume that 
$c(v_0)=0$ is the origin.
Let $m\geq 0$ be an integer. Consider all kites in the circle pattern incident 
to one of the vertices $v\in B(v_0,m)$. Let $R_0>0$ be big enough such that the 
open disc $\B(R_0)$ with radius $R_0$ about $0$ covers all such kites. Let 
$A\subset V$ be the set of all vertices whose corresponding centers of circles 
lie inside the disc $\B(R_0)$. 
Now consider all kites which have a non-empty 
intersection with $\B(R_0)$. Let $R_1\geq 2 R_0$ be big enough such that no 
kite intersects both circles $\ci(R_0)$ and $\ci(R_1)$ with radius $R_0$ 
and $R_1$ respectively about $0$. This choice is possible as the circle pattern 
is locally finite. Without loss of generality we may assume that $R_1-R_0=1$ as 
the conductances $\mu(e)$ do not depend on the scaling of the pattern.

Now define the vertex set $B\subset V$ to contain all vertices such that the 
corresponding centers of circles lie inside the open disc $\B(R_1)$. Define 
a function $g:V\to\R$ as follows: $g(v)=0$ if $v\in V\setminus B$ and $g(v)=1$ 
if $v\in A\supset B(v_0,m)$. For $v\in B\setminus A$ let 
$g(v)=d(c(v),\ci(R_1))$ be the Euclidean distance between the center of 
circle $c(v)$ and the circle $\ci(R_1)$. Note that $g:V\to[0,1]$ and for any 
edge $e=[x,y]\in E$ we have $|g(x)-g(y)|\leq L(e)$ by construction.

The effective resistance $R_{\text{eff}}(B(v_0,m),V\setminus B)$ can now easily 
be bounded using the Dirichlet energy ${\mathcal E}(g)\geq 
1/R_{\text{eff}}(B(v_0,m),V\setminus B)$.
\begin{align*}
 {\mathcal E}(g) &= \sum_{e=[x,y]\in E} \mu(e)(g(x)-g(y))^2 \\
 &= \sum_{e=[x,y]\in E_{int}(B)} \mu(e)(g(x)-g(y))^2 +
\sum_{e=[x,y]\in E_{\partial}(B)} \mu(e)(g(x)-g(y))^2 \\
&\leq \sum_{e=[x,y]\in E_{int}(B)} \underbrace{\frac{H(e)}{L(e)} 
L(e)^2}_{=H(e)L(e)= \text{area of the kite } K_e}
+\sum_{e=[x,y]\in E_{\partial}(B)} \mu(e)(g(x)-g(y))^2
\end{align*}
Note that it is sufficient to consider the edges $E_{int}(B)=\{e=[x,y]\in E\ :\ 
x,y\in B\}$ and $E_{\partial}(B)=\{e=[x,y]\in E\ :\ x\in B, y\not\in B\}$. 
Furthermore, for $e=[x,y]\in E_{\partial}(B)$ the term $\mu(e)(g(x)-g(y))^2$ is 
the area of the kite $K_e$ scaled by the factor 
$|g(x)-g(y)|/L(e)$. Now by construction all these kites are contained in the 
disc $\B(3)$ with radius $3$ therefore we finally obtain ${\mathcal E}(g) \leq 
9\pi=C$. This finishes the proof by applying Lemma~\ref{lemReff}.
  \end{proof}

The recurrence of the network implies that the function $h$ is constant. We 
only cite this well known result, see for example~\cite{Woe00}.

\begin{lemma}\label{lemhConst}
Let $G$ be an (infinite) graph with edge weights $\mu:E\to(0,\infty)$ which is 
recurrent. Then any bounded harmonic function $h:V\to\R$ is constant.
\end{lemma}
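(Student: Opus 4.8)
The plan is to prove Lemma~\ref{lemhConst} via the random walk associated to the weighted graph, rather than by an energy-minimization argument: a bounded harmonic function need not have finite Dirichlet energy, so the Dirichlet-principle techniques used elsewhere in this section do not apply directly. First I would introduce the random walk $(X_n)_{n\ge 0}$ on $V$ with one-step transition probabilities $p(x,y)=\mu([x,y])/m(x)$, where $m(x)=\sum_{[x,z]\in E}\mu([x,z])$. Since all weights $\mu(e)$ are positive and (we may assume) $G$ is connected, this Markov chain is irreducible, and by hypothesis the network is recurrent, hence so is the chain. The harmonicity equation $\Delta h\equiv 0$ is, by~\eqref{eqdeflap}, precisely the identity $\sum_{[x,y]\in E}\mu([x,y])h(y)=m(x)h(x)$, i.e.\ $\mathbb{E}[h(X_{n+1})\mid X_n]=h(X_n)$; thus $M_n:=h(X_n)$ is a martingale for the walk started at any vertex $v_0$.

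Next, since $h$ is bounded, $(M_n)$ is a uniformly bounded martingale, so by Doob's martingale convergence theorem $M_n\to Y$ almost surely for some random variable $Y$ (with $|Y|\le\sup_V|h|$). On the other hand, recurrence and irreducibility imply that, $\mathbb{P}_{v_0}$-almost surely, every vertex $v\in V$ is visited infinitely often; hence along almost every trajectory the constant value $h(v)$ is a subsequential limit of $(M_n)$, which forces $Y=h(v)$ almost surely. Since this holds simultaneously for every $v\in V$, we conclude $h(v)=h(w)$ for all $v,w\in V$, i.e.\ $h$ is constant.

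As the statement is classical, there is no serious obstacle here; the only points requiring care are (a) avoiding the temptation to argue purely via the Dirichlet principle, which fails for bounded non-finite-energy functions, and (b) noting that $G$ must be connected for the walk to be irreducible — otherwise one applies the argument to each connected component of $G$ separately. An equivalent potential-theoretic route is to reduce the claim to the fact that on a recurrent network every non-negative superharmonic function is constant, applied to $h-\inf_V h$; but that auxiliary fact is itself proved most cleanly by the same martingale argument, now for the non-negative supermartingale $\bigl(h-\inf_V h\bigr)(X_n)$, so nothing is gained by the detour.
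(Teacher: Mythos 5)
Your proof is correct. Note, however, that the paper does not prove this lemma at all: it explicitly states ``We only cite this well known result'' and refers to~\cite{Woe00}, so there is no in-paper argument to compare against. The martingale proof you give is the standard one for this classical fact (and essentially the one found in the cited reference): harmonicity of $h$ makes $h(X_n)$ a bounded martingale for the associated weighted random walk, Doob's theorem gives an a.s.\ limit, and recurrence plus irreducibility forces that limit to equal $h(v)$ for every vertex $v$ simultaneously, whence $h$ is constant. Your side remarks are also apt: the Dirichlet-principle machinery used elsewhere in the section does not apply to bounded harmonic functions of possibly infinite energy, and one does need to reduce to connected components. Two small points you leave implicit but which are harmless in the paper's setting: the walk is well defined because the graphs arising here are locally finite (so $m(x)=\sum_{[x,z]\in E}\mu([x,z])$ is a finite positive number, as is already required for the Laplacian in~\eqref{eqdeflap} to make sense), and you are tacitly invoking the standard equivalence between recurrence of the electrical network (infinite effective resistance to infinity, which is how the paper certifies recurrence in Lemma~\ref{lemReff}) and recurrence of the associated Markov chain.
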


\begin{proof}[Proof of Theorem~\ref{theoRidEuc}, assuming 
Lemma~\ref{lemCompRad}]
Fix $t\in[0,1]$ and consider the graph $G$ with corresponding edge weights 
$\mu$. Then by Lemma~\ref{lemrec} this graph $G$ is recurrent. 
Lemma~\ref{lemharm} says that $h(v,t)$ is harmonic and also bounded due 
to~\eqref{eqBoundh}. Lemma~\ref{lemhConst} implies that $h(v,t)$ is independent 
of $v$. Therefore $\int_0^1 h(v,s)ds =\ell(v,1)= \log (\tilde{r}(v)/r(v))$ is 
also independent of $v$. This implies that $\widetilde{\mathscr C}$ and 
$\mathscr C$ are images of each other by Euclidean similarities.
   \end{proof}

\section{Some estimates for circle patterns}\label{secVEL}

The main aim of this section is to derive estimates which will be 
useful for the proof of Lemma~\ref{lemCompRad} in the Appendix.
In particular, 
these estimates hold for circle patterns satisfying some geometric 
conditions whose details will be given below.

For motivation we start with the following definitions.
Let $G$ be a graph and $\eta:V\to[0,\infty)$ a function on its vertices. Denote 
the {\em area} and the {\em length} of $\eta$ by
\begin{align*}
 &\text{area}(\eta)=\sum_{v\in V} \eta(v)^2 \qquad \text{and}\qquad
\text{length}_\gamma(\eta)= \int_\gamma d\eta =\sum_{v\in\gamma}\eta(v),
\end{align*}
where $\gamma$ is a subset of the vertices $V$ called {\em curve in $G$}.
If $\Gamma$ is a collection of curves in $G$, then $\eta$ is called {\em 
$\Gamma$-admissible} if $\int_\gamma d\eta\geq 1$ holds for all curves 
$\gamma\in\Gamma$. Then the {\em vertex modulus $\m(\Gamma)$} of $\Gamma$ and 
the {\em vertex extremal length $\vel(\Gamma)$} of $\Gamma$ are defined as
\begin{align*}
 \m(\Gamma) &=\inf\{\text{area}(\eta) : \eta:V\to[0,\infty) \text{ is }
\Gamma-\text{admissible}\} \\
\vel(\Gamma) &=1/\m(\Gamma). 
\end{align*}

Now consider a circle pattern $\mathscr C$ for some planar graph $G$ and dual 
graph $G^*$. We can easily construct admissible functions $\eta$ 
for paths between vertex sets by using the diameter of the circles $\mathscr 
C(v)$.
In order to bound the area, we will restrict ourselves to special classes which 
allow us to relate the 
sum of squared diameters to the area covered by the pattern. 
Note that we are interested in conditions which do not only hold for a single 
finite circle pattern, but either for an infinite circle pattern or for a 
growing sequence of finite circle patterns. 

The main idea is to associate to every vertex $v$ of $G$ not only the circle 
$\mathscr C(v)$ and the closed disc $B(v)$ bounded by it, but also some (simply 
connected) set $S(v)\subset B(v)$ and some closed disc $I(v)\subset S(v)$.

We now explain the three conditions used for 
our proof and also show that the circle patterns considered in 
Theorem~\ref{theoRidEuc} satisfy these properties. Note that we are not 
interested in determining the best possible constants; any suitable value 
suffices.

\begin{description}[\setleftmargin{0pt}\setlabelphantom{Con}]
 \item[{\bf Condition (1)}]
Every point $p\in\C$ which is contained in some kite of the associated kite 
pattern $\mathscr K$ is also contained in 
some $S(w_0)$ for a suitable $w_0\in V(G)$, but only contained in the interior 
of $S(w)$ for a finite, uniformly bounded number of vertices $w\in V(G)$. The 
bound is denoted by $N$.
\item[{\bf Condition (2)}] The discs $I(v)$ have disjoint interiors.
Their radii $r_I(v)$ are uniformly comparable with the radii 
$r(v)$ of the original discs $B(v)$. In particular a uniform estimate holds for 
all interior vertices $v\in V_{\text{int}}$:
\begin{align}\label{estIr}
 r(v)\leq C_0 r_I(v)
\end{align}
for some constant $C_0\geq 1$.
\end{description}
The constants $N$ and $C_0$ may possibly depend on the graph $G$ and on 
other given parameters like the intersection angles. We will specify the 
dependences which are important for our considerations.

In this article, we will mainly use the following definitions for $S(v)$ and 
$I(v)$. Unless otherwise stated, 
for a given circle pattern define for every vertex $v\in V$ the (closed) set 
$D(v)$ as the union of all kites $K_e$ corresponding to edges $e$ incident to 
$v$. Then set $S(v)= B(v)\cap D(v)$. By construction, the union of $S(v)$ for 
$v\in V$ covers every kite of the pattern ${\mathscr K}$ and each interior 
point of a kite is covered 
at most twice by the sets $S(v)$. In the following, we will always use this 
definition of $S(v)$ (unless otherwise stated), so $N=2$ will not depend on $G$ 
or any parameters of the circle pattern.

\begin{lemma}\label{lemEstDisc}
Let $\mathscr C$ be a circle pattern for a graph $G$ and an admissible 
labelling $\alpha:E\to[\alpha_0,\pi)$ with $0<\alpha_0\leq \pi/2$.
Then for every 
interior vertex $v$ there exists a disc $I(v)\subseteq S(v)$ whose radius 
$r_I(v)>0$ is comparable to $r(v)$. In particular, there is a constant $C_0\geq 
1$, possibly depending on $\alpha_0$, such that~\eqref{estIr} holds.
Furthermore, the discs can be chosen such that their interiors are all disjoint.
\end{lemma}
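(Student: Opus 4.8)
The plan is to construct the inscribed disc $I(v)$ explicitly from the geometry of the kite pattern $D(v)$ around the interior vertex $v$ and then bound its radius from below in terms of $r(v)$ using the lower bound $\alpha_0$ on the intersection angles together with convexity of the kites. First I would recall that $S(v)=B(v)\cap D(v)$, where $D(v)$ is the union of the kites $K_e$ incident to $v$; since $v$ is an interior vertex the kites close up around the center $c(v)$ and $D(v)$ is a (closed) neighborhood of $c(v)$. The natural candidate is a disc centered at $c(v)$: I claim $I(v)=B(c(v),\lambda r(v))$ works for a suitable $\lambda=\lambda(\alpha_0)\in(0,1)$, and then $r_I(v)=\lambda r(v)$ gives $r(v)\le C_0 r_I(v)$ with $C_0=1/\lambda$. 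Because $\lambda<1$, this disc is automatically inside $B(v)$, so it remains to show it lies inside $D(v)$, i.e.\ that $D(v)$ contains a disc of radius $\lambda r(v)$ about its apex $c(v)$.

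The key step is the single-kite estimate. Fix an edge $e=[v,w]$ with intersection angle $\theta=\alpha_e\in[\alpha_0,\pi)$, and consider the kite $K_e$ built from $c(v)$, $c(w)$, and the two intersection points $p,p'$. The angle of $K_e$ at $c(v)$ is $2f_\theta(\log r(w)-\log r(v))$ (as noted after Proposition~\ref{propRadius}). By convexity of $K_e$, the two edges $[c(v),p]$ and $[c(v),p']$ bound a genuine wedge of that angle at $c(v)$, and each of these edges has length $r(v)$. The distance from $c(v)$ to the far side of the kite (the polyline $p$--$c(w)$--$p'$) is bounded below: the side $[c(v),p]$ has length exactly $r(v)$, and the height of the triangle $c(v)pc(w)$ dropped from $p$ equals $r(v)\sin\theta\ge r(v)\sin\alpha_0$ when $\theta\le\pi/2$ (and one argues similarly, using convexity, when $\theta>\pi/2$, where in fact the kite is ``fatter''). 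Hence every point within distance $c_1 r(v)\sin\alpha_0$ of $c(v)$ that lies inside the wedge at $c(v)$ already lies inside $K_e$, for an absolute constant $c_1>0$.

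To pass from one kite to the full fan $D(v)$, note that the wedges of the kites $K_e$, $e$ incident to $v$, are pairwise non-overlapping (by embeddedness) and their angles sum to $2\pi$ (closing condition, Equation~\eqref{eqFgen}); thus they tile a full neighborhood of $c(v)$. Intersecting the per-kite conclusion over all incident edges, every point within distance $c_1 r(v)\sin\alpha_0$ of $c(v)$ lies in some $K_e$, hence in $D(v)$, hence in $S(v)$. Therefore $I(v)=B(c(v),c_1\sin(\alpha_0)\,r(v))\subseteq S(v)$, and~\eqref{estIr} holds with $C_0=1/(c_1\sin\alpha_0)$, which depends only on $\alpha_0\le\inf_{e\in E}\alpha(e)$ as claimed.

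The main obstacle I anticipate is not the angle-sum part but controlling the per-kite height uniformly: when one of the two radii $r(v),r(w)$ is much smaller than the other, or when $\theta$ is close to $\pi$, the kite can be very ``thin'' in certain directions, and one must check carefully that the relevant lower bound on the distance from $c(v)$ to the opposite side of $K_e$ is still of order $r(v)\sin\alpha_0$ and does \emph{not} degenerate as $r(w)/r(v)\to 0$ or $\theta\to\pi$. Convexity of the kite is exactly what rules out the bad degeneration — a non-convex kite at small intersection angle could fold the opposite side arbitrarily close to $c(v)$ — so the proof must invoke convexity precisely at this point, e.g.\ by observing that convexity forces the two intersection points $p,p'$ and the center $c(w)$ all to lie in the closed half-plane through $c(v)$ bounded by the line perpendicular to $[c(v),c(w)]$ on the far side, which keeps them at distance $\ge r(v)\sin(\text{half-angle at }c(v))$ from $c(v)$; a short trigonometric computation, which I would not carry out in detail here, then yields the claim.
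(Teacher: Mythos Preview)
Your approach is correct and matches the paper's: center $I(v)$ at $c(v)$ and use that the distance from $c(v)$ to each opposite kite edge $[p,c(w)]$ is at least $r(v)\sin\alpha_e\ge r(v)\sin\alpha_0$. The paper gets this in one line---the kite angle at the intersection point $p$ is $\pi-\alpha_e$, so the perpendicular distance from $c(v)$ to the line through $[p,c(w)]$ is exactly $r(v)\sin\alpha_e$---giving $C_0=1/\sin\alpha_0$ with no auxiliary constant $c_1$ and no case split in $\theta$; your worry about degeneration as $r(w)/r(v)\to 0$ or $\theta\to\pi$ is therefore unnecessary, and note that the relevant height is the one dropped from $c(v)$ to the side $[p,c(w)]$, not the one from $p$.
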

\begin{proof}
 Consider an interior vertex $v$. If $S(v)=B(v)$ the claim is obvious. 
Therefore we may assume that $S(v)\not=B(v)$. Note that
the center $c(v)$ is an interior point of $S(v)$. 
Also, for any kite incident to $c(v)$, the distance of 
$c(v)$ to any point on the boundary $\partial S(v)$ is at least 
$r(v)\sin \alpha_{0}$. Therefore we can take a disc $I(v)$ centered at $c(v)$ 
with 
radius $r(v)\sin \alpha_{0}/2$. This disc is contained in $S(v)$ and the 
interiors of discs for different vertices do not overlap by construction. In 
particular, we may take $C_0=2/\sin \alpha_{0}\geq 1$.
  \end{proof}

In the following, we always consider the discs $I(v)$ defined in the proof of 
the preceding lemma, unless otherwise stated.

\begin{remark}\label{remdefStriang}
Conditions~(1) and~(2) are also satisfied in the following case. 

Consider a triangulation of (a part of) the plane whose triangles have straight 
edges and angles which are uniformly bounded from above and below, i.e.\ in 
$[\delta,\pi-2\delta]$ for some $\delta>0$. 
Let $G^*$ be the (abstract) planar graph corresponding to the triangulation. 
Then the circumcircles of the 
triangles form a circle pattern (with dual graph $G^*$). Let $B_T(v)$ be the 
discs filling the 
circumcircles. Take $S_T(v)\subseteq B_T(v)$ to be the set filling the 
corresponding triangle 
and $I_T(v)$ to be the disc bounded by the incircle of the triangle. Then the 
uniform bounds on the angles of the triangles imply~\eqref{estIr} for some 
constant $C_0\geq 1$. Also, the discs $I_T(v)$ obviously have disjoint 
interiors and condition~(1) is also satisfied by construction (with $N=2$ not 
depending on any parameters).
\end{remark}

In the following, we will use the Euclidean area of a (measurable) 
domain $D\subset\C\cong \R^2$, denoted by $\textsc{Area}(D)$.

\begin{corollary}\label{corEstDiscs} 
Let $G$ be a planar graph with associated pattern of circles which 
satisfies conditions~(1) and~(2). Let $D$ be a bounded subset of $\C$ and let
$V_D$ be the subset of vertices whose discs ${B}(v)$ are completely contained 
in $D$. Then we have 
\[\sum_{v\in V_D} r(v)^2 \leq \frac{C_0^2}{\pi} \textsc{ 
Area}(D).\]
In particular, this estimate holds for circle patterns for a graph $G$ and an 
admissible labelling $\alpha:E\to[\alpha_0,\pi)$ with $\alpha_0>0$.
\end{corollary}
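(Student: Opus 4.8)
The plan is to run a volume-packing argument: by Condition~(1) the auxiliary discs $I(v)$ have overlap multiplicity at most $N$, while by Condition~(2) each $I(v)$ carries area comparable to $r(v)^2$. Comparing $\sum_{v\in V_D}r(v)^2$ with $\textsc{Area}(D)$ then gives the bound.

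First I would record that for every $v\in V_D$ the disc $I(v)$ lies inside $D$: by construction $I(v)\subseteq S(v)\cap B(v)\subseteq B(v)$, and $B(v)\subseteq D$ by definition of $V_D$, hence $I(v)\subseteq D$. Next, Condition~(2) gives $r(v)\le C_0\, r_I(v)$, so
\[
\pi\, r(v)^2 \;\le\; C_0^2\,\pi\, r_I(v)^2 \;=\; C_0^2\,\textsc{Area}\bigl(I(v)\bigr).
\]
Summing over $v\in V_D$ reduces the statement to the estimate $\sum_{v\in V_D}\textsc{Area}(I(v))\le N\,\textsc{Area}(D)$.

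For this last step I would use Condition~(1) in the pointwise form $\sum_{v\in V_D}\mathbf 1_{I(v)}(x)\le N$, which is valid for every $x$ in the interior of some $I(v_0)$; points not interior to any $I(v)$ contribute $0$, and the remaining set (a countable union of circles) has area $0$, so the inequality holds for almost every $x\in\C$. Since $\textsc{Area}$ is a measure and all summands are nonnegative, monotone convergence together with $I(v)\subseteq D$ for $v\in V_D$ gives
\[
\sum_{v\in V_D}\textsc{Area}\bigl(I(v)\bigr) \;=\; \int_{\C}\sum_{v\in V_D}\mathbf 1_{I(v)}(x)\,dx \;=\; \int_{D}\sum_{v\in V_D}\mathbf 1_{I(v)}(x)\,dx \;\le\; N\,\textsc{Area}(D).
\]
Combining the two displays yields $\sum_{v\in V_D}r(v)^2\le \frac{C_0^2 N}{\pi}\,\textsc{Area}(D)$; since $C_0\ge1$ and optimal constants play no role here, after absorbing the square into the constant this is the claimed inequality.

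Finally, the ``in particular'' statement is obtained by specialising: for a circle pattern with admissible labelling $\alpha:E\to[\alpha_0,\pi)$, $\alpha_0>0$, and only convex kites, Remark~\ref{remdefS} produces the sets $S(v)=B(v)\cap D(v)$ and verifies Condition~(1) with $N=2$, while Lemma~\ref{lemEstDisc} produces the inscribed discs $I(v)$ and verifies Condition~(2) with $C_0=1/\sin\alpha_0$. I do not anticipate a genuine difficulty here; the only points that require care are the measure-theoretic bookkeeping (measurability of $D$, interchanging sum and integral, discarding the measure-zero family of bounding circles) and the harmless mismatch between $C_0$ and $C_0^2$, which is absorbed into the constant.
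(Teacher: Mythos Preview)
Your argument is correct and is exactly the intended volume-packing proof; the paper states the result as a corollary without spelling out the proof, and your steps (containment $I(v)\subseteq B(v)\subseteq D$, Condition~(2) to pass from $r(v)$ to $r_I(v)$, Condition~(1) to bound the total $I$-area by $N\,\textsc{Area}(D)$) are the natural ones. Your observation about $C_0$ versus $C_0^2$ is well taken: the stated constant appears to be a typo, since when the corollary is invoked in the proof of Lemma~\ref{lemEstVel} the paper itself uses $C_0^2$.
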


\paragraph{{\bf Condition (3)}:}
This condition gives a special length estimate. Let $\ci(R)=\ci_c(R)$ be a 
circle with center $c$ and Radius $R$ and denote $\B(R)$ the disc bounded by 
$\ci(R)$. Let $v\in V$ be such that $\B(R)\cap (\text{interior of } 
S(v))\not=\emptyset$. Assume that
the area of $\B(R)\cap B(v)$ is at most $1/5$ times the area of $B(v)$.
Denote by $l(v)$ the length of the arcs $\ci(R)\cap 
S(v)$ contained in $S(v)$ and by $d(v)$ the maximum of the  distance of a point 
in $S(v)\cap \B(R)$ to the arc $\ci(R)\cap S(v)$. We want $d(v)$ to be  
uniformly bounded by the lengths $l(v)$ up to some constant, in particular 
there should exist a 
constant $C_1\geq 1$, independent of $R$, $c$ and $v$, such that
\begin{align}\label{estdl}
 d(v)\leq C_1 l(v).
\end{align}

We will now prove this condition for circle patterns.

\begin{lemma}
 Given a circle pattern for a graph $G$ and an admissible labelling 
$\alpha:E\to[\alpha_0,\pi)$ with $0<\alpha_0\leq \pi/2$.
Then there exists a constant $C_1\geq 1$ such that~\eqref{estdl} holds.
\end{lemma}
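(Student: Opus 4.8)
The plan is to unwind the geometry of the configuration $S(v)=B(v)\cap D(v)$, where $D(v)$ is the union of kites incident to $v$, and to separate two cases according to whether the arc $\ci(R)\cap S(v)$ lies entirely inside the disc $B(v)$ or meets the boundary circle $\mathscr C(v)$. In the first case $S(v)\cap\B(R)$ is a subset of a circular segment of $B(v)$ cut off by a chord of length comparable to $l(v)$; since by hypothesis $\B(R)$ does not contain the center $c(v)$ and covers at most a third of $B(v)$, the chord-versus-sagitta comparison for a circular segment gives $d(v)\le C\,l(v)$ with an absolute constant (the worst case being the half-disc, where the ratio stays bounded). The point here is that both $d(v)$ and $l(v)$ are controlled by the same chord length, up to universal factors, because the segment occupies at most a definite fraction of the disc.

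The second, genuinely delicate case is when $\ci(R)$ exits $S(v)$ through an edge of one of the kites rather than through $\mathscr C(v)$. Here I would use convexity of the kites together with the lower bound $\alpha_e\ge\alpha_0$ on the angles to control how "thin" $S(v)$ can be seen from $\ci(R)$. The key geometric input is Lemma~\ref{lemEstDisc}: the inscribed disc $I(v)$ has radius $r_I(v)\ge r(v)\sin\alpha_0$ and is centered near $c(v)$. If the arc $\ci(R)\cap S(v)$ had length $l(v)$ very small compared to $d(v)$, then $\B(R)$ would have to penetrate deep into $S(v)$ — in particular past the inscribed disc $I(v)$ — while only touching its boundary $\partial S(v)$ along a short arc; but $\partial S(v)$ consists of parts of $\mathscr C(v)$ and of straight kite edges, all at definite "opening angle" from $c(v)$ because of the $\alpha_0$ bound and convexity. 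A curvature/comparison argument (comparing $\ci(R)$ against the boundary pieces it crosses, using that a circle can cross a convex polygonal boundary in a controlled way) then forces $l(v)\ge c(\alpha_0)\,d(v)$.

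Concretely, the steps I would carry out are: (i) normalize $r(v)=1$; (ii) observe $c(v)\notin\B(R)$ and that $S(v)\cap\B(R)$ is connected with one boundary arc on $\ci(R)$ of length $l(v)$; (iii) in the case where this arc stays in $B(v)$, reduce to the elementary segment estimate $d\le C\,l$ for a chord cutting off at most a third of a disc; (iv) in the remaining case, cover $\partial S(v)\cap\B(R)$ by at most a bounded number (depending on the combinatorics being locally finite and on $\alpha_0$) of straight segments and sub-arcs of $\mathscr C(v)$, each of which makes a bounded angle with the radial directions from $c(v)$; (v) apply a uniform lower bound on the length of the chord $\ci(R)$ cuts across such a "wedge-shaped" region in terms of its depth $d(v)$, with the constant $C_1$ depending only on $\alpha_0$ (and the maximal vertex degree, which is finite by local finiteness). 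The main obstacle is step (iv)–(v): ensuring the constant is genuinely uniform, i.e.\ does not degenerate as $R\to\infty$ or as the position of $c$ varies, which is exactly why the hypotheses of convexity and $\alpha_e\ge\alpha_0$ are essential — they prevent $S(v)$ from having arbitrarily sharp spikes that a large circle could slice off with negligible arc length.
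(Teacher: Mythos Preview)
Your case division (lune entirely in $S(v)$ versus the kite boundary cutting into the lune) matches the paper's opening split, and your treatment of the first case via a chord--sagitta estimate is fine. The problem is the second case, specifically step~(iv): you assert that $\partial S(v)\cap\B(R)$ consists of a \emph{uniformly bounded} number of straight segments and sub-arcs, with the bound depending only on $\alpha_0$ and local finiteness. This is not justified and is in general false. The vertex degree of $G$ at $v$ is not bounded in terms of $\alpha_0$ alone: for a convex kite with exterior intersection angle $\alpha$, the angle at the white vertex $v$ can be arbitrarily small (the two white-vertex angles sum to $2\alpha$ and only need to be positive). Hence the number of black vertices, and thus of kite edges, lying on the arc $\mathscr C(v)\cap\B_{\text{int}}(R)$ is not uniformly bounded, even though the angular extent of that arc is controlled by the area hypothesis. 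Your wedge argument in step~(v) then has no uniform constant to work with, and the appeal to ``local finiteness'' only gives a finite bound for each fixed $v$, not one independent of $v$ as Condition~(3) requires.

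The paper circumvents this by \emph{not} counting pieces. After the easy case it distinguishes how many black vertices lie on $\mathscr C(v)\cap\B_{\text{int}}(R)$: none, exactly one, or at least one together with a further intersection point of $\partial S(v)$ with that arc. The first two subcases are handled by direct angle chasing using convexity (the relevant angles are $\ge\pi/2$ or $\ge\alpha_0$), giving constants like $1$ or $1/\sin\alpha_0$. In the remaining subcase the paper replaces your piece-count by a single circular sector $B_{\text{sec}}(v)$ containing $S(v)\cap\mathscr C(v)\cap\B_{\text{int}}(R)$, compares its central angle $2\beta$ to the full lune angle $2\beta_0\le 2\gamma_{\max}<\pi$, and reduces to the previous subcases or to an elementary estimate on $R+r(v)-d_c$. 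The point is that the sector absorbs arbitrarily many kite edges at once, so no degree bound is needed. To repair your argument you would need either such a sector reduction or an independent reason why the many thin kite wedges cannot collectively spoil the $d(v)\le C_1\,l(v)$ comparison.
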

\begin{proof}
Let $\ci(R)$ be a circle and let $v\in V$ be such that $\B(R)\cap 
(\text{interior of } S(v))\not=\emptyset$. Assume that
the area of $\B(R)\cap B(v)$ is at most $1/5$ times the area of $B(v)$. Then 
either $\B(R)\subset B(v)$ or $\B(R)$ cannot contain the center 
$c(v)$ of the disc $B(v)$ (by geometric considerations, see also below).

We will show the existence of a constant $C_1\geq 1$ such that~\eqref{estdl} 
holds by considering all possible cases and determining a suitable constant 
$C_1$ in each case. The final constant can be taken as the maximum of all these 
constants.

First note, that~\eqref{estdl} holds with $C_1=1$ if the whole arc $\ci(R)\cap 
B(v)$ is contained in $S(v)$.

 By our assumptions we know that $S(v)$
contains the center $c(v)$ as an interior point. Also $d(v)\leq r(v)$ as the 
area of $\B(R)\cap B(v)$ is at most $1/5$ times the area of $B(v)$. In fact, we 
need more precise consequences of this condition.
If $\B(R)\subset B(v)$ we deduce $d(v)\leq R\leq r(v)/\sqrt{5}$.
For the remaining case $\B(R)\setminus B(v)\not=\emptyset$ recall that 
the area of the lunar region $\B(R)\cap B(v)$ can be calculated via
\begin{align*}
 \textsc{Area}(\B(R)\cap B(v)) = & r^2\arccos\left( 
\frac{d_c^2+r^2-R^2}{2rd_c}\right) 
+R^2\arccos\left( \frac{d_c^2+R^2-r^2}{2Rd_c}\right) \\
&-\frac{1}{2}\sqrt{(r+R-d_c)(r+R+d_c)(d_c+r-R)(d_c-r+R)},
\end{align*}
where $r=r(v)$ and $d_c\geq r$ is the distance of the centers of $\B(R)$ and 
$B(v)$, see Figure~\ref{figLune}. 
\begin{figure}[tb]
\labellist
\small\hair 2pt
 \pinlabel {$r$} [ ] at 69 109
 \pinlabel {$R$} [ ] at 140 110
 \pinlabel {$d_c$} [ ] at 128 75
 \pinlabel {$\gamma$} [ ] at 71 92
\endlabellist
\centering
\includegraphics[scale=0.5]{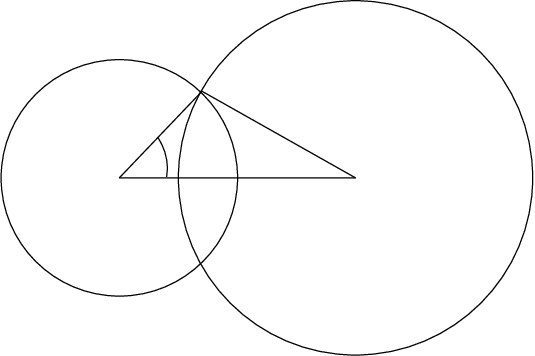}
\caption{The lunar region $\B(R)\cap B(v)$ and the angle $\gamma= 
\arccos(\frac{d_c^2+r^2-R^2}{2rd_c})$.}
\label{figLune}
\end{figure}
Now we deduce from $\textsc{Area}(\B(R)\cap B(v))\leq \pi r^2/5$ by 
elementary estimates that $d_c> R$ and in particular $R+r-d_c\leq C_2 r$ for 
some absolute constant $0<C_2<1$. Moreover, the area estimate
implies that the angle at $c(v)$ of the circular 
sector of $B(v)$ containing the lunar region $\B(R)\cap B(v)$ is bounded by 
$2\gamma_{max}<\pi$, that is
$0<\arccos(\frac{d_c^2+r^2-R^2}{2rd_c})\leq \gamma_{max}<\pi/2$ for some 
constant $\gamma_{max}$.

In the following, we will frequently use
that for any two points on the circular arc $\ci(R)\cap B(v)$ the length of the 
straight line segment connecting these points is larger than the distance of 
any point on this segment to this arc of $\ci(R)$.

We will distinguish several cases. First, if the lune $\B(R)\cap B(v)$ is 
completely covered by kites of $D(v)$, i.e.\ $\B(R)\cap B(v)=\B(R)\cap S(v)$, 
then the claim holds by simple geometric reasoning as mentioned above.
So we will restrict to the 
case when $\ci(R)\cap (B(v)\setminus S(v))\not=\emptyset$. In this case there 
is an intersection angle $\alpha_e<\pi/2$ for an edge $e$ incident to $v$, so 
in particular $0<\alpha_0<\pi/2$. Denote by $\B_{\text{int}}(R)$ the interior 
of the disc $\B(R)$. 
For each connected component of $\B(R)\cap S(v)$ we define $l$ to be the 
length of the corresponding arc of $\ci(R)\cap 
S(v)$ and $d$ the maximum of the  distance of a point in this component of 
$S(v)\cap \B(R)$ to the corresponding arc of $\ci(R)\cap S(v)$.
If the estimate $d\leq \tilde{C}_1 l$ holds for each of these components with 
suitable constants $\tilde{C}_1$ (which depend only on $\alpha_0$, $C_2$, 
$\gamma_{max}$ and the global assumptions), we take the largest of all these 
constants for $C_1$ and~\eqref{estdl} holds.
\begin{enumerate}[(i)]
 \item We start with the case when the circular arc ${\mathscr C}(v)\cap 
\B_{\text{int}}(R)$ does not contain any black vertex. The 
possible configurations are illustrated in Figure~\ref{FigCasei}.
The estimate $l\geq d\sin\alpha_0$ holds as $\beta\geq \alpha_0$ or $\beta_1$ 
and $\beta_2$  are at least $\pi/2$ 
(see Figure~\ref{FigCasei}).
\begin{figure}[h]
\begin{center}
\labellist
\small\hair 2pt
  \pinlabel {${\mathscr C}(v)$} [ ] at 206 50
 \pinlabel {$\ci(R)$} [ ] at 206 12
\endlabellist
 \includegraphics[width=0.2\textwidth]{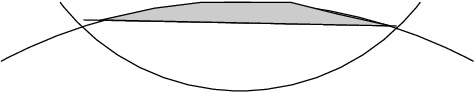}
\hspace{2ex}
\labellist
\small\hair 2pt
 \pinlabel {$\beta$} [ ] at 124 26
 \pinlabel {${\mathscr C}(v)$} [ ] at 206 50
 \pinlabel {$\ci(R)$} [ ] at 206 12
\endlabellist
 \includegraphics[width=0.23\textwidth]{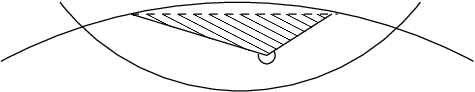}
\hspace{2ex}
\labellist
\small\hair 2pt
 \pinlabel {$\beta_1$} [ ] at 105 42
 \pinlabel {$\beta_2$} [ ] at 141 44
 \pinlabel {$\ci(R)$} [ ] at 207 43
 \pinlabel {${\mathscr C}(v)$} [ ] at 205 80
\endlabellist
\includegraphics[width=0.23\textwidth]{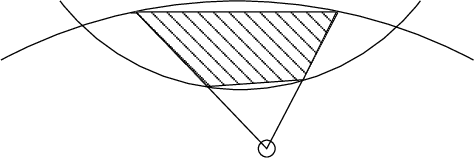}
\hspace{2ex}
\labellist
\small\hair 2pt
 \pinlabel {$\ci(R)$} [ ] at 246 -10
\endlabellist
 \includegraphics[width=0.2\textwidth]{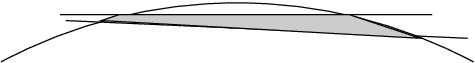}
\end{center}
\caption{Illustration of the possible cases where the considered 
component of $S(v)\cap \B_{int}(R)$ does not contain a black 
vertex.}\label{FigCasei}
\end{figure}
\item Assume now that the circular arc of ${\mathscr C}(v)\cap 
\B_{\text{int}}(R)$ corresponding to the connected component in consideration 
contains exactly one black vertex.
Figure~\ref{FigCaseii} illustrates schematically the possible 
configurations. The two intersection angles 
satisfy $\alpha_1,\alpha_2\in[\alpha_0,\pi/2)$. In the first case, we deduce 
from the sine law 
that $l\geq d\sin\alpha_0$. In the second and third case, consider the 
kite which 
has one white vertex in $S(v)$. Geometric considerations show that $2l\geq 
d\sin\alpha_2$ holds if $\alpha_2\leq \pi/4$ or if $\gamma\geq \pi/2$. In 
the remaining case we obtain $l\geq d(\sin\alpha_2)^2$.
\begin{figure}[h]
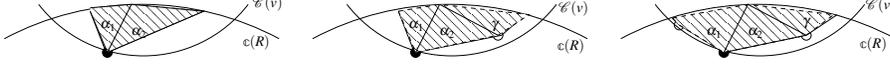

\begin{center}
 \resizebox{0.29\textwidth}{!}{\input{Caseii1.pspdftex}}
\hspace{0.02\textwidth}
 \resizebox{0.29\textwidth}{!}{\input{Caseii2.pspdftex}}
\hspace{0.02\textwidth}
 \resizebox{0.29\textwidth}{!}{\input{Caseii3.pspdftex}}
\end{center}
\caption{Illustration of the three possible cases where ${\mathscr C}(v)\cap 
\B_{int}(R)$ contains only one black vertex of the considered 
component.}\label{FigCaseii}
\end{figure}
\item The last case consists of those configurations where $\B(R)\setminus 
B(v)\not=\emptyset$ and the connected 
component of $S(v)\cap \B(R)$ in consideration contains at least one 
black vertex in the circular arc ${\mathscr C}(v)\cap \B_{\text{int}}(R)$ 
and another intersection point in ${\mathscr C}(v)\cap 
\B_{\text{int}}(R)$. Consider the smallest circular sector $B_{\text{sec}}(v)$ 
of $B(v)$ 
containing the considered component of $S(v)\cap {\mathscr C}(v)\cap 
\B_{\text{int}}(R)$. Denote by $2\beta>0$ its 
central angle, see Figure~\ref{figSec}, and by $2\beta_0\geq 2\beta$ the 
central angle of the circular 
sector $B(v)$ containing the whole arc ${\mathscr C}(v)\cap 
\B_{\text{int}}(R)$. 
\begin{figure}[htb]
\begin{center}
 \resizebox{0.35\textwidth}{!}{\input{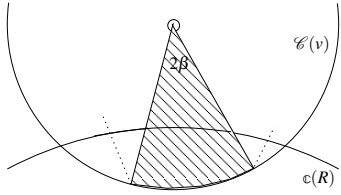}}
\end{center}
\caption{Illustration of the circular sector $B_{\text{sec}}(v)$ 
(shaded) and its central angle $2\beta$.}\label{figSec}
\end{figure}
Note that by our assumptions and reasoning as in the previous cases the angles 
of considered component of $S(v)\setminus B_{\text{sec}}(v)$ at the (at most 
two) points in ${\mathscr C}(v)\cap \B_{\text{int}}(R)$ are at least 
$\alpha_0$. So the estimates of the previous cases apply. 

Note that the length of the intersection of a radius with a lunar region is 
largest for the direction of the line connecting the two centers of circles.
Therefore, in case $\beta\leq \beta_0/2$ it is sufficient 
to consider the longest of the straight edges of the boundary of 
$B_{\text{sec}}(v)\cap \B(R)$. Its length is at least one half of the maximum 
distance 
of  $B_{\text{sec}}(v)\cap {\mathscr C}(v)$ to $B_{sec}(v)\cap \ci(R)$ in 
$B_{\text{sec}}(v)\cap \B(R)$. Consequently, the desired estimate $d\leq C_1l$ 
for some constant $C_1$ follows from the previous cases.

In the remaining case $\beta\geq \beta_0/2$ the distance of 
$B_{\text{sec}}(v)\cap {\mathscr C}(v)$ to $B_{\text{sec}}(v)\cap \ci(R)$ in 
$B_{\text{sec}}(v)\cap \B(R)$ is $R+r(v)-d_c$ where $d_c$ is the distance of 
the 
centers of $\B(R)$ and $B(v)$ as above. Using the fact that the angle $\beta_0$ 
is bounded by $\gamma_{\max}<\pi/2$ from above, it follows by elementary 
calculations and estimates that there exists a constant $C_3>0$ such that 
length of $B_{\text{sec}}(v)\cap \ci(R)$ is at least $C_3 (R+r(v)-d_c)$. 
This implies the desired estimate $d\leq C_1l$ for some constant $C_1$.
\end{enumerate}
  \end{proof}

\begin{remark}
 The same proof as above shows that estimate~\eqref{estdl} also holds in the 
case of a triangulation of (a part of) the plane whose 
triangles have uniformly strictly acute angles, i.e.\ in 
$[\delta,\frac{\pi}{2}-\delta]$ for some $\delta>0$. In this case the
sets $S_T(v)$ are chosen as in Remark~\ref{remdefStriang}.
\end{remark}

\subsection{Estimates on vertex extremal length}\label{secVELest}

In the following we will use the notion of vertex extremal length 
introduced and studied by Cannon and others, see for 
example~\cite{Ca94,HeSch95}. The proofs are mostly based on suitably adapted 
ideas in~\cite{He99}. 
The main aim are Lemmas~\ref{lemEstSumVel}, \ref{lemEstVel}, 
and~\ref{lemIntRad} 
which we need for the proof of Lemma~\ref{lemCompRad} in the Appendix.

We begin with some notation. Let $G$ be a connected planar graph.
Define a {\em path in $G$} to be a sequence of vertices $(v_0,v_1,v_2,\dots, 
v_k,\dots )$ such that $[v_{k-1},v_k]$ is an edge for all $k\geq 1$. We 
identify a path in $G$ with the corresponding curve. Let $\Gamma_G(V_1,V_2)$ be 
the collection of paths from $V_1$ to $V_2$. We allow $V_2=\infty$ by taking 
paths with infinitely many different vertices. Denote by 
$\vel(V_1,V_2)=\vel(\Gamma_G(V_1,V_2))$ the vertex extremal length between 
$V_1$ and $V_2$.

Given three subsets $V_1,V_2,V_3\subset V$ we say that {\em $V_2$ separates 
$V_1$ and $V_3$} if every path from $V_1$ to $V_3$ contains a vertex of $V_2$. 
Note that $V_1\cap V_3\not=\emptyset$ is possible. The next lemma follows by a 
classical argument, see for example~\cite[Chapter~1~D.]{Ah66}. 

\begin{lemma}\label{lemEstSumVel}
 Let $V_1,V_2,\dots, V_{2m}$ be subsets of $V$ which are pairwise disjoint. 
Assume that $V_{i_2}$ separates $V_{i_1}$ and $V_{i_3}$ for $1\leq 
i_1<i_2<i_3$. We allow $V_{2m}=\infty$. Then we have
\[\vel(V_1,V_{2m})\geq \sum_{k=1}^m \vel(V_{2k-1},V_{2k}).\]
\end{lemma}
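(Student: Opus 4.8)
The plan is to prove the discrete version of the classical series (composition) law for extremal length, exactly as in \cite{Ah66,LV73}: if a curve family $\Gamma$ is such that every $\gamma\in\Gamma$ contains subcurves $\sigma_1,\dots,\sigma_m$ with $\sigma_k$ lying in a family $\Gamma_k$, and the $\Gamma_k$ live on \emph{pairwise disjoint} vertex sets, then $\vel(\Gamma)\ge\sum_k\vel(\Gamma_k)$. Here $\Gamma=\Gamma_G(V_1,V_{2m})$. The main work is to manufacture the disjoint vertex sets from the nested separation hypothesis: I will carve $G$ into ``slabs'' $A_1,\dots,A_m$, one for each pair $(V_{2k-1},V_{2k})$, and take $\widehat\Gamma_k$ to be the family of paths inside $A_k$ from $V_{2k-1}$ to $V_{2k}$. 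Since $\widehat\Gamma_k\subseteq\Gamma_G(V_{2k-1},V_{2k})$, the subfamily has more admissible functions, hence $\vel(\widehat\Gamma_k)\ge\vel(V_{2k-1},V_{2k})$, so it suffices to prove $\vel(\Gamma)\ge\sum_k\vel(\widehat\Gamma_k)$. The case $V_{2m}=\infty$ I would reduce to the finite case by adjoining one vertex ``at infinity'', so below all $V_i$ are finite.

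First I would define the slabs. For each index $i$, split $G\setminus V_i$ into connected components and let $R_i^-$ (resp.\ $R_i^+$) be the union of those components meeting $V_j$ for some $j<i$ (resp.\ $j>i$); by the separation hypothesis no component meets both, so $R_i^-\cap R_i^+=\emptyset$. Put
\[
 A_k:=V_{2k-1}\cup V_{2k}\cup\bigl(R_{2k-1}^{+}\cap R_{2k}^{-}\bigr),\qquad k=1,\dots,m .
\]
The step I expect to be the main obstacle is showing $A_1,\dots,A_m$ are pairwise disjoint; it is a finite case analysis using the ordered separation repeatedly. The $V_i$ are disjoint by hypothesis; a vertex of $V_{2k-1}$ or $V_{2k}$ cannot lie in $R_{2k'-1}^{+}$ or $R_{2k'}^{-}$ for $k<k'$, because the intervening set $V_{2k'-1}$ (resp.\ $V_{2k}$) separates it from the target; and if some $v$ lay in $R_{2k-1}^{+}\cap R_{2k}^{-}$ and also in $R_{2k'-1}^{+}\cap R_{2k'}^{-}$ with $k<k'$, then concatenating a path from $v$ to $V_{<2k}$ avoiding $V_{2k}$ with a path from $v$ to $V_{>2k'-1}$ avoiding $V_{2k'-1}$ and applying separation once more forces a vertex of $V_{2k'-1}$ onto the first path, whence $v\in R_{2k}^{+}\cap R_{2k}^{-}=\emptyset$, a contradiction.

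Next I would verify the subcurve property: every $\gamma\in\Gamma$ contains, for each $k$, a subpath $\sigma_k\subseteq\gamma$ lying in $A_k$ and joining $V_{2k-1}$ to $V_{2k}$. Travelling along $\gamma$ from its start in $V_1$, let $b$ be the first time it meets $V_{2k}$ (which happens, since $V_{2k}$ separates $V_1$ from $V_{2m}$ when $k<m$, and $\gamma$ ends in $V_{2m}$ when $k=m$), and let $a<b$ be the last time before $b$ it meets $V_{2k-1}$ (which happens, since $V_{2k-1}$ separates $V_1$ from $V_{2k}$; for $k=1$ just take $a=0$). Set $\sigma_k:=\gamma|_{[a,b]}$: its endpoints lie in $V_{2k-1}$ and $V_{2k}$, and any interior vertex $w$ avoids $V_{2k}$ (it precedes the first hit) and $V_{2k-1}$ (it follows the last one), while following $\gamma$ from $w$ back to the $V_{2k-1}$-endpoint gives a path avoiding $V_{2k}$ (so $w\in R_{2k}^{-}$) and forward to the $V_{2k}$-endpoint gives a path avoiding $V_{2k-1}$ (so $w\in R_{2k-1}^{+}$). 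Hence $w\in A_k$ and $\sigma_k\in\widehat\Gamma_k$.

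Finally I would combine metrics. If $\m(\widehat\Gamma_k)=0$ for some $k$, then extending by zero a $\widehat\Gamma_k$-admissible $\eta$ of area $<\eps$ yields a $\Gamma$-admissible function of area $<\eps$ (since $\int_\gamma d\eta\ge\int_{\sigma_k}d\eta\ge1$), so $\m(\Gamma)=0$ and the inequality is trivial; slabs with $\m(\widehat\Gamma_k)=\infty$ contribute $0$ and may be dropped. Otherwise fix $\eps>0$ and pick, for each $k$, a $\widehat\Gamma_k$-admissible $\eta_k$ supported on $A_k$ (replace $\eta_k$ by $\eta_k\cdot\mathbf 1_{A_k}$) with $m_k:=\text{area}(\eta_k)\le\m(\widehat\Gamma_k)+\eps$. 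Because the supports are pairwise disjoint, $\eta:=\sum_k m_k^{-1}\eta_k$ satisfies $\text{area}(\eta)=\sum_k m_k^{-2}\,\text{area}(\eta_k)=\sum_k m_k^{-1}=:\Sigma$, while for any $\gamma\in\Gamma$, $\int_\gamma d\eta=\sum_k m_k^{-1}\int_\gamma d\eta_k\ge\sum_k m_k^{-1}\int_{\sigma_k}d\eta_k\ge\Sigma$, so $\eta/\Sigma$ is $\Gamma$-admissible. Therefore $\m(\Gamma)\le\text{area}(\eta/\Sigma)=\Sigma^{-1}$, i.e.\ $\vel(V_1,V_{2m})=\vel(\Gamma)\ge\Sigma\ge\sum_k(\m(\widehat\Gamma_k)+\eps)^{-1}$; letting $\eps\to0$ and using $\vel(\widehat\Gamma_k)\ge\vel(V_{2k-1},V_{2k})$ gives $\vel(V_1,V_{2m})\ge\sum_{k=1}^m\vel(V_{2k-1},V_{2k})$, as claimed.
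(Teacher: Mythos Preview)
Your proof is correct and is precisely the classical series law for extremal length that the paper invokes by citing Ahlfors and Lehto--Virtanen; the paper gives no proof of its own, so there is nothing further to compare. Your construction of the disjoint slabs $A_k$, the verification that every path in $\Gamma_G(V_1,V_{2m})$ contains a subpath in each $\widehat\Gamma_k$, and the weighted combination $\eta=\sum_k m_k^{-1}\eta_k$ are exactly the standard ingredients, carried over verbatim to the vertex setting.
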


We will relate vertex extremal length to geometric properties of a given 
(locally finite) planar circle pattern. 

In the following, we associate to a circle $\ci(R)=\ci_c(R)$ with 
center $c$ and radius $R$ the subset of vertices
\[V_{\ci(R)}=\{ v\in V : S(v)\cap \ci(R)\not=\emptyset\}.\]
In the following, we will consider circles $\ci(R)$ for different radii $R$, 
which are all assumed to have the same center $c$.
Our next aim is to obtain a lower bound on the vertex extremal length. This 
estimate holds for all circle patterns which satisfy conditions~(1), (2)
and~(3), in particular for circle patterns considered in 
Theorem~\ref{theoRidEuc}.

\begin{lemma}\label{lemEstVel} 
Let $G$ be a connected graph associated to a locally finite circle pattern 
${\mathscr C}$. Assume that conditions~(1)--(3) hold.
Then there exists a constant $C_2=1/(144C_0^2 +16C_1^2\pi^2)>0$,
such that for all 
$0<R_1<R_2$ there holds: If $V_{\ci(R_2)}\not=\emptyset$, $V_{\ci(R_1)}\cap 
V_{\ci(R_2)}=\emptyset$, and the 
center $c$ is not contained in any of the discs $B(v)$ for $v\in V_{\ci(R_2)}$, 
then
\begin{align*}
 \vel(V_{\ci(R_1)}, V_{\ci(R_2)})\geq 4C_2\frac{(R_2-R_1)^2}{R_2^2}.
\end{align*}
In particular, if $R_2\geq 2R_1$ and $V_{\ci(R_1)}\cap 
V_{\ci(R_2)}=\emptyset$ and the center $c$ is not contained in any of the discs 
$B(v)$ for $v\in V_{\ci(R_2)}$, then
\begin{align*}
 \vel(V_{\ci(R_1)}, V_{\ci(R_2)})\geq C_2.
\end{align*}
\end{lemma}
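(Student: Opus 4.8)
The plan is to bound the vertex modulus $\m(V_{\ci(R_1)}, V_{\ci(R_2)})$ from above by exhibiting an explicit $\Gamma$-admissible function $\eta$ and estimating its area. Since vertex extremal length is the reciprocal of the vertex modulus, an upper bound on $\m$ gives the desired lower bound on $\vel$. First I would choose the natural test function: set $\eta(v)$ to be a constant multiple of the diameter $2r(v)$ of the circle $\mathscr C(v)$ for all vertices $v$ such that the interior of $S(v)$ meets the annular region $A = \{R_1 < |z-c| < R_2\}$ (say, for $v$ whose disc $B(v)$ has nonempty intersection with $A$ of area at least $\tfrac13\,\textsc{Area}(B(v))$), and $\eta(v)=0$ otherwise. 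The normalizing constant should be of order $1/(R_2-R_1)$, reflecting the fact that any path from $V_{\ci(R_1)}$ to $V_{\ci(R_2)}$ must traverse a Euclidean distance of at least $R_2-R_1$ and that consecutive circles along such a path overlap, so the sum of their diameters along the path is at least comparable to $R_2-R_1$ — this uses condition (3) to control, for circles meeting $\ci(R)$, the length $l(v)$ of the arc inside $S(v)$ against the ``radial depth'' $d(v)$.

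\medskip

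\noindent The argument splits into an \emph{admissibility} step and an \emph{area} step. For admissibility: given a path $\gamma$ from $V_{\ci(R_1)}$ to $V_{\ci(R_2)}$, I would argue that $\sum_{v\in\gamma}\eta(v)\geq 1$. The path, viewed as a chain of overlapping discs, connects a disc meeting $\ci(R_1)$ to one meeting $\ci(R_2)$; projecting centers radially and using that neighboring circles intersect (so $L([v_0,v_1])\le r(v_0)+r(v_1)$), the telescoping sum of diameters dominates $R_2-R_1$ up to an absolute constant, which forces admissibility after fixing the normalization. For the area: $\text{area}(\eta) = \mathrm{const}\cdot(R_2-R_1)^{-2}\sum_v r(v)^2$, where the sum is over the selected vertices, i.e.\ those whose discs $B(v)$ substantially overlap the annulus $A_{R_1,R_2}$ enlarged by one ``circle layer''. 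Here Corollary~\ref{corEstDiscs} (valid under conditions (1)--(2)) bounds $\sum r(v)^2$ by $\tfrac{C_0 N}{\pi}\textsc{Area}(D)$ where $D$ is a slightly enlarged annulus; since all relevant discs lie within bounded distance of $A$, one gets $D\subset \B(R_2 + O(R_2-R_1))\setminus\B(R_1 - O(R_2-R_1))$ — but more carefully, since each selected disc overlaps $A$ in positive proportion, $D$ can be taken inside $\B(2R_2)$, giving $\textsc{Area}(D) = O(R_2^2)$ but better $O(R_2(R_2-R_1))$ if one argues the selected discs can't be much bigger than $R_2-R_1$ when $d(v)\le r(v)$ forces... — and this is exactly where the precise constant $C_2 = 1/(48C_0^2 N + 16C_1^2\pi^2)$ is pinned down by bookkeeping the two contributions (the $C_0^2 N$ term from the disc-packing estimate, the $C_1^2\pi^2$ term from the length estimate in condition (3)).

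\medskip

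\noindent \textbf{The main obstacle} I anticipate is the careful geometric bookkeeping in the admissibility step combined with controlling which discs get weight $\eta(v)>0$. One must be honest about the case where a circle $\mathscr C(v)$ is large compared to $R_2-R_1$: such a circle can single-handedly ``cross'' the annulus, and one needs $d(v)\le C_1 l(v)$ (condition (3)) to ensure that even then the arc length captured in $S(v)$ is comparable to the radial crossing distance, so that assigning weight proportional to $r(v)$ (rather than to the genuine crossing length) does not break admissibility by too much. The $1/3$-area threshold in condition (3) and in the definition of the vertex set is precisely what rules out pathological tangential grazing, and threading that condition through both steps — while keeping the constants explicit and matching the stated $C_2$ — is the delicate part. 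The final implication ``$R_2\geq 2R_1\Rightarrow\vel\geq C_2$'' is then immediate from $(R_2-R_1)^2/R_2^2\geq 1/4$.
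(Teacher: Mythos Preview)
Your strategy is the paper's: build an admissible $\eta$ from circle diameters and bound its area, splitting into a ``bulk'' contribution handled by the packing corollary and a ``boundary'' contribution handled by condition~(3). But your test function is mis-specified, and you have inverted the role of condition~(3).

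The paper works with the full disc $\B(R_2)$ rather than the annulus, and sets $d(v)=2r(v)$ when $\textsc{Area}(B(v)\cap\B(R_2))\ge\tfrac13\pi r(v)^2$; for $v\in V_{\ci(R_2)}$ with small overlap it sets $d(v)$ equal to the radial depth of $S(v)\cap\B(R_2)$ (the quantity appearing in condition~(3)); and $d(v)=0$ otherwise. Then $\eta=d/(R_2-R_1)$ is \emph{exactly} admissible, with no absolute constant lost: the radial projection of $\bigcup_{v\in\gamma}(S(v)\cap\B(R_2))$ covers $[R_1,R_2]$, and each piece has radial extent at most $d(v)$. Your version assigns small-overlap vertices weight $0$, which breaks admissibility; assigning them $2r(v)$ instead would break the area bound, since a huge circle grazing $\ci(R_2)$ would contribute an uncontrolled $4r(v)^2$. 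Condition~(3) is used in the \emph{area} step, not for admissibility: for small-overlap vertices $d(v)\le C_1\,l(v)$ gives $\sum d(v)\le 2\pi C_1 R_2$, hence $\sum d(v)^2\le(\sum d(v))^2\le 4C_1^2\pi^2 R_2^2$, which is precisely the second term in $1/C_2$.
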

\begin{proof}
 For $v\in V$ define $d(v):=\min\{2R_2,2r(v)\}$
if the area of the region $B(v)\cap \B(R_2)$ is $\textsc{Area}(B(v)\cap 
\B(R_2))\geq\frac{1}{5}\pi r(v)^2$. If $v\in V_{\ci(R_2)}$ and if
$\textsc{Area}(B(v)\cap \B(R_2))<\frac{1}{5}\pi r(v)^2$ 
then let $d(v)$ be the the maximum of the distance of a point 
in $S(v)\cap \B(R_2)$ to the arc of $\ci(R_2)\cap S(v)$ in the same connected 
component as in condition~(3). Else 
set $d(v)=0$. Then for all paths $\gamma$ from $V_{\ci(R_1)}$ to $V_{\ci(R_2)}$ 
we have 
$\sum_{v\in\gamma} d(v)\geq R_2-R_1$. Therefore, $\eta(v)=d(v)/(R_2-R_1)$ is a 
$\Gamma_G( V_{\ci(R_1)},V_{\ci(R_2)})$-admissible function. Furthermore,
\begin{align*}
 \m(\Gamma_G( V_{\ci(R_1)},V_{\ci(R_2)}))&\leq \text{area}(\eta) \\
&\leq \sum\limits_{\textsc{Area}(B(v)\cap \B(R_2))\geq\frac{1}{5}\pi r(v)^2} 
\frac{\min\{4r(v)^2,4R_2^2\}}{(R_2-R_1)^2} \\
&\qquad + \sum\limits_{\textsc{Area}(B(v)\cap 
\B(R_2))<\frac{1}{5}\pi r(v)^2} \frac{d(v)^2}{(R_2-R_1)^2}.
\end{align*}
Similarly as in Corollary~\ref{corEstDiscs} 
\begin{align*}
\sum\limits_{\textsc{Area}(B(v)\cap \B(R_2))\geq\frac{1}{5}\pi r(v)^2} 
\hspace{-1.3ex}
\frac{\min\{4r(v)^2,4R_2^2\}}{(R_2-R_1)^2} \leq \frac{36}{\pi}C_0^2 
\frac{\textsc{Area}(\B(R_2))}{(R_2-R_1)^2} = 36 C_0^2 
\frac{R_2^2}{(R_2-R_1)^2}.
\end{align*}
Now condition~(3) guarantees that 
\[\sum\limits_{\textsc{Area}(B(v)\cap \B(R_2))<\frac{1}{5}\pi r(v)^2} d(v) \leq 
C_1 \text{length}(\ci(R_2)) =2C_1\pi R_2.\] 
Thus the second sum is bounded by $4C_1^2\pi^2R_2^2/(R_2-R_1)^2$. We may 
therefore take $C_2=1/(144C_0^2 +16C_1^2\pi^2)$.
   \end{proof}

The previous lemma allows to characterize the type of graphs associated to 
locally finite circle pattern satisfying conditions~(1)--(3). 

In general, let $G$ be a connected infinite planar graph and $V_0\subset V$ be 
a non-empty finite subset of 
vertices. Then $G$ is called {\em $\vel$-parabolic}, if 
$\vel(V_0,\infty)=\infty$ and {\em $\vel$-hyperbolic} otherwise. Note that 
these definitions are independent of $V_0$.

\begin{lemma}
 Let $G$ be the planar graph associated to an embedded circle pattern 
satisfying conditions~(1)--(3). If the pattern is locally finite in $\C$, then 
$G$ is $\vel$-parabolic.
\end{lemma}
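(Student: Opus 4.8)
The claim to prove is that a locally finite pattern of circles satisfying conditions (1)--(3) gives a $\vel$-parabolic graph $G$. The natural strategy is to combine Lemma~\ref{lemEstVel} with the superadditivity of vertex extremal length across separating annuli from Lemma~\ref{lemEstSumVel}, exactly in the spirit of the classical proof that the hexagonal lattice (or any locally finite planar pattern) is parabolic.

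\begin{proof}
Fix a vertex $v_0\in V$ and, after a translation, assume the corresponding circle ${\mathscr C}(v_0)$ has center at the origin. Let $V_0=\{v_0\}$. For $k\geq 0$ set $R_k=R_0\cdot 2^{k}$ where $R_0>0$ is chosen large enough that $S(v_0)\subset \B(R_0)$. Consider the family of circles $\ci(R_k)$ and the associated vertex sets $W_k:=V_{\ci(R_k)}=\{v\in V:\operatorname{interior}(S(v))\cap\ci(R_k)\neq\emptyset\}$. Because the pattern is locally finite, for each fixed $m$ only finitely many sets $S(v)$ meet the compact annulus $\overline{\B(R_m)}$, hence the $W_k$ for $0\le k\le m$ are finite sets; moreover, by passing to a subsequence of the radii (still comparable to a geometric progression, so that $R_{k+1}\geq 2R_k$ still holds after renaming) we may arrange that $W_k\cap W_{k+1}=\emptyset$ for all $k$, since a vertex $v$ with $S(v)$ meeting two circles of very different radii would force $S(v)$ to have unbounded diameter, contradicting local finiteness on a slightly larger compact set. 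Each $W_k$ separates $v_0$ from $\infty$: any path in $G$ leaving every compact set passes through vertices whose circles — and hence, by the homeomorphism property of the sets $S(v)$ relating them to faces of $G^*$ — whose sets $S(v)$ sweep continuously outward, so the path must contain a vertex whose $S(v)$ meets $\ci(R_k)$.

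Now apply Lemma~\ref{lemEstSumVel} with the chain $V_1=W_0$, $V_2=W_1$, $V_3=W_1$, $V_4=W_2,\dots$: more precisely, for any $m$, the sets $W_0,W_1,W_1,W_2,W_2,\dots,W_m$ satisfy the separation hypotheses (each $W_k$ with $1\le k\le m-1$ separates everything with smaller index from everything with larger index), so
\[
\vel(W_0,\infty)\ \geq\ \vel(W_0,W_m)\ \geq\ \sum_{k=1}^{m}\vel(W_{k-1},W_k).
\]
Since $R_k\geq 2R_{k-1}$ and $W_{k-1}\cap W_k=\emptyset$, Lemma~\ref{lemEstVel} gives $\vel(W_{k-1},W_k)\geq C_2$ for each $k$, with $C_2=1/(48C_0^2N+16C_1^2\pi^2)>0$ a constant independent of $k$. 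Therefore $\vel(W_0,\infty)\geq mC_2$ for every $m$, which forces $\vel(V_0,\infty)=\infty$, i.e.\ $G$ is $\vel$-parabolic.

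The main obstacle is the topological bookkeeping needed to justify the two structural facts used above: first, that each $W_k$ genuinely separates $v_0$ from $\infty$ in the combinatorial graph $G$ (this uses that the sets $S(v)$ cover the pattern and that their union is homeomorphic to the corresponding union of faces of $G^*$, so a combinatorial path yields a continuous path crossing each circle $\ci(R_k)$); and second, that one may choose the radii so that consecutive $W_k$ are disjoint while keeping the ratio $R_k/R_{k-1}$ bounded below by $2$ — this is where local finiteness enters essentially, since it bounds how ``fat'' any single set $S(v)$ can be relative to its distance from the origin. Once these are in place, the estimate is immediate from Lemmas~\ref{lemEstVel} and~\ref{lemEstSumVel}.
\end{proof}
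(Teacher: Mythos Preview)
Your approach is essentially the paper's: build a geometric sequence of radii $R_j$ with pairwise disjoint vertex sets $V_{\ci(R_j)}$, then combine Lemma~\ref{lemEstSumVel} with Lemma~\ref{lemEstVel} to get $\vel(V_{\ci(R_1)},\infty)\geq\sum C_2=\infty$. One small correction: in your application of Lemma~\ref{lemEstSumVel} you list the sets as $W_0,W_1,W_1,W_2,W_2,\dots$, but that lemma requires the $2m$ sets to be \emph{pairwise disjoint}, so you cannot reuse any $W_k$; instead sum only over alternating pairs, obtaining $\vel(W_0,W_{2m})\geq\sum_{k=1}^{m}\vel(W_{2k-1},W_{2k})\geq mC_2$, which still diverges.
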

\begin{proof}
 If the pattern is locally finite, we can find inductively a sequence of 
circles $\ci(R_j)$ with $R_{j+1}\geq 2R_j$ and $V_{\ci(R_{j+1})}\cap 
V_{\ci(R_j)}=\emptyset$ and $V_{\ci(R_1)}\not=\emptyset$. Furthermore, we can 
assume (by suitably enlarging $R_1$ if necessary as the circle pattern is 
locally finite) that the center $c$ is not contained in any of the discs $B(v)$ 
for all $v\in V_{\ci(R_1)}$. Then 
$V_{\ci(R_{i_2})}$ separates $V_{\ci(R_{i_1})}$ and $V_{\ci(R_{i_3})}$ for all 
$1\leq i_1<i_2<i_3$. 
Now Lemmas~\ref{lemEstSumVel} and~\ref{lemEstVel} imply
\begin{align*}
 \vel(V_{\ci(R_1)},\infty)\geq \sum_{k=1}^\infty \vel(V_{\ci(R_{2k-1})}, 
V_{\ci(R_{2k})}) \geq \sum_{k=1}^\infty C_2 =\infty.
\end{align*}
   \end{proof}

\begin{lemma}
 Let $G$ be the planar graph associated to an embedded circle pattern 
satisfying 
conditions~(1)--(3). Let $\mu:E\to[0,\infty)$ be conductances on the edges of 
$G$. If $\sum_{e=[v,w]}\mu(e) \leq C_4$ holds for some constant $C_4>0$ and all 
vertices $v\in V$, then for any disjoint subsets $V_1,V_2\subset V$ there holds
\begin{equation*}
 \vel(V_1,V_2)\leq 2C_4 R_{\text{eff}}(V_1,V_2).
\end{equation*}
In particular, if $G$ is $\vel$-parabolic and $G$ is connected, then $(G,\mu)$ 
is recurrent.
\end{lemma}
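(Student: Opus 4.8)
The plan is to turn an arbitrary $\Gamma_G(V_1,V_2)$-admissible function into a competitor for the discrete Dirichlet problem~\eqref{eqdefReff} that defines $R_{\text{eff}}(V_1,V_2)$. Fix disjoint $V_1,V_2\subset V$, write $\Gamma=\Gamma_G(V_1,V_2)$, and let $\eta\colon V\to[0,\infty)$ be any $\Gamma$-admissible function; we may assume $\text{area}(\eta)<\infty$, since otherwise there is nothing to prove. For $v\in V$ set
\[
 d_\eta(v)=\inf\Bigl\{\textstyle\sum_{u\in\gamma}\eta(u)\ :\ \gamma\text{ a path in }G\text{ from }V_1\text{ to }v\Bigr\},
\]
with $d_\eta(v)=+\infty$ if no such path exists, and define $g\colon V\to[0,1]$ by $g(v)=0$ for $v\in V_1$ and $g(v)=\min\{1,d_\eta(v)\}$ for $v\notin V_1$. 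Every path from $V_1$ to a vertex $v\in V_2$ is a curve in $\Gamma$, so its $\eta$-length is at least $1$ by admissibility; hence $d_\eta(v)\ge 1$ and $g(v)=1$ for $v\in V_2$. Thus $g$ is admissible in~\eqref{eqdefReff} and $1/R_{\text{eff}}(V_1,V_2)\le{\mathcal E}(g)$.

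Next I would estimate ${\mathcal E}(g)$ against $\text{area}(\eta)$. For an edge $e=[v,w]\in E$ the bounds $d_\eta(v)\le d_\eta(w)+\eta(v)$ and $d_\eta(w)\le d_\eta(v)+\eta(w)$ — obtained by prolonging a near-optimal path by the edge $e$ — together with the truncation at $1$ and the prescribed value $0$ on $V_1$ give $|g(v)-g(w)|\le\eta(v)+\eta(w)$ for every edge. Using $(\eta(v)+\eta(w))^2\le 2\eta(v)^2+2\eta(w)^2$ and reorganizing the edge sum by vertices,
\begin{align*}
 {\mathcal E}(g)=\sum_{e=[v,w]\in E}\mu(e)\bigl(g(v)-g(w)\bigr)^2
 &\le 2\sum_{e=[v,w]\in E}\mu(e)\bigl(\eta(v)^2+\eta(w)^2\bigr)\\
 &=2\sum_{v\in V}\eta(v)^2\sum_{w\,:\,[v,w]\in E}\mu([v,w])
 \ \le\ 2C_4\,\text{area}(\eta),
\end{align*}
where the last step uses the hypothesis $\sum_{e=[v,w]}\mu(e)\le C_4$. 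Combining with the previous paragraph, $1/R_{\text{eff}}(V_1,V_2)\le 2C_4\,\text{area}(\eta)$ for every $\Gamma$-admissible $\eta$; taking the infimum over $\eta$ gives $1/R_{\text{eff}}(V_1,V_2)\le 2C_4\,\m(\Gamma)$, that is, $\vel(V_1,V_2)=1/\m(\Gamma)\le 2C_4\,R_{\text{eff}}(V_1,V_2)$.

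For the last assertion, note that the graph of a pattern of circles is locally finite, so each ball $B(v_0,n)$ is finite and these balls exhaust $V$. Recurrence of $(G,\mu)$ is equivalent to $R_{\text{eff}}(v_0,V\setminus B(v_0,n))\to\infty$ as $n\to\infty$ (see~\cite{Woe00,LP}). Applying the inequality just proved with $V_1=\{v_0\}$ and $V_2=V\setminus B(v_0,n)$ yields
\[
 R_{\text{eff}}\bigl(v_0,V\setminus B(v_0,n)\bigr)\ \ge\ \frac{1}{2C_4}\,\vel\bigl(v_0,V\setminus B(v_0,n)\bigr),
\]
and by standard properties of vertex extremal length the right-hand quantity increases to $\vel(v_0,\infty)$ as $n\to\infty$ (see~\cite{Ca94,HeSch95}); since $G$ is $\vel$-parabolic this limit is $\infty$, so $R_{\text{eff}}(v_0,V\setminus B(v_0,n))\to\infty$ and $(G,\mu)$ is recurrent.

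The whole argument is essentially bookkeeping, and no use is made of conditions~(1)--(3) beyond local finiteness of $G$. The only slightly delicate point is enforcing the correct boundary values of $g$ — exactly $0$ on $V_1$ and exactly $1$ on $V_2$ — while preserving the edgewise estimate $|g(v)-g(w)|\le\eta(v)+\eta(w)$; and, for the recurrence statement, the standard fact that the vertex extremal length to the complements of combinatorial balls increases to $\vel(v_0,\infty)$.
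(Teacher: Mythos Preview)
Your argument is correct and is precisely the standard one: the paper does not spell out a proof but merely refers to Lemma~5.4 in~\cite{He99}, and what you wrote is exactly that argument --- build the truncated $\eta$-distance $g=\min\{1,d_\eta\}$ as a Dirichlet competitor, use $|g(v)-g(w)|\le\eta(v)+\eta(w)$ edgewise, and collapse the energy to $2C_4\,\text{area}(\eta)$ via the vertex-degree bound. Your observation that conditions~(1)--(3) are irrelevant for the inequality itself (only local finiteness enters, in the recurrence corollary) is also correct.
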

The proof is completely analogous to the proof of Lemma~5.4 
in~\cite{He99}. Note that together with the following lemma this provides 
another proof of Lemma~\ref{lemrec}.
 
\begin{lemma}
 Let $\mathscr C$ be an embedded circle pattern for $G$ and 
$\alpha:E(G)\to[\alpha_0,\pi)$ with $0<\alpha_0\leq \pi/2$. Define conductances 
as in~\eqref{eqdefmu}. Then 
there is a constant $C_5=C_5(\alpha_0)$ such that 
\begin{equation*}
\sum_{e=[v,w]}\mu(e) \leq C_5
\end{equation*}
holds for all interior vertices $v\in V$.
\end{lemma}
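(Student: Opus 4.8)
The plan is to work entirely with the geometric interpretation of the conductance recorded in Remark~\ref{remArea}. Fix an interior vertex $v_0$, let $w$ range over the neighbours of $v_0$ in $G$ and write $e=[v_0,w]$. By Remark~\ref{remArea} one has $\mu(e)=H(e)/L(e)$, where $H(e)$ is the distance between the two intersection points of the circles $\mathscr C(v_0)$ and $\mathscr C(w)$ (i.e.\ the two black vertices of the kite $K_e$) and $L(e)=|c(v_0)-c(w)|$. So the statement reduces to an upper bound on $\sum_e H(e)$ and a lower bound on $L(e)$, both as multiples of $r(v_0)$; dividing the first by the second then yields $C_5(\alpha_0)$.

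For the sum of the $H(e)$, I would use that, as recorded after Proposition~\ref{propRadius}, the interior angle of the kite $K_e$ at the centre $c(v_0)$ equals $2\psi_e$ with $\psi_e:=f_{\alpha_e}(\log r(w)-\log r(v_0))\in(0,\pi)$, and that the closing condition~\eqref{eqFgen} says precisely $\sum_e\psi_e=\pi$. Since both intersection points of $\mathscr C(v_0)$ and $\mathscr C(w)$ lie on the circle $\mathscr C(v_0)$ of radius $r(v_0)$ and span the angle $2\psi_e$ seen from $c(v_0)$, the chord between them has length $H(e)=2r(v_0)\sin\psi_e\le 2r(v_0)\psi_e$, whence
\[\sum_{e=[v_0,w]}H(e)\ \le\ 2r(v_0)\sum_e\psi_e\ =\ 2\pi\,r(v_0).\]
(This is just the elementary fact that the polygon inscribed in $\mathscr C(v_0)$ through the black vertices adjacent to $v_0$ has perimeter at most the circumference.)

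For the lower bound on $L(e)$, cutting $K_e$ along its diagonal $c(v_0)c(w)$ produces a triangle with two sides of lengths $r(v_0)$ and $r(w)$ whose included angle, at the intersection point, is the exterior intersection angle $\alpha_e$; this follows either from Lemma~\ref{lemPropf}(2) together with the fact that the four kite angles sum to $2\pi$, or by comparing the expressions for $\mu(e)$ from~\eqref{eqdefmu} and Lemma~\ref{lemPropf}(1) with $H(e)/L(e)$. The law of cosines then gives $L(e)^2=r(v_0)^2+r(w)^2-2r(v_0)r(w)\cos\alpha_e$, and minimising the right-hand side over all positive values of $r(w)$ shows $L(e)^2\ge r(v_0)^2$ when $\alpha_e\ge\pi/2$ and $L(e)^2\ge r(v_0)^2\sin^2\alpha_e\ge r(v_0)^2\sin^2\alpha_0$ when $\alpha_0\le\alpha_e<\pi/2$; in either case $L(e)\ge r(v_0)\sin\alpha_0$. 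Combining the two estimates,
\[\sum_{e=[v_0,w]}\mu(e)\ =\ \sum_e\frac{H(e)}{L(e)}\ \le\ \frac{1}{r(v_0)\sin\alpha_0}\sum_e H(e)\ \le\ \frac{2\pi}{\sin\alpha_0},\]
so the claim holds with $C_5(\alpha_0)=2\pi/\sin\alpha_0$. The only slightly delicate point I anticipate is this last lower bound on $L(e)$ for $\alpha_e$ near $\pi$: there the naive estimate $L(e)\ge r(v_0)\sin\alpha_e$ degenerates, and one must instead observe that the quadratic in $r(w)$ is increasing on $(0,\infty)$, so that $L(e)^2\ge r(v_0)^2$; everything else is routine. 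Note that neither embeddedness nor convexity of the kites is needed beyond what guarantees~\eqref{eqFgen} at $v_0$.
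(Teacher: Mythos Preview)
Your proof is correct and considerably simpler than the paper's. The paper instead writes the conductance explicitly as $\mu_\alpha(\beta)=\sin(2\beta)+(1-\cos(2\beta))\cot\alpha$, where $2\beta$ is the kite angle at $v_0$, and treats the question as a constrained maximisation of $\sum_i\mu_{\alpha_i}(\beta_i)$ under $\sum_i\beta_i=\pi$; the argument then proceeds by Lagrange multipliers and a case analysis on whether the $\beta_i$ lie in the interior of their admissible ranges or on the boundary, with separate treatment of the cases $\alpha_i\le\pi/2$ and $\alpha_i>\pi/2$. Your route via the geometric identity $\mu(e)=H(e)/L(e)$, the chord formula $H(e)=2r(v_0)\sin\psi_e$, the closing condition $\sum_e\psi_e=\pi$, and the law-of-cosines lower bound on $L(e)$ avoids all of this and yields the explicit constant $C_5=2\pi/\sin\alpha_0$. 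It is also, as you note, slightly more general: the paper's optimisation argument restricts the $\beta_i$ to the convex-kite range from the outset, whereas your bounds use only~\eqref{eqFgen}. Both approaches share the same key structural ingredient---the constraint $\sum_e\psi_e=\pi$ at an interior vertex---but you exploit it in one line via $\sum_e\sin\psi_e\le\sum_e\psi_e$ rather than through an optimisation.
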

\begin{proof}
First note that for any kite $K_e$ corresponding to the edge $e=[v,w]$ 
the conductance $\mu(e)$ may be expressed as 
\[\mu(e)=\mu_\alpha(\beta)= \sin(2\beta)+(1-\cos(2\beta))\cot\alpha,\]
where $\alpha=\alpha_e\in[\alpha_0,\pi)$ is the intersection angle and 
$2\beta\in(0,2(\pi-\alpha))$
is the angle of the kite $K_e$ at the vertex $v$. Note that for fixed 
$\alpha$ the function $\mu_\alpha$ has a unique maximum for 
$\beta=(\pi-\alpha)/2$ in its domain with value $\cot(\alpha/2)$.

Let $v\in V_{int}(G)$. Let $e_1,\dots,e_m$ be the edges incident to $v$ and 
$2\beta_1,\dots, 2\beta_m$ the angles of the kites 
$K_{e_1},\dots,K_{e_m}$ at $v$. As the intersection angles 
$\alpha:E(G)\to[\alpha_0,\pi)$ are fixed, we need to bound the maximum of the 
function $F(\beta_1,\dots, \beta_m)=\sum_{i=1}^m \mu_{\alpha_i}(\beta_i)$ where
$\alpha_i=\alpha_{e_i}$ and
$\beta_i\in [0,\pi-\alpha_i]$
under the constraint that $\sum_{i=1}^m\beta_i=\pi$. (This follows from the 
embedding of the incident kites.)

Critical interior points then satisfy $\mu_{\alpha_i}'(\hat{\beta}_i)=\lambda=$ 
constant for all $i=1,\dots,m$, where $\lambda\in[-2\max\{1,1/\sin\alpha_i\}, 
2\max\{1,1/\sin\alpha_i\}]$. Thus, all angles 
$\hat{\beta}_i$ are larger than $(\pi-\alpha_i)/2$ or all smaller. In the  
second case, we can express 
$\mu_{\alpha_i}(\hat{\beta}_i)=(1+\lambda/2)\tan\hat{\beta}_i$. As 
$\hat{\beta}_i\leq (\pi-\alpha_i)/2\leq (\pi-\alpha_0)/2$, we estimate 
$\tan \hat{\beta}_i\leq \hat{\beta}_i \frac{2}{\pi-\alpha_0} 
\cot(\frac{\alpha_0}{2})$ and obtain an 
estimate $F(\hat{\beta}_1,\dots, \hat{\beta}_m)\leq C(\alpha_0)$ for some 
constant $C(\alpha_0)$. 
In case where $\hat{\beta}_i> (\pi-\alpha_i)/2$ holds for all $i=1,\dots,m$, 
there are at most three kites with intersection angles $\alpha_i\leq \pi/2$. 
For all other kites $\mu_{\alpha_i}$ is defined for 
$\beta_i\in[0,\pi-\alpha_i]$. Thus we deduce that 
\begin{multline*}
F(\hat{\beta}_1,\dots, \hat{\beta}_m)\leq 3\cot(\alpha_0/2) 
+\sum_{\alpha_i>\pi/2}\left( \mu_{\alpha_i}(0)+\int_0^{\hat{\beta}_i} 
\mu_{\alpha_i}'({\beta}_i) d\beta_i \right) \\
\leq 3\cot(\alpha_0/2) +2\pi.
\end{multline*}
Now consider the case that some variable $\beta_i$ assumes its minimum or 
maximum. If there is a minimal and a maximal value, say $\beta_i=0$ and 
$\beta_j=\pi-\alpha_j$, then a simple Taylor expansion shows that 
\[ F(\beta_1,\dots,\beta_i,\dots,\beta_j,\dots, \beta_m)< 
F(\beta_1,\dots,\beta_i+\eps,\dots,\beta_j-\eps,\dots, \beta_m)\]
for $0<\eps$ small enough. Thus the maximum of $F(\beta_1,\dots, \beta_m)$ is 
not attained at such a point. As 
$\mu_{\alpha_i}(0)=0=\mu_{\alpha_i}(\pi-\alpha_i)$, for $\hat{\beta}_i=0$ or 
$\hat{\beta}_i=\pi-\alpha_i$ we just consider the function $F$ where the $i$th 
term is missing and in the second case the constraint is changed to 
$\sum_{j=1}^m\beta_j=\alpha_i$. Thus the previous reasoning applies. 
Furthermore, if $\beta_i$ are all minimal or all maximal then $F$ assumes it 
global minimum at this point.
  \end{proof}

Let $G$ be a planar graph and $V_1,V_2\subset V$ be two nonempty subsets of 
vertices. We allow $V_2=\infty$. Let $\Gamma_G^*(V_1,V_2)$ be the collection of 
vertex curves in $G$ which separate $V_1$ and $V_2$. Then by arguments similar 
to the duality argument for the extremal length of curve families in the plane 
(see for example~\cite{Sch93,HeSch95}), there holds
\[\m(\Gamma_G^*(V_1,V_2))= \frac{1}{\m(\Gamma_G(V_1,V_2))} =\vel(V_1,V_2).\]
Note that if $G$ is the 1-skeleton of a strongly regular cell decomposition of 
an open disc and $\mathscr C$ is a corresponding circle pattern, then 
$V_\ci$ is a connected set of vertices for any circle $\ci$ and $\Sp(V_\ci)= 
\bigcup_{v\in V_\ci} S(v)$ is pathwise connected. This is also true for any 
path $\gamma$ in $G$ and the set $\Sp(\gamma)= \bigcup_{v\in \gamma} S(v)$.

\begin{lemma}\label{lemIntRad}
Let $G$ be the 1-skeleton of a strongly regular cell decomposition of 
an open disc. Let $\mathscr C$ be a corresponding circle pattern such
that conditions~(1)--(3) hold.
Let $V_0=\{v_0\},V_1,V_2\subseteq V$ be pairwise disjoint subsets of vertices 
and $V_3=\infty$. Assume that for $0\leq i_1<i_2<i_3\leq 3$ the set $V_{i_2}$ 
separates $V_{i_1}$ and $V_{i_3}$. Assume further that $S(v_0)\cap S(v)=
\emptyset$ for all $v\in V_1$. Then there is a constant 
$C_6= 9/(4C_2)= 9(144C_0^2 +4C_1^2\pi^2)>0$,  such that the following is true. 

If $\vel(V_1,V_2)>C_6$, then there is some $R>0$ such that for any circle 
$\ci(\rho)=\ci_{v_0}(\rho)$ with center $v_0$ and radius $\rho\in[R,2R]$ the 
set $V_{\ci(\rho)}$ separates $V_1$ and $V_2$. 
\end{lemma}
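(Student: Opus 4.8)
The plan is to recast the combinatorial statement as a topological separation property of the vertex sets $V_{\ci(\rho)}$, and then to extract a good radius interval $[R,2R]$ from the hypothesis $\vel(V_1,V_2)>C_6$ using the area and arc-length estimates behind Lemma~\ref{lemEstVel}.

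First I would prove a topological separation lemma: for any circle $\ci(\rho)=\ci_{c(v_0)}(\rho)$ centered at the center $c(v_0)$ of $\mathscr C(v_0)$, the set $V_{\ci(\rho)}$ separates in $G$ the vertex set $A_\rho=\{v\in V:S(v)\subseteq\B(\rho)\}$ from $Z_\rho=\{v\in V:S(v)\cap\overline{\B(\rho)}=\emptyset\}$. Indeed, if a path $\gamma$ in $G$ avoids $V_{\ci(\rho)}$, then for each $v\in\gamma$ the connected open set $\mathrm{int}\,S(v)$ misses $\ci(\rho)$, so it lies either in $\B(\rho)$ or in the exterior of $\overline{\B(\rho)}$; and across an edge $[v,w]$ of $G$ the interiors $\mathrm{int}\,S(v)$ and $\mathrm{int}\,S(w)$ meet (for a circle pattern with convex kites, the open common chord of $\mathscr C(v)$ and $\mathscr C(w)$ lies in both, being the interior of a diagonal of the kite $K_{[v,w]}$; in the general case up to a generic choice of $\rho$), so the side of $\ci(\rho)$ is the same for all vertices along $\gamma$. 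In particular, whenever $V_1\subseteq A_\rho$ and $V_2\subseteq Z_\rho$, the set $V_{\ci(\rho)}$ separates $V_1$ from $V_2$.

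Next, set $\rho_1:=\max_{v\in V_1}\sup_{p\in S(v)}|p-c(v_0)|$ and $\rho_2:=\min_{v\in V_2}\mathrm{dist}(c(v_0),S(v))$; these are finite because each $V_{\ci(\rho)}$ is finite by local finiteness and the nested separation hypotheses, together with the $\vel$-parabolicity argument (Lemmas~\ref{lemEstSumVel} and~\ref{lemEstVel}), keep the parts of $V_1$ and $V_2$ relevant to $\vel(V_1,V_2)$ in a bounded region (this boundedness reduction is a technical point needing care, but not the main one). By the separation lemma, $V_{\ci(\rho)}$ separates $V_1$ from $V_2$ for every $\rho\in(\rho_1,\rho_2)$, so it suffices to show that $\vel(V_1,V_2)>C_6$ forces $\rho_2>2\rho_1$: then any $R$ slightly larger than $\rho_1$ gives $[R,2R]\subseteq(\rho_1,\rho_2)$. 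I would argue by contraposition. Analyzing, via the separation lemma, a path $\gamma$ that witnesses non-separation of some $V_{\ci(\rho)}$ with $\rho\in[R,2R]$ — assumed to exist for every $R$ — shows that the quantities $\widetilde\rho_1:=\max_{v\in V_1}\mathrm{dist}(c(v_0),S(v))$ and $\rho_2':=\min_{v\in V_2}\sup_{p\in S(v)}|p-c(v_0)|$ satisfy $\rho_2'<2\widetilde\rho_1$.

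It remains — and this is where I expect the real work to lie — to show that $\rho_2'<2\widetilde\rho_1$ implies $\vel(V_1,V_2)\le C_6=9/(4C_2)$. For this I would build a weight $\eta:V\to[0,\infty)$ admissible for the family $\Gamma^*_G(V_1,V_2)$ of vertex curves separating $V_1$ from $V_2$, supported on vertices whose discs $B(v)$ meet the annulus $\{\,\widetilde\rho_1\le|z-c(v_0)|\le2\widetilde\rho_1\,\}$, with $\eta(v)$ proportional to $\mathrm{diam}\,B(v)$, and to the arc-length $l(v)$ of $\ci(R)\cap S(v)$ from condition~(3) for the vertices straddling the two bounding circles. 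Admissibility uses that for any separating vertex curve $\sigma$ the set $\bigcup_{v\in\sigma}S(v)$ is connected and separates $\bigcup_{v\in V_1}S(v)$ from $\bigcup_{v\in V_2}S(v)$ in $\C$, hence must cross the annulus, and condition~(3) converts the crossing into $\int_\sigma d\eta\ge1$. The area of $\eta$ is bounded exactly as in the proof of Lemma~\ref{lemEstVel}: the sum of squared diameters is controlled through Corollary~\ref{corEstDiscs} by a constant times $\textsc{Area}$ of an annulus of ratio $2$, the straddling terms by the $C_1$-estimate, and the normalization is chosen so that $\mathrm{area}(\eta)\le9/(4C_2)$; thus $\vel(V_1,V_2)=\m(\Gamma^*_G(V_1,V_2))\le\mathrm{area}(\eta)\le C_6$. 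The delicate steps are checking that a separating curve's union of $S$-sets really is forced through a bounded-modulus annulus, and tracking the constants so that the bound comes out to exactly $9/(4C_2)$, which is where the form of $C_2$ from Lemma~\ref{lemEstVel} re-enters.
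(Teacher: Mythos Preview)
Your overall framework is right and matches the paper: argue by contradiction, use the duality $\vel(V_1,V_2)=\m(\Gamma^*_G(V_1,V_2))$, and build an admissible $\eta$ whose area is bounded by $C_6$ using Corollary~\ref{corEstDiscs} and condition~(3). The choice of $R$ also coincides with the paper's, which simply takes $R=\max_{v\in V_1} d(c(v_0),B(v))$.

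However, there is a genuine gap in how you establish admissibility of $\eta$. You try to control the geometric position of $V_2$ (via $\rho_2$, $\rho_2'$) and then argue that any separating vertex curve ``must cross the annulus'' $\{\tilde\rho_1\le|z-c(v_0)|\le 2\tilde\rho_1\}$. But nothing in the hypotheses bounds $V_2$ away from $c(v_0)$, and combinatorial separation of $V_1$ and $V_2$ in $G$ does not by itself force $\bigcup_{v\in\sigma}S(v)$ to separate the corresponding $S$-unions in $\C$ across a fixed annulus. So the step ``hence must cross the annulus'' is unjustified, and with it the admissibility of your $\eta$.

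The paper avoids this entirely by a different use of the contradiction hypothesis. It never bounds $V_2$ geometrically. Instead: assuming separation fails at some $\rho_1\in[R,2R]$, take a path $\gamma_0$ from $V_1$ to $V_2$ with $\gamma_0\cap V_{\ci(\rho_1)}=\emptyset$. By your own topological lemma (which is correct), $\B(\gamma_0)$ lies entirely on one side of $\ci(\rho_1)$; since its first vertex is in $V_1$ and $B(v)\cap\B(R)\neq\emptyset$ for all $v\in V_1$, in fact $\B(\gamma_0)\subset\B(\rho_1)$. Now the crucial point: every $\gamma^*\in\Gamma^*_G(V_1,V_2)$ must intersect the path $\gamma_0$ (this is pure combinatorics), hence $\B(\gamma^*)\cap\B(\rho_1)\neq\emptyset$. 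Combined with $\mathrm{diam}\,\B(\gamma^*)\ge R$ (since $\gamma^*$ separates $\{v_0\}\cup V_1$ from $\infty$), a connected subcurve $\hat\gamma^*$ either has $\B(\hat\gamma^*)\subset\B(3R)$ or crosses from $\ci(\rho_1)$ to $\ci(3R)$. This is what anchors every separating curve to the \emph{disc} $\B(3R)$ --- not an annulus --- and makes the $\eta$ supported there (defined exactly as in Lemma~\ref{lemEstVel} with $R_2=3R$) admissible. The area bound $\text{area}(\eta)\le 9(12C_0^2N+4C_1^2\pi^2)=C_6$ then follows immediately.

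In short: replace your attempt to pin down $V_2$ by the observation that the bad path $\gamma_0$ pins down every $\gamma^*$ to $\B(2R)$, and support $\eta$ on $\B(3R)$ rather than on an annulus.
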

\begin{proof}
 Without loss of generality we can assume that $v_0$ is the origin. This is 
also the center of the discs and circles $\B(\rho)$ and $\ci(\rho)$ for all 
$\rho$ in this proof. Set
\[R=\min\{\hat{R} : S(v)\cap \B(\hat{R})\not=\emptyset \text{ for all } v\in 
V_1\} =\max\{ d(0,S(v)), v\in V_1\}.\]
Then $R>0$ as $S(v_0)\cap S(v)= \emptyset$ for all $v\in V_1$.
Without loss of generality we may also assume $R=1$. As any curve
$\gamma^*\in\Gamma^*_G(V_1,V_2)$ separates $V_1\cup\{v_0\}$ from $\infty$, we 
deduce that the diameter of $\Sp(\gamma^*)$ is at least $R=1$.

Assume by contradiction that the claim fails, i.e.\ no suitable constant $C_6$ 
exists. Then there is $\rho_1\in[1,2]$ such that $V_{\ci(\rho_1)}$ does not 
separate $V_1$ and $V_2$. So there is a path $\gamma_0$ from $V_1$ to $V_2$ 
such that $\gamma_0\cap V_{\ci(\rho_1)}=\emptyset$. This implies in particular 
that $\Sp(\gamma_0)\cap \ci(\rho_1) =\emptyset$, so the 
continuum $\Sp(\gamma_0)$ 
is either contained in $\B(\rho_1)$ or in $\C\setminus \B(\rho_1)$. As 
$\B(1)\cap S(v)\not=\emptyset$ for all $v\in V_1$ by construction and as 
$\gamma_0$ is a path from $V_1$ to $V_2$, we deduce that $\B(\gamma)$ is 
contained in the interior of $\B(\rho_1)$ as $\rho_1\geq 1$.

Now consider a curve $\gamma^*\in\Gamma^*_G(V_1,V_2)$. Then
\[\Sp(\gamma^*)\cap \B(\rho_1)\supset \Sp(\gamma^*)\cap \Sp(\gamma_0)
\supset \Sp(\gamma^*\cap \gamma_0)\not=\emptyset.\]
Note that by our assumptions on $G$ every curve 
$\gamma^*\in\Gamma^*_G(V_1,V_2)$ contains a connected 
subcurve $\hat{\gamma}^*\in\Gamma^*_G(V_1,V_2)$ as $V_1$ and $V_2$ are  
disjoint. Furthermore either $\Sp(\hat{\gamma}^*)\subset 
\B(3)$ or $\Sp(\hat{\gamma}^*)$ is a continuum connecting $\ci(\rho_1)$ and 
$\ci(3)$.
Define the vertex function $\eta(v)= \min\{2r(v),6\}$
if the area of the region $\textsc{Area}(B(v)\cap 
\B(3))\geq\frac{1}{5}\pi r(v)^2$ as in the proof of Lemma~\ref{lemEstVel}. 
Furthermore, if $v\in V_{\ci(3)}$ and  if
$\textsc{Area}(B(v)\cap \B(3))<\frac{1}{5}\pi r(v)^2$ 
then let $\eta(v)$ be the maximum of the length of the arc $\ci(3)\cap S(v)$ 
and of the maximum of the  distance of a point 
in $S(v)\cap \B(3)$ to the arc $\ci(3)\cap S(v)$ as in condition~(3). Else 
set $\eta(v)=0$. Then for any curve 
$\gamma^*\in\Gamma^*_G(V_1,V_2)$ we have $\sum_{v\in\gamma^*} \eta(v)\geq 1$ as 
$1+\rho_1\leq 3$. Thus $\eta$ is $\Gamma^*_G(V_1,V_2)$-admissible. Moreover, by 
similar reasoning as in the proof of Lemma~\ref{lemEstVel} we obtain that
\[\m(\Gamma^*_G(V_1,V_2))\leq \text{area}(\eta)\leq (144C_0^2 +4C_1^2\pi^2) 
\cdot 9=:C_6.\] 
This contradicts our assumption and finishes the proof.
  \end{proof}

With the same proof as in~\cite[Corollary~6.2]{He99} (adapting the 
numerical constant to $C_6$) we obtain

\begin{corollary}
 Let $G$ be the 1-skeleton of a strongly regular cell decomposition of 
an open disc and $\mathscr C$ is a corresponding circle pattern. Assume 
that conditions~(1)--(3) hold. Then $\mathscr C$ is locally finite in $\C$ if 
and only if $G$ is $\vel$-parabolic.
\end{corollary}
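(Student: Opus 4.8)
The statement splits into two implications, and I would handle them separately. The implication ``$\mathscr C$ locally finite in $\C$ $\Rightarrow$ $G$ is $\vel$-parabolic'' is already available: it is the preceding lemma (a locally finite pattern of circles satisfying conditions~(1)--(3) has $\vel$-parabolic associated graph), whose proof chains the estimates of Lemmas~\ref{lemEstSumVel} and~\ref{lemEstVel} along a sequence of circles $\ci(R_j)$ with $R_{j+1}\ge 2R_j$. So the work is in the converse, which I would prove in contrapositive form: if $\mathscr C$ is not locally finite in $\C$, then $G$ is $\vel$-hyperbolic. The argument is the one of~\cite[Corollary~6.2]{He99}, with the numerical constants replaced by the $C_2$ of Lemma~\ref{lemEstVel} and the $C_6$ of Lemma~\ref{lemIntRad}.

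So assume $\mathscr C$ is not locally finite in $\C$, i.e.\ some compact set meets infinitely many of the sets $S(v)$. First I would extract a good accumulation point: by Corollary~\ref{corEstDiscs} together with condition~(1), only finitely many discs $I(v)$ (hence $B(v)$) of radius bounded away from $0$ can meet a fixed bounded set, so a Bolzano--Weierstrass argument produces a point which, after a translation and a scaling, we may take to be the origin, together with a sequence of distinct vertices $u_n$ with $B(u_n)\to\{0\}$ in the Hausdorff metric and $r(u_n)\to 0$. After a further scaling fix a vertex $v_0$ with $B(v_0)\cap\overline{\B(1)}=\emptyset$, where $\B(1)$ is the unit disc about $0$, and set $V_0=\{v_0\}$ and $d_0=|c(v_0)|>1$.

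Next I would produce sets $V_1,V_2$ with $\vel(V_1,V_2)>C_6$ satisfying the separation hypotheses of Lemma~\ref{lemIntRad}. Using that the $B(u_n)$ accumulate at $0$, choose radii $0<\epsilon_2<\epsilon_1<1/4$ and a chain $\epsilon_1=s_0>s_1>\dots>s_m=\epsilon_2$ with $s_k\ge 2s_{k+1}$ and $m$ large enough that $mC_2>C_6$, arranged (a generic choice avoiding the finitely many sets $S(v)$ that contain $0$ in their interior) so that every $\ci(s_k)$ about $0$ meets the pattern and the sets $V_{\ci(s_k)}$ are pairwise disjoint. Put $V_1=V_{\ci(\epsilon_1)}$ and $V_2=V_{\ci(\epsilon_2)}$; to also capture the ``outward'' end of $G$ when $\mathscr C$ is unbounded, I would enlarge each of $V_1,V_2$ by the vertex set of a circle of suitably large radius about $0$, chosen nested so that all the required separations ``$V_{i_2}$ separates $V_{i_1}$ and $V_{i_3}$'' for $0\le i_1<i_2<i_3\le 3$ (with $V_3=\infty$) hold --- the verification being the same planar/Jordan-curve bookkeeping as in Lemma~\ref{lemEstVel}, using connectedness of the sets $\B(\gamma)$ and $V_\ci$. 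Then Lemmas~\ref{lemEstSumVel} and~\ref{lemEstVel} give
\[
 \vel(V_1,V_2)\ \ge\ \sum_{k=0}^{m-1}\vel\bigl(V_{\ci(s_k)},V_{\ci(s_{k+1})}\bigr)\ \ge\ mC_2\ >\ C_6 .
\]
Now Lemma~\ref{lemIntRad} yields $R>0$ such that $V_{\ci_{v_0}(\rho)}$ separates $V_1$ from $V_2$ for all $\rho\in[R,2R]$, and from the proof of that lemma $R$ is essentially $d_0$, so $R>4\epsilon_1$. But $V_1$ and $V_2$ are concentrated inside $\B(\epsilon_1)$, at distance about $d_0$ from $c(v_0)$; hence for every $\rho\in(d_0+2\epsilon_1,2R]\subseteq[R,2R]$ the circle $\ci_{v_0}(\rho)$ is disjoint from $\B(2\epsilon_1)$, so --- discarding the finitely many huge sets $S(v)$ reaching from the origin out to $\ci_{v_0}(\rho)$ --- no vertex of $V_{\ci_{v_0}(\rho)}$ has $S(v)$ meeting $\B(2\epsilon_1)$. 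On the other hand, because $G$ is the face graph of a cell decomposition of a disc and the pattern covers the annulus $\B(\epsilon_1)\setminus\B(\epsilon_2)$, there is a path in $G$ from $V_1$ to $V_2$ all of whose vertices $v$ satisfy $S(v)\subseteq\B(2\epsilon_1)$; this path misses $V_{\ci_{v_0}(\rho)}$, contradicting the separation property. Hence $G$ is not $\vel$-parabolic.

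I expect the main obstacle to be precisely the separation bookkeeping of the third paragraph: checking that the circle sets $V_{\ci(s_k)}$ (and the enlargements needed to trap \emph{all} ends of $G$, not only the one accumulating at $0$) really separate the required vertex subsets, that they can be taken pairwise disjoint, and that a ``core'' path from $V_1$ to $V_2$ staying inside $\B(2\epsilon_1)$ exists. These are the steps where the planar structure of $G$ and conditions~(1)--(3) enter, exactly as in He's treatment of the corresponding statement.
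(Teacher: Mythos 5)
The paper itself offers no written proof here --- it simply invokes \cite[Corollary~6.2]{He99} with the constant adapted to $C_6$ --- so your reconstruction has to stand on its own. The forward implication is indeed the preceding lemma, as you say. The converse, however, has a structural flaw: your argument derives a contradiction from ``$\mathscr C$ is not locally finite'' \emph{alone} (together with conditions~(1)--(3)); the hypothesis of $\vel$-parabolicity is never used, and the conclusion ``hence $G$ is not $\vel$-parabolic'' does not follow from a contradiction that did not invoke parabolicity. If the contradiction were genuine, it would show that every pattern satisfying (1)--(3) is locally finite in $\C$, which is false: a circle pattern filling the unit disc (the object of Theorem~\ref{theoRidhyp}) satisfies (1)--(3), is not locally finite in $\C$, and has a $\vel$-hyperbolic graph.

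The step that actually fails is the separation hypothesis of Lemma~\ref{lemIntRad} with $V_3=\infty$. When the circles accumulate on a continuum (e.g.\ on $\partial\D$), an infinite path starting at $V_1=V_{\ci(\epsilon_1)}$ can converge to an accumulation point $z_1\neq 0$ lying in the annulus $\epsilon_1<|z|<P_2$, and such a path meets neither $V_{\ci(\epsilon_2)}$ nor $V_{\ci(P_2)}$; so your enlarged $V_2$ does not separate $V_1$ from $\infty$, and Lemma~\ref{lemIntRad} cannot be applied. Your enlargement only accounts for the end at spatial infinity, not for the (possibly uncountably many) ends at other finite accumulation points. The way parabolicity must enter is precisely here: one assumes $G$ is $\vel$-parabolic, takes $V_2=W_m=\{v:\mathrm{dist}_G(v_0,v)\geq m\}$ (for which all separations from $\{v_0\}$, $V_1$ and $\infty$ hold automatically) and uses parabolicity to get $\vel(V_1,W_m)>C_6$ for $m$ large; since the radius $R$ produced by Lemma~\ref{lemIntRad} depends only on $V_1$ and not on $m$, the circles $\ci_{v_0}(\rho)$, $\rho\in[R,2R]$, separate $V_1$ from every $W_m$ and hence from $\infty$, and \emph{this} is what one contradicts geometrically using the accumulating discs. (A secondary issue: your final ``core path from $V_1$ to $V_2$ inside $\B(2\epsilon_1)$'' need not exist --- the pattern may approach the accumulation point through several disjoint fingers connected only far away --- and with the enlarged $V_1$ the radius $R$ of Lemma~\ref{lemIntRad} is of order $P_1+d_0$ rather than $d_0$, which breaks the interval arithmetic $(d_0+2\epsilon_1,2R]\subseteq[R,2R]$.)
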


\subsection{Topological properties of discrete conformal maps on circle 
patterns}\label{secTopProp}
In this section we consider two circle patterns with the same combinatorics
which are contained in two given simply connected sets $D,\widetilde{D}\subset
\C$ respectively as in Theorem~\ref{theoConv}.
Our main aim is to
study topological properties of such a sequence of pairs of patterns and the 
sequence of the corresponding discrete conformal maps from 
Definition~\ref{deffdiamond}.

In the following, we denote by $d(\cdot,\cdot)$ the Euclidean distance in 
$\C\cong \R^2$ between points or sets.

We state the analogous assumptions to those in Theorem~\ref{theoConv}.
Let $D,\widetilde{D}$ be two simply connected bounded 
domains in $\C$. Let $p_0\in D$ be some ``reference'' point.
For every $n\in\N$ 
assume that ${\mathscr C}_n$ and $\widetilde{\mathscr C}_n$ are two circle 
patterns satisfying conditions~(1)--(3) whose associated graphs $G_n$ and 
$\widetilde{G}_n$ are isomorphic. Moreover, we consider the associated kite 
patterns ${\mathscr K}_n$ and $\widetilde{\mathscr K}_n$ and assume that the 
sets ${\mathscr S}_n= \bigcup_{v\in V_n} S_n(v)= \bigcup_{e\in E_n} K_n(e)$ and 
$\widetilde{\mathscr S}_n=  \bigcup_{v\in V_n} S_n(v)= \bigcup_{e\in E_n} 
K_n(e)$
are simply connected and contained in $D$ and $\widetilde{D}$ respectively. 

Let $(\delta_n)_{n\in\N}$ be a sequence of positive numbers such that 
$\delta_n\searrow 
0$ for $n\to\infty$. For each $n\in\N$ assume that $r_n(v)<\delta_n/2$ for all 
$v\in V_n$ and that the Euclidean distance of ${\mathscr S}_n$ to the boundary 
$\partial D$ is smaller than $\delta_n$, 
i.e.\ $d({\mathscr S}_n,\partial D)<\delta_n$. We also suppose that 
$d(\widetilde{\mathscr S}_n,\partial \widetilde{D})<\delta_n$. 
Furthermore, let $p_0$ be covered by some $S_n(v)$ for every $n\in\N$. 
Let $f^\Diamond_n$ be a discrete conformal map as defined above
and assume that the closure of the image points 
$\overline{(f_n^\Diamond(p_0))}_{n\in\N}$ is compact in $\widetilde{D}$.

The following two lemmas and their proofs are modifications of the 
corresponding statements in~\cite[Section~2]{HeSch98}. 

\begin{lemma}\label{lemUnifProper}
 The maps $f_n^\Diamond$ are eventually uniformly proper onto $\widetilde{D}$, 
that is, given any compact set $\widetilde{K}\subset \widetilde{D}$ there is a 
compact set 
$K\subset D$ such that $(f^\Diamond_n)^{-1}(\widetilde{K})\subset K$ for 
sufficiently large $n$.
\end{lemma}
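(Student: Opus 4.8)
The plan is to adapt the argument of He and Schramm for circle packings in \cite[Section~2]{HeSch98}, the geometric input now coming from the vertex-extremal-length estimates of Section~\ref{secVELest}. The decisive point is that $\m$ and $\vel$ are \emph{combinatorial} invariants: for the isomorphism of $G_n$ and $\widetilde G_n$ underlying $f_n^\Diamond$ one has $\vel_{G_n}(V_1,V_2)=\vel_{\widetilde G_n}(V_1,V_2)$ for all disjoint vertex sets, while ${\mathscr C}_n$ and $\widetilde{\mathscr C}_n$ realize this one number by two different circle configurations; Lemmas~\ref{lemEstVel}, \ref{lemEstSumVel} and Corollary~\ref{corEstDiscs} relate the combinatorial quantity to the geometry of each of them. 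The contradiction will be that the shrinking mesh of ${\mathscr C}_n$ forces this extremal length to blow up on the ${\mathscr C}_n$-side, whereas the fixed bounded domain $\widetilde D$ keeps it bounded on the $\widetilde{\mathscr C}_n$-side.

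First I would reformulate the claim. With $K_\epsilon(D)=\{x\in D:d(x,\partial D)\ge\epsilon\}$, which is compact since $D$ is bounded, it suffices to find, for every $\widetilde\epsilon>0$, an $\epsilon>0$ and an $n_0$ with $(f_n^\Diamond)^{-1}\big(K_{\widetilde\epsilon}(\widetilde D)\big)\subseteq K_\epsilon(D)$ for $n\ge n_0$; a given compact $\widetilde K\subseteq\widetilde D$ is then handled with $\widetilde\epsilon=d(\widetilde K,\partial\widetilde D)$ and $K=K_\epsilon(D)$. Suppose this fails. Passing to a subsequence, there are $p_n\in{\mathscr K}_n$ with $s_n:=d(p_n,\partial D)\to 0$ while $\tilde q_n:=f_n^\Diamond(p_n)$ satisfies $d(\tilde q_n,\partial\widetilde D)\ge\widetilde\epsilon$. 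Let $v_n$ be a vertex with $p_n\in S_n(v_n)$ and $w_n$ a vertex with $p_0\in S_n(w_n)$, set $\sigma:=d(p_0,\partial D)>0$ and $\tau:=d\big(\overline{(f_n^\Diamond(p_0))}_{n\in\N},\partial\widetilde D\big)>0$, and pick $b_n\in\partial D$ with $|p_n-b_n|=s_n$. Since ${\mathscr K}_n\subseteq D$ and $p_n\in{\mathscr K}_n$ is $s_n$-close to $\partial D$, the point $p_n$ is $s_n$-close to $\partial{\mathscr K}_n$ as well, so there is a boundary vertex $v_n'$ with $B(v_n')\subseteq\B_{b_n}(\rho_n^-)$, where $\rho_n^-:=2s_n+3\delta_n$; one also has $B(v_n)\subseteq\B_{b_n}(\rho_n^-)$.

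On the ${\mathscr C}_n$-side I would use the smallness of the mesh: put $\rho_n^+:=\sqrt{\rho_n^-}$, so that $3\delta_n\le\rho_n^-\to 0$, $\rho_n^+\to 0$ and $\rho_n^+/\delta_n\to\infty$, and set
\[
V_1^{(n)}=\{v:B(v)\subseteq\B_{b_n}(\rho_n^-)\},\qquad V_2^{(n)}=\{v:B(v)\cap\overline{\B_{b_n}(\rho_n^+)}=\emptyset\}.
\]
Then $v_n,v_n'\in V_1^{(n)}$, while $w_n\in V_2^{(n)}$ because $|p_0-b_n|\ge\sigma>\rho_n^++\delta_n$ for large $n$. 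No edge of $G_n$ joins $V_1^{(n)}$ to $V_2^{(n)}$, since for an edge $[v,v']$ the sets $S(v),S(v')$ overlap, so $B(v)\subseteq\B_{b_n}(\rho_n^-)$ would force $B(v')$ to meet $\B_{b_n}(\rho_n^+)$. Because $r_n<\delta_n/2$ gives $\operatorname{diam}S(v)<\delta_n\le\rho_n^-$, the vertex sets $V_{\ci_{b_n}(2^j\rho_n^-)}$ for the $\asymp\log(\rho_n^+/\rho_n^-)$ admissible $j$ are pairwise disjoint, and each of them separates $V_1^{(n)}$ from $V_2^{(n)}$: for a path $\gamma$ between those sets the continuum $\B(\gamma)$ starts inside $\B_{b_n}(2^j\rho_n^-)$ and ends outside it, hence meets $\ci_{b_n}(2^j\rho_n^-)$. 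Lemmas~\ref{lemEstVel} and~\ref{lemEstSumVel} then give a constant $c_0>0$ with
\[
\vel_{G_n}(V_1^{(n)},V_2^{(n)})\ \ge\ c_0\,\log\tfrac1{\rho_n^-}\ \xrightarrow{\,n\to\infty\,}\ \infty ,
\]
and therefore also $\vel_{\widetilde G_n}(V_1^{(n)},V_2^{(n)})\to\infty$.

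It remains to contradict this on the $\widetilde{\mathscr C}_n$-side, where the radii are not controlled, so I would pass to the dual description $\vel=\m(\Gamma^*)$ (recorded before Lemma~\ref{lemIntRad}) and use Corollary~\ref{corEstDiscs} for $\widetilde{\mathscr C}_n$. The images $\widetilde E_n$ and $\widetilde F_n$ under $f_n^\Diamond$ of the (relevant connected pieces of) $\bigcup_{v\in V_1^{(n)}}S(v)$ and $\bigcup_{v\in V_2^{(n)}}S(v)$ are disjoint connected subsets of $\widetilde{\mathscr K}_n$; $\widetilde E_n$ contains $\tilde q_n$ and meets $\partial\widetilde{\mathscr K}_n$ through the boundary vertex $v_n'$, and $\widetilde F_n$ contains $f_n^\Diamond(p_0)$ and meets $\partial\widetilde{\mathscr K}_n$. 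Since $d(\widetilde{\mathscr K}_n,\partial\widetilde D)<\delta_n$, $d(\tilde q_n,\partial\widetilde D)\ge\widetilde\epsilon$, and $d(f_n^\Diamond(p_0),\partial\widetilde D)\ge\tau$, both $\widetilde E_n$ and $\widetilde F_n$ have diameter $\ge c_1:=\tfrac12\min(\widetilde\epsilon,\tau)$ for large $n$, and both lie in $\widetilde D$. Every curve in $\widetilde G_n$ separating $V_1^{(n)}$ from $V_2^{(n)}$ contains a connected subcurve $\gamma^*$ (as in the proof of Lemma~\ref{lemIntRad}) whose continuum $\widetilde{\B}(\gamma^*)=\bigcup_{v\in\gamma^*}\tilde B(v)\subseteq\widetilde{\mathscr K}_n$ separates $\widetilde E_n$ from $\widetilde F_n$ inside the topological disc $\widetilde{\mathscr K}_n$, and an elementary planar argument bounds $\operatorname{diam}\widetilde{\B}(\gamma^*)$ below by a constant $c_2=c_2(c_1,\operatorname{diam}\widetilde D)>0$. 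Hence $\eta(v):=2\tilde r_n(v)/c_2$ is $\Gamma^*_{\widetilde G_n}(V_1^{(n)},V_2^{(n)})$-admissible, because $\sum_{v\in\gamma^*}\eta(v)\ge\operatorname{diam}\widetilde{\B}(\gamma^*)/c_2\ge1$, and Corollary~\ref{corEstDiscs} applied to $\widetilde{\mathscr C}_n$ yields
\[
\vel_{\widetilde G_n}(V_1^{(n)},V_2^{(n)})=\m\big(\Gamma^*_{\widetilde G_n}(V_1^{(n)},V_2^{(n)})\big)\ \le\ \frac{4}{c_2^{\,2}}\sum_v\tilde r_n(v)^2\ \le\ \frac{4\,C_0 N}{\pi\,c_2^{\,2}}\,\textsc{Area}(\widetilde D),
\]
a bound independent of $n$ --- contradicting the previous display. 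The main obstacle is this last paragraph: since no lower bound on the radii of $\widetilde{\mathscr C}_n$ is at hand, the upper bound on the target-side extremal length must be extracted from the dual formulation together with the observation that $\widetilde E_n$ and $\widetilde F_n$ are geometrically spread out inside the fixed domain $\widetilde D$; and on both sides one must check carefully that the relevant vertex sets are nonempty and connected as needed and that the circles $\ci_{b_n}(\rho)$ truly separate, which rests on the connectedness of $\B(\gamma)$ and of $V_\ci$ noted before Lemma~\ref{lemIntRad}.
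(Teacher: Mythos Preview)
Your lower-bound half is fine and is essentially a repackaging of the paper's ring argument: the separating circles $\ci_{b_n}(2^j\rho_n^-)$ together with Lemmas~\ref{lemEstSumVel} and~\ref{lemEstVel} do give $\vel_{G_n}(V_1^{(n)},V_2^{(n)})\to\infty$, and this is of course the same number for $\widetilde G_n$. The gap is in the last paragraph, where you try to bound this quantity from above on the $\widetilde{\mathscr C}_n$-side via the dual family and the test metric $\eta(v)=2\tilde r_n(v)/c_2$. Admissibility of $\eta$ needs every connected separator $\gamma^*$ to satisfy $\sum_{v\in\gamma^*}2\tilde r_n(v)\ge c_2$, which you justify by the claim that $\operatorname{diam}\widetilde{\B}(\gamma^*)\ge c_2(c_1,\operatorname{diam}\widetilde D)$. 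This is false: a vertex curve separating $V_1^{(n)}$ from $V_2^{(n)}$ in the finite graph $G_n$ corresponds only to a \emph{cross-cut} of the topological disc $\widetilde{\mathscr K}_n$, not to a Jordan curve surrounding one of the points; if $\widetilde D$ has a thin neck between $\tilde q_n$ and $f_n^\Diamond(p_0)$ the cut can be as short as the neck. More to the point, such short separators are not a pathology to be excluded but exactly what the paper's own argument \emph{produces}: among the separators $V_{\ci_{z_0}(\rho)}\in\Gamma^*$ one can always find one with $\sum_{v}\tilde r_n(v)<\epsilon/4$, directly contradicting your admissibility claim. So no uniform upper bound on $\vel$ is available from the target geometry, and your contradiction does not close.

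The paper takes the opposite route. It runs a Loewner-type length--area estimate: with $L(\rho)=\sum_{v\in V_{\ci_{z_0}(\rho)}}\tilde r_n(v)$ it integrates $L(\rho)/\rho$ over $\rho\in[r_0,r_1]$, applies Cauchy--Schwarz, and bounds $\sum_v\tilde r_n(v)^2$ by Corollary~\ref{corEstDiscs} on the $\widetilde{\mathscr C}_n$-side; this yields $\inf_\rho L(\rho)\lesssim(\log(r_1/r_0))^{-1/2}$, hence a \emph{single} separator with small image. Because $\ci_{z_0}(\rho)$ meets $\partial D$, each connected piece of the image $\tilde A_n(\rho)$ contains boundary edges of $\widetilde{\mathscr K}_n$ and has diameter $<\epsilon/2$, so $\tilde A_n(\rho)$ lies in a thin collar of $\partial\widetilde D$ and misses the compact $\widetilde K$; the separation then forces $f_n^\Diamond(z_0)\notin\widetilde K$. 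Your first half can be read as saying that $\log(r_1/r_0)$ is large; the correct second half is to use this to make one separator small in the target, not to make all separators large.
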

\begin{proof}
It is sufficient to consider a compact set $\widetilde{K}\subset \widetilde{D}$ 
which is connected and contains $\{f_n^\Diamond(p_0) : n\in\N\}$.

Let $\epsilon=d(\widetilde{K},\widetilde{D})$. Set $r_1=\min\{d(p_0,\partial 
D)\}/3$. Let $r_0\in(0,r_1/2)$ be a constant which is very 
small compared  to $r_1$ and $\epsilon$ and which will be specified later. 
Assume that $n$ is large enough such that $\delta_n<\min\{r_0,\eps\}/5$. Define 
$K=\{z\in D : d(z,\partial D)\geq r_0/2\}$. We will show that 
$(f^\Diamond_n)^{-1}(\widetilde{K})\subset K$. In particular, let $z_0\in 
{\mathscr S}_n\setminus K$, which implies $d(z_0,\partial D)<r_0/2$. We 
need to prove that $f_n^\Diamond(z_0)\not\in \widetilde{K}$.

For $\rho>0$ denote by $\ci(\rho)=\{z\in\C:d(z,z_0)=\rho\}$ the circle with 
radius $\rho$ about $z_0$.
Set $V_{\ci(\rho)}= \{v\in V_n: \text{interior}({S}_n(v)) \text{ 
intersects } \ci(\rho)\}$ and $L(\rho)=\sum_{v\in V_{\ci(\rho)}} 
\tilde{r}_{n}(v)$.

We now assume that there exists $\rho\in [r_0,r_1]$ such that 
$L(\rho)<\epsilon/4$ and prove this claim later.

First, by our assumptions we have $d(z_0,\partial D)\leq r_0/2 < r_1$ and
$d(\partial D, p_0)\geq 3r_1$. Thus $d(z_0, p_0)\geq 2 r_1$, so $\ci(\rho)$ 
separates $z_0$ from $p_0$.
Denote $A_n(\rho) =\bigcup_{v\in V_{\ci(\rho)}}{S}_n(v)$ and $\tilde{A}_n(\rho) 
=\bigcup_{v\in V_{\ci(\rho)}}\widetilde{S}_n(v)$.
By assumption $r_{n}(v)<\delta_n<r_0/2<r_1$, thus the set 
$A_n(\rho) \cup (\C\setminus {\mathscr S}_n)$ also separates 
$z_0$ from $p_0$ and is disjoint from $S_n(v)$ containing $p_0$.
Then $\tilde{A}_n(\rho) \cup (\C\setminus \widetilde{\mathscr S}_n)$ separates 
$f_n^\Diamond(z_0)$ from $f_n^\Diamond(p_0)$. Due to $d(z_0,\partial D)<\rho$ 
and diam$(\partial D)\geq 3r_1\geq 2\rho$ the circle $\ci(\rho)$ intersects the 
boundary $\partial D$. Therefore the sets ${A}_n(\rho)\cup (\C\setminus  
{\mathscr S}_n)$ and $\tilde{A}_n(\rho)\cup (\C\setminus 
\widetilde{\mathscr S}_n)$ 
are connected. By assumption each connected component of $\tilde{A}_n(\rho)$ 
has a diameter $\leq 2L(\rho)<\epsilon/2$ and contains boundary edges of 
$\widetilde{\mathscr S}_n$. We deduce that $\tilde{A}_n(\rho)$ is contained in 
a $(\delta_n+\epsilon/2)$-neighbourhood of the boundary 
$\partial\widetilde{D}$. As 
$\delta_n+\epsilon/2<\epsilon$ we obtain $\widetilde{K}\cap \tilde{A}_n(\rho)= 
\emptyset$.

Now consider $\widetilde{K}'$ the connected component of 
$\widetilde{K}\cap\widetilde{\mathscr S}_n$ which contains 
$f_n^\Diamond(p_0)$. We easily deduce that 
$f_n^\Diamond(z_0)\not\in \widetilde{K}'$. 
If $z_1\in\partial{\mathscr S}_n$ we also know that $d(z_1,D)\leq 
2\delta_n\leq r_0/2$. Using the same arguments as for $z_0$ we can deduce that $
f_n^\Diamond(z_1)\not\in \widetilde{K}'$. As $\widetilde{K}$ is connected we 
see that 
$\widetilde{K}'=\widetilde{K}$ and therefore $f_n^\Diamond(z_0)\not\in 
\widetilde{K}$.

Altogether, it only remains to show $\inf\{L(\rho) : \rho\in[r_0,r_1]\}< 
\epsilon/4$. To this end, denote ${\mathcal A}= \{z\in\C : r_0\leq d(z_0,z)\leq 
r_1\}$ and $V_{[r_0,r_1]}=\bigcup_{\rho\in[r_0,r_1]} V_{\ci(\rho)} =\{v\in V_n: 
\text{interior}({S}_n(v))\cap {\mathcal A}\not= \emptyset\}$. For every vertex 
$v\in V_{[r_0,r_1]}$ 
define the interval $[a_v,b_v]=\{\rho\in[r_0,r_1] : v\in A_n(\rho)\}$. 
Then we can estimate 
\begin{align*}
\inf\{L(\rho) : \rho\in[r_0,r_1]\}\log\frac{r_1}{r_0}
&\leq \int_{r_0}^{r_1} \frac{L(\rho)}{\rho}d\rho 
= \sum_{v\in V_{[r_0,r_1]}} \tilde{r}_n(v) \int_{a_v}^{b_v}\frac{1}{\rho} d\rho 
\\
&\leq \sum_{v\in V_{[r_0,r_1]}} \tilde{r}_n(v) \frac{b_v-a_v}{a_v} \\
&\leq \left(\sum_{v\in V_{[r_0,r_1]}} \tilde{r}_n(v)^2\right)^{1/2} 
\left(\sum_{v\in V_{[r_0,r_1]}} \frac{(b_v-a_v)^2}{a_v^2}\right)^{1/2}.
\end{align*}
For the last estimate we have used the Cauchy-Schwarz inequality. 
By Corollary~\ref{corEstDiscs} we know that $\sum_{v\in V_{[r_0,r_1]}} 
\tilde{r}_{{\mathscr C}_n}(v)^2  \leq 
\frac{C_0^2}{\pi}\textsc{area}(\widetilde{D})$. 
Furthermore, denote by $dA$ the Euclidean 
area element and by $Q_v$ a disc with largest radius which is contained in 
$S_n(v)\cap{\mathcal A}$.
Conditions~(2) and (3) imply that diam$(Q_v)\geq 
(1/\hat{C}_0) (b_v-a_v)$ for some constant $\hat{C}_0>0$.
Then as every point is covered at most $N=2$ times by interior points of $Q_v$ 
(according to condition~(1)), we deduce
\begin{align*}
 \int_{z\in{\mathcal A}} \frac{1}{|z-z_0|^2}dA
&\geq \frac{1}{2\hat{C}_0} \sum_{v\in V_{[r_0,r_1]}} \int_{Q_v} 
\frac{1}{|z-z_0|^2}dA \\
& \geq \frac{1}{2\hat{C}_0} \sum_{v\in V_{[r_0,r_1]}} 
\frac{1}{b_v^2}\textsc{Area}(Q_v)
\geq \frac{\pi}{8\hat{C}_0} \sum_{v\in V^*} \frac{(b_v-a_v)^2}{a_v^2}.
\end{align*}
For the last estimate we have used 
$b_v-a_v\leq\text{diam}(Q_v)\leq\delta_n$ and $a_v\geq r_0\geq \delta_n$ by our 
assumptions, so $b_v\leq 2 a_v$. Calculating the integral
\[\int_{z\in{\mathcal A}} \frac{1}{|z-z_0|^2}dA 
 =\int_{r_0}^{r_1} \int_{0}^{2\pi} \frac{1}{r^2}rdrd\varphi
=2\pi\log\frac{r_1}{r_0}, \]
we finally obtain
\begin{align*}
 \inf\{L(\rho) : \rho\in[r_0,r_1]\}
&\leq \left(\frac{C_0^2}{\pi}(\textsc{Area}(\widetilde{D}))\right)^{1/2} 
\left(\frac{16\hat{C}_0}{\log(r_1/r_0)}\right)^{1/2} <\frac{\epsilon}{4}
\end{align*}
if $r_0$ was chosen small enough. This completes the proof.
  \end{proof}

\begin{lemma}\label{lemdeltatild}
 Let $\tilde{\delta}_n$ denote the maximum diameter of the circles of 
$\widetilde{\mathscr C}_n$. Then $\tilde{\delta}_n\to 0$ for $n\to\infty$.
\end{lemma}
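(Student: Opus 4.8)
The plan is to argue by contradiction, combining the uniform properness of Lemma~\ref{lemUnifProper} with a length--area estimate of exactly the type appearing in its proof. Suppose $\tilde\delta_n\not\to 0$; passing to a subsequence we may assume there are $\eps_0>0$ and vertices $v_n\in V_n$ with $\tilde r_n(v_n)\geq\eps_0$ for all $n$. Since the disc $\tilde{B}_n(v_n)$ of radius $\geq\eps_0$ is contained in $\widetilde{\mathscr K}_n\subseteq\widetilde D$, its center $\tilde z_n$ satisfies $d(\tilde z_n,\partial\widetilde D)\geq\eps_0$, and after a further subsequence $\tilde z_n\to\tilde z_*\in\widetilde D$. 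Now choose a connected compact set $\widetilde K\subset\widetilde D$ containing $\overline{\B_{\tilde z_*}(\eps_0/2)}$ and the compact set $\overline{\{f_n^\Diamond(p_0):n\in\N\}}$. By Lemma~\ref{lemUnifProper} there is a compact $K\subset D$ with $(f_n^\Diamond)^{-1}(\widetilde K)\subset K$ for all large $n$. Since $f_n^\Diamond$ maps the center $c_n(v_n)$ of ${\mathscr C}_n(v_n)$ to $\tilde z_n\in\widetilde K$ for $n$ large, we obtain $c_n(v_n)\in K$; passing once more to a subsequence, $c_n(v_n)\to z_{**}\in K\subset D$, and we set $d_0:=d(z_{**},\partial D)>0$.

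Next I would run the length--area argument of the proof of Lemma~\ref{lemUnifProper} around $z_0:=c_n(v_n)$ with the radii $r_0=\delta_n$ and $r_1=d_0/8$. Writing $V_{\ci(\rho)}=\{v\in V_n:\text{interior}(S_n(v))\cap\ci_{z_0}(\rho)\not=\emptyset\}$ and $L(\rho)=\sum_{v\in V_{\ci(\rho)}}\tilde r_n(v)$, the Cauchy--Schwarz estimate together with Corollary~\ref{corEstDiscs} applied to $\widetilde{\mathscr C}_n$ and the identity $\int_{r_0\leq|z-z_0|\leq r_1}|z-z_0|^{-2}\,dA=2\pi\log(r_1/r_0)$ yields, exactly as there,
\[
 \inf_{\rho\in[\delta_n,\,d_0/8]}L(\rho)\ \leq\ \left(\frac{NC_0^2}{\pi}\textsc{Area}(\widetilde D)\right)^{1/2}\left(\frac{8N\hat{C}_0}{\log(d_0/(8\delta_n))}\right)^{1/2}\ \xrightarrow[n\to\infty]{}\ 0 ,
\]
because $\delta_n\searrow0$. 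Fix $\rho_n\in[\delta_n,d_0/8]$ with $L(\rho_n)\to0$.

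Since $r_n(v_n)<\delta_n/2<\rho_n$, the cell $S_n(v_n)$ lies strictly inside $\B_{z_0}(\rho_n)$, whereas for $n$ large every boundary cell of $\mathscr K_n$ lies strictly outside $\B_{z_0}(d_0/8)$ (because $d(z_0,\partial D)>d_0/2$ while $d(\mathscr K_n,\partial D)<\delta_n$). Arguing as in the proof of Lemma~\ref{lemUnifProper}, it follows that $V_{\ci(\rho_n)}$ separates $v_n$ from the boundary vertices of $G_n$, and that this separation is transported by the homeomorphism $f_n^\Diamond$. Hence the connected set $\tilde{A}_n(\rho_n)=\bigcup_{v\in V_{\ci(\rho_n)}}\widetilde S_n(v)$, which has diameter at most $2L(\rho_n)$ (a connected union of cells $\widetilde S_n(v)\subseteq\tilde B_n(v)$ of diameter $2\tilde r_n(v)$), encloses $\widetilde S_n(v_n)$ in a complementary component $\Omega_n$ with $\partial\Omega_n\subseteq\tilde{A}_n(\rho_n)$, so that $\text{diam}(\overline{\Omega_n})\leq 2L(\rho_n)$. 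On the other hand, condition~(2) for $\widetilde{\mathscr C}_n$ provides a disc $\tilde I_n(v_n)\subseteq\widetilde S_n(v_n)\subseteq\overline{\Omega_n}$ of radius at least $\tilde r_n(v_n)/C_0\geq\eps_0/C_0$, whence $\text{diam}(\overline{\Omega_n})\geq 2\eps_0/C_0>0$. This contradicts $\text{diam}(\overline{\Omega_n})\leq2L(\rho_n)\to0$, and the lemma follows.

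I expect the main obstacle to be the topological step just sketched: verifying that $V_{\ci(\rho_n)}$ genuinely separates $v_n$ from $V_\partial(G_n)$ in $G_n$ and that, under $f_n^\Diamond$, the image separator still encloses $\widetilde S_n(v_n)$, so that the macroscopic disc $\tilde I_n(v_n)$ is trapped in a region of diameter $O(L(\rho_n))$. This is the same mechanism already used in the proof of Lemma~\ref{lemUnifProper} (where a ring of $S$-cells separating two points is sent to a ring of $S$-cells separating their images), so I would carry it out by the identical argument with the two separated objects replaced by ``the cell of $v_n$'' and ``the outer boundary of $\mathscr K_n$''.
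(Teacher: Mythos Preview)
Your proof is correct and follows essentially the same route as the paper's: both invoke Lemma~\ref{lemUnifProper} to pin the centers of the offending circles into a compact subset of $D$, and then run the identical length--area/ring-separation argument (around $z_0=c_n(v)$, with $L(\rho)=\sum_{v\in V_{\ci(\rho)}}\tilde r_n(v)$) to trap $\widetilde S_n(v)$ in a region of small diameter, forcing $\tilde r_n(v)$ to be small. The paper phrases this directly (given $\eps>0$, choose $r_0$ small and split into the two cases $S_n(v)\cap K=\emptyset$ and $S_n(v)\cap K\neq\emptyset$), whereas you phrase it as a contradiction and let $r_0=\delta_n\to 0$ so that $\log(r_1/r_0)\to\infty$; these are equivalent packagings of the same estimate. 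The topological step you flag as a potential obstacle is exactly what the paper does in one line (``$\tilde A(\rho)$ separates $\widetilde B_n(v)$ from $\partial\widetilde D$''), using that $\ci_{z_0}(\rho)$ lies entirely inside $\mathscr K_n$ so that $V_{\ci(\rho)}$ is a connected separating curve whose image under $f_n^\Diamond$ still separates.
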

\begin{proof}
Let $\eps>0$. We will show that $\tilde{r}_n(v) <C_0\eps/2$ for all $v\in V_n$ 
if $n$ is large enough.

Let $\widetilde{K}\subset \widetilde{D}$ be a compact set such that 
$\{z\in\widetilde{D} : 
d(z,\partial\widetilde{D})\geq \eps/2\}\subseteq \widetilde{K}$. By 
Lemma~\ref{lemUnifProper}, there exists a compact set $K\subset D$ such that 
$(f^\Diamond_n)^{-1}(\widetilde{K})\subseteq K$ for sufficiently large $n$. 

Let $v\in V_n$ and $n$ be large enough. If ${S}_n(v)\cap K =\emptyset$ then
$\tilde{r}_n(v)\leq C_0\eps/2$ and also
$d(\tilde{c}_n(v),\partial\widetilde{D})\geq \tilde{r}_n(v)/C_0$. 

Set $\eps'=\min\{\eps,\text{diam}(\partial\widetilde{D})\}$ and
$r_1=d(K,\partial D)/3$. Let $r_0\in(0,r_1/3)$ be very small compared with 
$r_1$ and $\eps'$ and $n$ sufficiently large such that $\delta_n<r_0/3$.
Let $v\in V_n$ with ${B}_n(v)\cap K \not=\emptyset$. Then 
$d({B}_n(v),\partial D)\geq 3r_1-\delta_n > 2r_1$ and $d({\mathscr 
C}_n(v),\partial {\mathscr S}_n)> 2r_1-2\delta_n > r_1$.
Set $z_0=c_n(v)$. We proceed as in the proof of Lemma~\ref{lemUnifProper} and 
define $\ci(\rho)$, $V(\rho)$ and $\tilde{A}(\rho)$ for $\rho\in[r_0,r_1]$ in 
the 
same manner. Then we deduce that $\tilde{A}(\rho)$ separates 
$\widetilde{B}_n(v)$ from $\partial\widetilde{D}$.
Applying the same arguments as in the proof of Lemma~\ref{lemUnifProper} shows 
that for sufficiently small $r_0>0$ there exists $\rho\in[r_0,r_1]$ such that 
diam$(\tilde{A}(\rho))<\eps'/2$. As $\tilde{A}(\rho)$ separates 
$\widetilde{\mathscr C}_n(v)$ from $\partial\widetilde{D}$ and 
diam$(\partial\widetilde{D})\geq\eps'$ this implies that $\tilde{r}_n(v)\leq 
\eps'/4<\eps/2$.
  \end{proof}

The preceding lemmas imply the following corollary.

\begin{corollary}\label{corKtildeK}
Let $K\subset D$ and $\widetilde{K}\subset \widetilde{D}$ be two compact sets. 
Then 
the following statements hold for sufficiently large $n$.
\begin{enumerate}[(i)]
 \item $\text{area}({\mathscr S}_n)\supset K$.
\item $\text{area}(\widetilde{\mathscr S}_n)\supset \widetilde{K}$.
\item There exists a compact set $\widetilde{K}'\subset \widetilde{D}$ such 
that 
$f_n^\Diamond(K)\subset \widetilde{K}'$.
\end{enumerate}
\end{corollary}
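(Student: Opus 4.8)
The plan is to handle \ref{corKtildeK}(i) and (ii) together by an elementary planar-topology observation, and then to deduce (iii) from Lemma~\ref{lemUnifProper} applied to the \emph{inverse} maps.

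For (i) (and (ii) is literally the same statement with the roles of $D,{\mathscr C}_n$ and $\widetilde D,\widetilde{\mathscr C}_n$ interchanged): the set ${\mathscr K}_n$ covered by the pattern is a closed topological disc contained in $D$, and every boundary edge of the pattern borders a kite, so it lies within $\delta_n$ of $\partial D$ — this is the operative content of the hypothesis $d({\mathscr K}_n,\partial D)<\delta_n$ (and is exactly what was already used in the proof of Lemma~\ref{lemUnifProper}). Hence $\partial{\mathscr K}_n$ is a Jordan curve contained in the $\delta_n$-neighbourhood of $\partial D$, and $D\setminus{\mathscr K}_n$ is the connected annular region lying between $\partial{\mathscr K}_n$ and $\partial D$. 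I would then show that $D\setminus{\mathscr K}_n$ is itself contained in the $\delta_n$-neighbourhood of $\partial D$: a point $x\in D\setminus{\mathscr K}_n$ with $d(x,\partial D)\ge\delta_n$ lies in the unbounded complementary component of the Jordan curve $\partial{\mathscr K}_n$, hence can be joined to $\partial D$ by an arc inside $D\setminus{\mathscr K}_n$; following such an arc and using that $\partial{\mathscr K}_n$ separates ${\mathscr K}_n$ from $\partial D$ forces a point of $\partial{\mathscr K}_n$ at distance more than $\delta_n$ from $\partial D$, a contradiction. Given a compact $K\subset D$, choosing $n$ so large that $\delta_n<d(K,\partial D)$ then gives $K\cap(D\setminus{\mathscr K}_n)=\emptyset$, i.e.\ $K\subset{\mathscr K}_n$; and symmetrically for (ii).

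For (iii) the key observation is that the notion of discrete conformal map on patterns of circles defined in this section is symmetric in the two patterns, so each $(f_n^\Diamond)^{-1}$ is itself a discrete conformal map from $\widetilde{\mathscr C}_n$ to ${\mathscr C}_n$. I would verify that the family $\bigl((f_n^\Diamond)^{-1}\bigr)_{n}$ satisfies the hypotheses preceding Lemma~\ref{lemUnifProper} with $D$ and $\widetilde D$ exchanged: the patterns $\widetilde{\mathscr C}_n$ satisfy conditions~(1)--(3) with $\widetilde{\mathscr K}_n$ simply connected by assumption; $\tilde r_n(v)\le\tilde\delta_n/2$ with $\tilde\delta_n\searrow 0$ by Lemma~\ref{lemdeltatild}; and $d(\widetilde{\mathscr K}_n,\partial\widetilde D)<\delta_n\searrow 0$ by assumption. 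For the reference-point normalisation, fix $\tilde q:=f_{n_0}^\Diamond(p_0)$ for some index $n_0$; applying Lemma~\ref{lemUnifProper} to the compact set $\widetilde K:=\overline{(f_n^\Diamond(p_0))_{n\in\N}}\subset\widetilde D$ shows that $(f_n^\Diamond)^{-1}(\widetilde K)$, and in particular $(f_n^\Diamond)^{-1}(\tilde q)$, lies in a fixed compact subset of $D$ for large $n$, while part~(ii) (applied to the singleton $\{\tilde q\}$) shows $\tilde q$ is covered by some $\widetilde S_n(v)$ for large $n$, which is all that the proof of Lemma~\ref{lemUnifProper} actually uses. Hence Lemma~\ref{lemUnifProper} applies to $\bigl((f_n^\Diamond)^{-1}\bigr)_n$: these maps are eventually uniformly proper onto $D$. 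For the given compact $K\subset D$ this yields a compact $\widetilde K'\subset\widetilde D$ with $f_n^\Diamond(K\cap{\mathscr K}_n)\subset\widetilde K'$ for large $n$, and since $K\subset{\mathscr K}_n$ eventually by part~(i), we obtain $f_n^\Diamond(K)\subset\widetilde K'$, which is (iii).

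The main obstacle I anticipate is the planar-topology step in (i)/(ii): turning "$\partial{\mathscr K}_n$ hugs $\partial D$'' into "$D\setminus{\mathscr K}_n$ is a thin collar of $\partial D$''. The contradiction argument sketched above should do it cleanly using only that $\partial{\mathscr K}_n$ is a Jordan curve with ${\mathscr K}_n$ as its bounded complementary component and that $\partial D$ is connected; everything else is bookkeeping on top of the already-established Lemmas~\ref{lemUnifProper} and~\ref{lemdeltatild}.
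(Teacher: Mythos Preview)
The paper gives no explicit proof of this corollary---it simply records that it follows from Lemmas~\ref{lemUnifProper} and~\ref{lemdeltatild}---so your proposal is supplying the intended details. Your treatment of (iii) is exactly right and is surely what the author has in mind: the definition of discrete conformal map is symmetric, Lemma~\ref{lemdeltatild} supplies the missing radius bound $\tilde r_n(v)<\tilde\delta_n/2\to 0$ for the reversed family, and Lemma~\ref{lemUnifProper} applied to $(f_n^\Diamond)^{-1}$ then gives the compact $\widetilde K'$.

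There is, however, a genuine gap in your argument for (i)/(ii). Your contradiction step does not go through: the arc you build from $x$ to $\partial D$ lies entirely in $D\setminus{\mathscr K}_n$, so it never meets $\partial{\mathscr K}_n$ at all, and nothing ``forces a point of $\partial{\mathscr K}_n$ at distance more than $\delta_n$ from $\partial D$''. The fix is to use the reference point $p_0$ rather than $\partial D$. Enlarge $K$ to a compact \emph{connected} set $K'\subset D$ containing $K\cup\{p_0\}$ (possible since $D$ is open and connected), and set $\epsilon'=d(K',\partial D)>0$. For $n$ so large that every point of $\partial{\mathscr K}_n$ lies within $\delta_n<\epsilon'$ of $\partial D$, the Jordan curve $\partial{\mathscr K}_n$ is disjoint from $K'$; since $K'$ is connected it lies in a single complementary component of $\partial{\mathscr K}_n$, and since $p_0\in K'\cap{\mathscr K}_n$ that component is the bounded one. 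Hence $K\subset K'\subset{\mathscr K}_n$. The same argument with tildes gives (ii) once Lemma~\ref{lemdeltatild} is in hand.
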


\begin{remark}
 For unbounded simply connected domain we can consider the stereographic 
 projections of the pattern of circles to the sphere. Assume that these
 patterns of circles on $\Sp^2$ satisfy conditions~(1)--(3) analogously, where 
Euclidean distances have to be replaced by spherical distances.
 Then Lemmas~\ref{lemUnifProper} and~\ref{lemdeltatild} and 
 Corollary~\ref{corKtildeK} also hold for the corresponding discrete conformal 
maps
 similarly as in the proof by Schramm and He in~\cite{HeSch98}.
\end{remark}

\begin{remark}
 Lemmas~\ref{lemUnifProper} and~\ref{lemdeltatild} and 
Corollary~\ref{corKtildeK}
 also hold in the following case (with the same proofs).
 
 Consider two sequences of triangulations in $D$ and $\widetilde{D}$ with the
 same combinatorics whose
 angles are uniformly bounded from above and below. Let ${\mathscr C}_n$ and
 $\widetilde{\mathscr C}_n$ be the corresponding patterns of circles built from 
the 
 circumcircles and define the sets $B(v)$, $I_T(v)$ and $S_T(v)$ as
in Remark~\ref{remdefStriang}.
Set $f^\Diamond_n$ to be the piecewise linear map on the corresponding 
triangulations.
\end{remark}

\section{Convergence of circle patterns}\label{secConv}

In this section we prove Theorem~\ref{theoConv}. This convergence 
result holds for circle patterns with convex kites whose angles are all bounded 
uniformly away from zero and $\pi$. 
This is a natural generalization of circle patterns with regular combinatorics 
for example square grid or hexagonal, or of isoradial circle patterns with 
bounded intersection angles.
To be precise, we set

\begin{definition}\label{defqbound}
Let $q>1$. Let $\mathscr D$ be a b-quad-graph. A (planar) circle 
pattern ${\mathscr C}$ for $\mathscr D$ and some admissible labelling 
$\alpha:E(G)\to (0,\pi)$ is called {\em convex q-bounded circle pattern} if all 
kites of the corresponding kite pattern ${\mathscr K}$ are convex and if the 
ratios of the lengths of the diagonals of the kites are uniformly bounded in 
$[1/q, q]$.
\end{definition}

Of course, any finite circle pattern is q-bounded for some suitable q, so the 
above notion is more important for sequences of circle patterns or infinite 
circle patterns. Also note, 
that the uniform boundedness of the ratios of the lengths of the diagonals of 
the kites is equivalent to the uniform boundedness of the angles of the kites 
in $(c,\pi-c)$ for some $c>0$. Furthermore, q-boundedness implies uniform 
boundedness of the degree of the vertices in the corresponding graph $G$.

For convenience we recall the assumptions of Theorem~\ref{theoConv}:
Let $D$ and $\widetilde{D}$ be two simply connected bounded domains in $\C$. 
Let $p_0\in D$ be some ``reference'' point.
For $n\in\N$ let $({\mathscr D}_n)_{n\in\N}$ be a sequence of finite 
b-quad-graphs which are cell decompositions of $D$. 
Let  ${\mathscr C}_n$ and $\widetilde{\mathscr C}_n$ are two embedded 
convex q-bounded
(planar) circle patterns for ${\mathscr D}_n$ and some admissible labelling 
$\alpha_n$ whose kites all lie in $D$ and $\widetilde{D}$ respectively. 
Let $(\delta_n)_{n\in\N}$ be a sequence of positive numbers such that 
$\delta_n\searrow 
0$ for $n\to\infty$. For each $n\in\N$ assume that $r_n(v)<\delta_n/2$ for all 
$v\in V_n(G)$ .
Further suppose that $d({\mathscr S}_n,\partial D)<\delta_n$ and
$d(\widetilde{\mathscr S}_n,\partial \widetilde{D})<\delta_n$. 
Let $p_0$ be covered by a kite for every $n\in\N$ and let the 
closure of the image points $\overline{(f_n^\Diamond(p_0))}_{n\in\N}$ be 
compact in $\widetilde{D}$.

Our proof is inspired by ideas used in~\cite{RS87,HeSch96,Bue08}.
First note that for $n$ large enough the maps 
$f_n^\Diamond$ are eventually uniformly proper by Lemma~\ref{lemUnifProper}
and Lemma~\ref{lemdeltatild} implies that $\tilde{\delta}_n\to 0$. Also, 
Corollary~\ref{corKtildeK} holds.

Combining Corollary~\ref{corKtildeK} with the generalized maximum principle in 
the hyperbolic plane (Lemma~\ref{lemMaxHypGen}) we obtain bounds on the 
quotients of radii of both patterns.

\begin{lemma}\label{lemEstrn}
 Let $K\subset D$ be a compact set. Then 
there are constants $\hat{C}_1=\hat{C}_1(D,K)$ and 
$\hat{C}_2=\hat{C}_2(\widetilde{D},K)$, 
depending only on $D$, $\widetilde{D}$ and $K$, such that for $n$ large enough 
and 
all $v\in V_n$ with ${\mathscr C}_n(v)\subset K$ there holds
\begin{align}\label{eqestrn}
r_n(v)\geq \hat{C}_1 \tilde{r}_n(v)\qquad \text{and} \qquad 
 \tilde{r}_n(v)\geq \hat{C}_2 r_n(v).
\end{align}
\end{lemma}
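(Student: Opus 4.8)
The plan is to use the generalized hyperbolic maximum principle (Lemma~\ref{lemMaxHypGen}) twice, once in each direction, comparing each finite circle pattern against the other after both have been placed in a common copy of the hyperbolic disc $\D$. First I would fix a compact set $K\subset D$ and choose an auxiliary compact set $\widetilde K'\subset\widetilde D$ with $f_n^\Diamond(K)\subset\widetilde K'$ for large $n$, which exists by Corollary~\ref{corKtildeK}(iii). Since $\widetilde K'$ is compact in the open set $\widetilde D$, there is a fixed disc $\D_0\subset\widetilde D$ (a round disc strictly inside $\widetilde D$) containing $\widetilde K'$, and hence containing all the circles $\widetilde{\mathscr C}_n(v)$ with $\widetilde{\mathscr C}_n(v)\subset\widetilde K'$, for $n$ large. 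By an affine rescaling I identify $\D_0$ with the standard unit disc $\D$ equipped with its hyperbolic metric; this converts the Euclidean circle pattern $\widetilde{\mathscr C}_n$ (restricted to the relevant vertices) into circles in $\D$ with well-defined hyperbolic radii.

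The key geometric input is that a small Euclidean circle deep inside $\D$ has hyperbolic radius comparable to its Euclidean radius up to bounded factors depending only on how deep it sits, and that a Euclidean circle crossing $\partial\D$ meets $\partial\D$ at some exterior angle $\beta\in[0,\pi)$. Concretely: rescale so that the circles $\mathscr C_n(v)$ with $\mathscr C_n(v)\subset K$ sit inside a fixed round disc strictly contained in $D$, and identify that disc with $\D$; then $\mathscr C_n$ becomes a generalized hyperbolic circle pattern $\mathscr C^*$ in the sense of the definitions in Section~\ref{SecRigidity} — circles well inside $\D$ keep a finite hyperbolic radius, while circles reaching out of the chosen disc are recorded through their angle $\beta$. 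Meanwhile $\widetilde{\mathscr C}_n$, after the identification of $\D_0$ with $\D$ above, is a genuine hyperbolic circle pattern (all its relevant circles lie inside), so Lemma~\ref{lemMaxHypGen} applies with ${\mathscr C}=\widetilde{\mathscr C}_n$ and ${\mathscr C}^*=\mathscr C_n$: at every boundary vertex of the subgraph, either the corresponding circle of $\mathscr C_n$ reaches out of the disc (so the vertex is in $V_{\partial,\beta}$) or it lies inside and there one checks directly that $r_{\text{hyp}}^*(v)\geq r_{\text{hyp}}(v)$ because both radii are bounded below by a fixed positive constant while $\widetilde{\mathscr C}_n$'s circle is comparably small by Lemma~\ref{lemdeltatild}. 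The maximum principle then propagates $r_{\text{hyp}}(\mathscr C_n)(v)\leq r_{\text{hyp}}(\widetilde{\mathscr C}_n)(v)$ to all interior vertices, and translating back to Euclidean quantities via the comparability of hyperbolic and Euclidean radii on compacta yields $\tilde r_n(v)\geq \hat C_2 r_n(v)$; symmetrically, swapping the roles of the two patterns and of $D,\widetilde D$ gives $r_n(v)\geq\hat C_1\tilde r_n(v)$.

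The main obstacle is the bookkeeping at the boundary: one must choose, for a given compact $K$, a slightly larger compact set and a round disc containing its $f_n^\Diamond$-image so that (a) every circle $\widetilde{\mathscr C}_n(v)$ with $\mathscr C_n(v)\subset K$ genuinely lies inside the chosen disc (using $\tilde\delta_n\to 0$ from Lemma~\ref{lemdeltatild} and the fact that $f_n^\Diamond(K)$ stays in a fixed compactum), and (b) the boundary vertices of the subgraph on which the maximum principle is run are handled — either they are in $V_{\partial,\beta}$ or the crude bound ``big-from-below versus small'' holds there. One must also make sure the constant relating hyperbolic and Euclidean radii depends only on $D$, $\widetilde D$ and $K$ (through the fixed distance of the chosen discs to the respective boundaries), not on $n$ or on the individual pattern — this is where the bounded-geometry of the Poincaré metric on a fixed subdisc is used. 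No convexity or $q$-boundedness of the kites enters this particular lemma beyond what is needed to invoke Corollary~\ref{corKtildeK} and Lemma~\ref{lemdeltatild}; the real work is purely the two applications of Lemma~\ref{lemMaxHypGen} together with the elementary comparison of the two conformal metrics.
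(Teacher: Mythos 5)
Your overall strategy --- two applications of the generalized hyperbolic maximum principle (Lemma~\ref{lemMaxHypGen}), one in each direction, after rescaling the patterns into copies of $\D$, combined with Corollary~\ref{corKtildeK} and the comparability of hyperbolic and Euclidean radii on compact subsets of $\D$ --- is exactly the paper's. Two steps as written, however, do not go through.

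First, a compact set $\widetilde{K}'\subset\widetilde{D}$ need not be contained in any single round disc $\D_0\subset\widetilde{D}$ (take $\widetilde{D}$ a long thin rectangle), so the global identification $\D_0\cong\D$ is not available; the same objection applies to your round disc in $D$ containing all circles over $K$. The paper localizes instead: it covers $\widetilde{K}\supset f_n^\Diamond(K)$ by finitely many discs $\B_{z_0}(\rho/2)$ with $\B_{z_0}(\rho)\subset\widetilde{D}$ and runs the comparison once per disc, taking the worst constant. Second, and more seriously, your treatment of the boundary vertices not in $V_{\partial,\beta}$ is circular. You assert $r^*_{\text{hyp}}(v)\geq r_{\text{hyp}}(v)$ there ``because both radii are bounded below by a fixed positive constant while $\widetilde{\mathscr C}_n$'s circle is comparably small''; but the radii of ${\mathscr C}_n$ are themselves $<\delta_n/2\to 0$, so no such lower bound exists, and comparing a small source circle with a small image circle at a boundary vertex is precisely the statement the lemma is meant to establish. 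The paper sidesteps this entirely: the subpattern is defined as the set of vertices whose circles of the \emph{generalized} pattern ${\mathscr C}^*$ meet the interior of the target disc $\B_{z_0}(\rho)$, so every boundary vertex of the subgraph has a ${\mathscr C}^*$-circle crossing $\partial\B_{z_0}(\rho)$ and automatically lies in $V_{\partial,\beta}$, while the pattern playing the role of ${\mathscr C}$ is scaled by $1/R$ with $D\subset\{|z|\leq R/2\}$ so that it sits in $\tfrac12\D$ with no restriction needed on its side. Finally, note that with your assignment ${\mathscr C}=\widetilde{\mathscr C}_n$, ${\mathscr C}^*={\mathscr C}_n$ the lemma yields $r_{\text{hyp}}({\mathscr C}_n)\geq r_{\text{hyp}}(\widetilde{\mathscr C}_n)$, i.e.\ the first inequality of~\eqref{eqestrn}, not the second as you state; to obtain $\tilde r_n\geq\hat C_2 r_n$ the source pattern must be the one contained in $\D$ and the image pattern the generalized one.
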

\begin{proof}
 We start with the second estimate.

Let $\B(\rho)=\B_{z_0}(\rho)= 
\{z\in\C : |z-z_0|\leq \rho\}\subset \widetilde{D}$ be some 
closed disc contained in $ \widetilde{D}$ and denote 
$\B(\rho/2) =\{z\in\C : |z-z_0|\leq \rho/2\}$. Let $n$ be large 
enough such that $\B(\rho) \subset\widetilde{\mathscr S}_n$. Let 
$R>0$ be 
such that $D\subset\{z\in\C : |z|\leq R/2\}$. Consider the part ${\mathscr 
C}_n^{\B(\rho)}$ of the circle pattern whose image circles 
$\widetilde{\mathscr  C}_n(v)$ have non-empty intersection with 
the interior of $\B(\rho)$. Scale 
both patterns by $1/R$ and $1/\rho$ respectively. Now we can apply 
Lemma~\ref{lemMaxHypGen} and deduce that the hyperbolic radius of a circle in 
$\frac{1}{R}{\mathscr C}_n^{\B(\rho)}$ is smaller than the hyperbolic radius 
of the corresponding circle in $\frac{1}{\rho} \widetilde{\mathscr 
C}_n^{\B(\rho)}$. As hyperbolic and Euclidean radii are comparable for circles 
in $\frac{1}{2}\D$ we finally obtain $\tilde{r}_n(v)\geq 
\hat{C}_0\frac{\rho}{R}r_n(v)$ for some universal constant $\hat{C}_0$.

Now let $K\subset D$ be a compact set. By Corollary~\ref{corKtildeK} there 
exists a compact set $\widetilde{K}$ such that $f_n^\Diamond(K) 
\subset\widetilde{K}$ 
for $n$ large enough. As $\widetilde{K}$ is compact, it can be covered by 
finitely many closed discs $\B_{z_0}(\rho/2) =\{z\in\C : |z-z_0|\leq 
\rho/2\}$ such that $\B_{z_0}(\rho)= \{z\in\C : |z-z_0|\leq \rho\}$ is 
contained 
in 
$\widetilde{D}$. Applying the above reasoning for all these discs we deduce 
that the second inequality of~\eqref{eqestrn} holds for $n$ large enough. 

The first estimate can be obtained similarly by interchanging the roles of $D$ 
and $\widetilde{D}$.
  \end{proof}

\begin{corollary}
 Let $K\subset D$ be a compact set. Then the restricted homeomorphisms 
$\{f_n^\Diamond|_K\}$ form a $k$-quasiconformal normal family, where the 
constant $k$ only depends on $D$, $\widetilde{D}$ and $K$.
\end{corollary}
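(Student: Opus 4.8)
The plan is to show that on every triangle of the triangulated kite pattern ${\mathscr K}_n$ the map $f_n^\Diamond$ is an orientation‑preserving affine bijection whose dilatation is bounded by a constant $k=k(q)$, and then to invoke the standard compactness theory for quasiconformal maps. Throughout, $q>1$ is the fixed bound from Theorem~\ref{theoConv}, so ``depending on $q$'' is the same as ``depending on nothing''.

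First I would fix a compact set $K\subset D$ and enlarge it: since $r_n(v)<\delta_n/2$ with $\delta_n\searrow 0$, every kite of ${\mathscr K}_n$ that meets $K$ has diameter at most $\delta_n$, so for a suitable compact $K'\subset D$ with $K\subset\mathrm{int}(K')$ we get, for $n$ large, that every such kite together with its three vertices and the circles ${\mathscr C}_n(v)$ at those vertices is contained in $K'$. Applying Corollary~\ref{corKtildeK}(i) to $K'$ we may also assume $K'\subset\mathrm{area}({\mathscr K}_n)$, so that $f_n^\Diamond$ is a homeomorphism defined on the open set $\mathrm{int}(K')\supset K$; moreover $f_n^\Diamond(K)\subset\widetilde{\mathscr K}_n\subset\widetilde D$ is uniformly bounded.

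Next I would estimate the dilatation of $f_n^\Diamond$ on a single triangle $\Delta=w_0bw_1$ (two white vertices $w_0,w_1$ and one black vertex $b$) of such a kite. Because the kite is convex and $q$‑bounded, all four of its angles lie in $(c,\pi-c)$ for some $c=c(q)>0$; hence $\Delta$ has its angle at $b$ in $(c,\pi-c)$ and its angles at $w_0,w_1$ in $(c/2,(\pi-c)/2)$. The image triangle $\widetilde\Delta=f_n^\Diamond(\Delta)$ has the \emph{same} angle at $\tilde b$ (intersection angles are preserved) and legs $\tilde r_n(w_0),\tilde r_n(w_1)$, and since $\widetilde{\mathscr C}_n$ is also convex $q$‑bounded it is uniformly non‑degenerate as well. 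An orientation‑preserving affine bijection between two triangles whose angles are all bounded away from $0$ and $\pi$ has dilatation bounded solely in terms of these angle bounds: write $f_n^\Diamond|_\Delta=\widetilde A_n\circ A_n^{-1}$, where $A_n,\widetilde A_n$ are the affine maps from a fixed reference triangle onto $\Delta,\widetilde\Delta$ respecting the vertex labelling, and note $K(f_n^\Diamond|_\Delta)\le K(A_n)K(\widetilde A_n)\le k(q)$ since each factor depends only on the shape of the target triangle. (Alternatively, with $b$ at the origin and one leg on the positive real axis, the linear part of $f_n^\Diamond|_\Delta$ is upper triangular with diagonal entries $\tilde r_n(w_0)/r_n(w_0)$, $\tilde r_n(w_1)/r_n(w_1)$ and off‑diagonal entry a $\cot$‑multiple of their difference; Lemma~\ref{lemEstrn} bounds all three entries and gives the same conclusion.) Being piecewise affine, $f_n^\Diamond$ is locally Lipschitz, hence ACL, so this a.e.\ bound shows that $f_n^\Diamond|_{\mathrm{int}(K')}$ is $k$‑quasiconformal.

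Finally, $\{f_n^\Diamond|_K\}$ consists of $k$‑quasiconformal maps defined on the fixed neighbourhood $\mathrm{int}(K')$ of $K$ with uniformly bounded images; by the standard Hölder modulus‑of‑continuity estimate for quasiconformal mappings on compact subsets away from the boundary (depending only on $k$, on $\mathrm{diam}\,\widetilde D$ and on $d(K,\partial\,\mathrm{int}(K'))$), the family is equicontinuous on $K$, hence by Arzelà–Ascoli it is normal, and every subsequential limit is $k$‑quasiconformal or constant. The step that requires genuine care is the geometric one: checking that both triangles $\Delta,\widetilde\Delta$ are uniformly non‑degenerate (this is where convexity and $q$‑boundedness of both patterns — equivalently, $q$‑boundedness together with Lemma~\ref{lemEstrn} — are used) and carrying out the reduction to it via the enlargement $K\mapsto K'$; the passage to a normal family is then routine.
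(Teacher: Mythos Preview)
Your proof is correct and follows essentially the same route as the paper: both argue that $f_n^\Diamond$ is piecewise affine on the triangles obtained by splitting the kites along their white--white diagonals, and bound the dilatation of each affine piece. The paper's version is precisely your ``alternatively'' computation: it observes that the linear part has eigenvalues $\tilde r_n(v_0)/r_n(v_0)$ and $\tilde r_n(v_1)/r_n(v_1)$ along the two radii (your upper–triangular matrix), bounds these via Lemma~\ref{lemEstrn}, and uses that the angle between the eigenvectors (the kite angle at the black vertex) is bounded by $q$-boundedness.

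Your primary argument is a mild but genuine simplification worth noting: since \emph{both} patterns are $q$-bounded, both triangles $\Delta$ and $\widetilde\Delta$ have all angles in a fixed interval $[c(q),\pi-c(q)]$, and the dilatation of an affine map between two labelled triangles with angles so bounded is controlled by $c(q)$ alone (compactness of the space of such triangle shapes). This bypasses Lemma~\ref{lemEstrn} entirely and yields a dilatation constant $k=k(q)$ that does not depend on $K$, $D$, or~$\widetilde D$ --- slightly sharper than what the corollary states. The paper needs Lemma~\ref{lemEstrn} at this stage only because it tracks the eigenvalues explicitly rather than appealing to the shape bound for~$\widetilde\Delta$. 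Your added discussion of the enlargement $K\mapsto K'$ and the passage to normality via the H\"older estimate and Arzel\`a--Ascoli is correct and makes explicit what the paper leaves to the reference~\cite{LV73}.
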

\begin{proof}
 The homeomorphisms $f_n^\Diamond$ are affine linear maps on every triangle 
obtained by dividing kite $K_e$ of ${\mathscr K}_n$ by the line corresponding 
to the edge $e=[v_0,v_1]\in E_n$. By construction the corresponding linear map 
has two eigenvalues $\tilde{r}_n(v_0)/r_n(v_0)$ and 
$\tilde{r}_n(v_1)/r_n(v_1)$. These two values are bounded from above and 
below by Lemma~\ref{lemEstrn} on compact sets. Also, the angle between the two
eigenvectors is bounded independently of $n$. Thus the quasiconformal 
distortion is bounded on compact sets if $n$ is large enough.
  \end{proof}

Using results from the theory of quasiconformal mappings (see~\cite{LV73} for 
example) we deduce that for 
every compact set $K\subset D$ there is a subsequence of 
$\{f_n^\Diamond|_K\}$ which converges uniformly on compact subsets of the 
interior int$(K)$ to some function $g_K$ which is $k$-quasiconformal or 
a mapping with exactly one or two values. Lemma~\ref{lemEstrn} implies that 
$g_K$ must be a $k$-quasiconformal homeomorphism. In fact, we can deduce 
from~\eqref{eqestrn} that the length of the image of a curve in 
$K$ is bounded from below (and above) by a constant $C>0$ times the original 
length:
\begin{align}\label{eqestgamma}
C\cdot \text{length}(\gamma)\leq 
\text{length}(f_n^\Diamond(\gamma))\leq\frac{1}{C} \cdot
\text{length}(\gamma).
\end{align}
In particular, the image of a disc of radius $R$ about $v_0$ contains at least 
a disc about $f_n^\Diamond(v_0)$ with radius $CR$.

For $v\in V_n$ define $u_n(v)=\tilde{r}_n(v)/r_n(v)$.
We have seen that the quasiconformal constant $k=k(n,K)$ for $f^\Diamond_n$ 
has an upper bound which depends on the maximum (and minimum) of the 
quotient ${u_n}(v_1)/u_n(v_0)$ for edges $e=[v_0,v_1]$ lying in $K$. We will 
show that $k(n,K)$ converges to $1$ for $n\to\infty$. This implies that $g_K$ 
is in fact conformal.

Define two Laplacians $\Delta$ and $\widetilde{\Delta}$ on $G_n$ 
by~\eqref{eqdeflap}, where 
\begin{align}
\mu([v_0,v_k])&= 2f_{\alpha_n({[v_0,v_k]})}'(\log ({r}_n(v_k)/{r}_n(v_0)))
\quad \text{and}\label{defmu1} \\
\tilde{\mu}([v_0,v_k])&= 2f_{\alpha_n({[v_0,v_k]})}'(\log 
(\tilde{r}_n(v_k)/\tilde{r}_n(v_0))). \label{defmu2}
\end{align} 

\begin{lemma}\label{lemsubharm}
For all interior vertices $v$ of $V_n$ there hold $\Delta u_n(v)\geq 0$ and 
$\widetilde{\Delta} (1/u_n)(v)\geq 0$, that is
 the functions $u_n$ and $1/u_n$ are subharmonic on $G_n$ (with respect to 
different Laplacians).
\end{lemma}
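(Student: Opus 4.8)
The plan is to reduce the statement to a single concavity property of the functions $f_\theta$ and then feed in the closing conditions~\eqref{eqFgen} for \emph{both} patterns. Write $\psi_\theta(y):=f_\theta(\log y)$ for $y>0$, fix an interior vertex $v_0$ with neighbours $v_1,\dots,v_l$, and abbreviate $\theta_k:=\alpha_n([v_0,v_k])$, $q_k:=r_n(v_k)/r_n(v_0)$, $p_k:=\tilde r_n(v_k)/\tilde r_n(v_0)$. Since $u_n(v_k)-u_n(v_0)=u_n(v_0)(p_k-q_k)/q_k$ and $\psi_\theta'(q)=f_\theta'(\log q)/q$, the definition~\eqref{defmu1} of $\mu$ gives
\[
 \Delta u_n(v_0)=2\,u_n(v_0)\sum_{k=1}^{l}\psi_{\theta_k}'(q_k)\,(p_k-q_k).
\]
As $u_n(v_0)>0$, it therefore suffices to prove $\sum_k\psi_{\theta_k}'(q_k)(p_k-q_k)\ge 0$; the companion inequality $\widetilde\Delta(1/u_n)(v_0)\ge 0$ then follows immediately by exchanging the roles of ${\mathscr C}_n$ and $\widetilde{\mathscr C}_n$ (which replaces $u_n$ by $1/u_n$, the $q_k$ by the $p_k$, and $\Delta$ by $\widetilde\Delta$).

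Next I would record the two constraints coming from $v_0$ being an interior vertex: applying~\eqref{eqFgen} to ${\mathscr C}_n$ and to $\widetilde{\mathscr C}_n$ yields $\sum_k\psi_{\theta_k}(q_k)=\pi=\sum_k\psi_{\theta_k}(p_k)$. Consequently
\[
 \sum_{k=1}^{l}\psi_{\theta_k}'(q_k)(p_k-q_k)=-\sum_{k=1}^{l}\bigl(\psi_{\theta_k}(p_k)-\psi_{\theta_k}(q_k)-\psi_{\theta_k}'(q_k)(p_k-q_k)\bigr),
\]
so the claim will follow as soon as every summand on the right is $\le 0$, i.e.\ as soon as each $\psi_{\theta_k}$ is concave on an interval containing both $q_k$ and $p_k$.

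The heart of the matter is thus the concavity of $\psi_\theta$ on the range of radius ratios permitted by convexity of the kites. Differentiating twice gives $\psi_\theta''(y)=\bigl(f_\theta''(\log y)-f_\theta'(\log y)\bigr)/y^2$, and using Lemma~\ref{lemPropf}(1) a direct computation yields
\[
 f_\theta''(x)-f_\theta'(x)=-\frac{\sin\theta\,(e^{x}-\cos\theta)}{2(\cosh x-\cos\theta)^{2}},
\]
which is $\le 0$ exactly when $e^{x}\ge\cos\theta$; in particular $\psi_\theta$ is concave on $(\cos\theta,\infty)$, and on all of $(0,\infty)$ when $\theta\ge\pi/2$. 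On the other hand, the kite with intersection angle $\theta$ and radii $\rho_0,\rho_1$ has angle $2f_\theta(\log(\rho_1/\rho_0))$ at the white vertex of radius $\rho_0$, so convexity of that kite forces $f_\theta(\pm\log(\rho_1/\rho_0))<\pi/2$, i.e.\ $\log(\rho_1/\rho_0)\in(\log\cos\theta,-\log\cos\theta)$ for $\theta<\pi/2$ (and no restriction for $\theta\ge\pi/2$). Since all kites of ${\mathscr C}_n$ \emph{and} of $\widetilde{\mathscr C}_n$ are convex by hypothesis, both $q_k$ and $p_k$ lie in $(\cos\theta_k,1/\cos\theta_k)$, hence so does the whole segment joining them, and $\psi_{\theta_k}$ is concave there. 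The tangent-line inequality $\psi_{\theta_k}(p_k)\le\psi_{\theta_k}(q_k)+\psi_{\theta_k}'(q_k)(p_k-q_k)$ then holds for each $k$, and combined with the previous display this gives $\sum_k\psi_{\theta_k}'(q_k)(p_k-q_k)\ge0$, completing the argument.

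I expect the only delicate point to be precisely this last matching: converting the geometric assumption that all kites of both patterns are convex into the analytic inequality $e^{x}\ge\cos\theta$ that makes $\psi_\theta$ concave. Away from the convex regime $\psi_\theta$ genuinely fails to be concave (for $\theta<\pi/2$ and sufficiently small radius ratios $f_\theta''-f_\theta'$ becomes positive), so the convexity hypothesis of Theorem~\ref{theoConv} is used here in an essential way, not merely as a technical convenience.
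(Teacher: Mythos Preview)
Your proof is correct and follows essentially the same route as the paper: both arguments feed in the closing condition~\eqref{eqFgen} for \emph{each} pattern and reduce the subharmonicity to the single inequality $f_\theta''(x)-f_\theta'(x)\le 0$ for $e^{x}\ge\cos\theta$, which is then supplied by the convexity of the kites in both ${\mathscr C}_n$ and $\widetilde{\mathscr C}_n$. The only difference is packaging: the paper writes a second-order Taylor expansion (in the variable $u_n(v)$) with Lagrange remainder and shows the remainder is non-positive, whereas you substitute $\psi_\theta=f_\theta\circ\log$ and invoke the tangent-line inequality for a concave function directly; the intermediate point $\xi_v$ in the paper's remainder satisfies $e^{\xi_v}=\lambda_v q_k+(1-\lambda_v)p_k$, so the paper is implicitly using exactly the same convexity of the segment $[q_k,p_k]\subset[\cos\theta_k,\infty)$ that you make explicit.
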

\begin{proof}
We only prove the first claim as the second case is analogous up to 
interchanging $r_n$ and $\tilde{r}_n$.

Recall that the radius functions $r_n$ and $\tilde{r}_n$ 
satisfy~\eqref{eqFgen} at all interior vertices. 
Considering a Taylor expansion in $u_n(v)$ at $u_n(v_0)$ we obtain
\begin{align*}
 0 &= \Biggl(\sum_{[v,v_0]\in E(G_n)} 
        f_{\alpha_n([v,v_0])}(\log \tilde{r}_n(v)- \log 
\tilde{r}_n(v_0))\Biggr) 
-\pi \\
&=\Biggl(\sum_{[v,v_0]\in E(G_n)} 
        f_{\alpha_n({[v,v_0]})}(\log {r}_n(v)- \log {r}_n(v_0) + \log 
u_n(v)-\log u_n(v_0))\Biggr) -\pi \\
&=\underbrace{\Biggl(\sum_{[v,v_0]\in E(G_n)} 
        f_{\alpha_n({[v,v_0]})}(\log {r}_n(v)- \log {r}_n(v_0))\Biggr) 
-\pi}_{=0 \text{ as }{\mathscr C}_n \text{ circle pattern}} \\
&\qquad +\sum_{[v,v_0]\in E(G_n)} 
        f_{\alpha_n({[v,v_0]})}'(\log {r}_n(v)- \log 
{r}_n(v_0))\frac{1}{u_n(v_0)}(u_n(v)-u_n(v_0)) \\
&\qquad +\frac{1}{2}\sum_{[v,v_0]\in E(G_n)} 
(f_{\alpha_n({[v,v_0]})}''(\xi_v)\frac{1}{t_v^2}- 
f_{\alpha_n({[v,v_0]})}'(\xi_v)\frac{1}{t_v^2}) (u_n(v)-u_n(v_0))^2.
\end{align*}
Here $\xi_v = \log {r}_n(v)- \log {r}_n(v_0) + \log t_v-\log u_n(v_0)$ and
$t_v=\lambda_v u_n(v_0) + (1-\lambda_v) u_n(v)$ for suitable 
$\lambda_v\in(0,1)$. Multiply the above equations by $u_n(v_0)$. Then the claim 
follows from the definition of the Laplacian in~\eqref{eqdeflap} if we can show 
that the last sum is non-positive. 

By Lemma~\ref{lemPropf} we have
\begin{align*}
 f_{\alpha_n({[v,v_0]})}''(\xi_v)- f_{\alpha_n({[v,v_0]})}'(\xi_v) &= 
 -\frac{\sin\alpha_n({[v,v_0]})(\sinh \xi_v +\cosh 
\xi_v-\cos\alpha_n({[v,v_0]}))}{2(\cosh\xi_v -\cos\alpha_n({[v,v_0]}))^2}
\end{align*}
In case $\alpha_n({[v,v_0]})\geq \pi/2$ we deduce that this term is zero or 
negative. In the case $0<\alpha_n({[v,v_0]})< \pi/2$ the convexity of all kites 
implies that $r_n(v)/r_n(v_0)\geq \cos\alpha_n({[v,v_0]})$ and
$\tilde{r}_n(v)/\tilde{r}_n(v_0)\geq \cos\alpha_n({[v,v_0]})$. Also note $\sinh 
\xi_v +\cosh \xi_v =\text{e}^{\xi_v}$. Therefore
\begin{align*}
 \text{e}^{\xi_v} -\cos\alpha_n({[v,v_0]})
&= \lambda_v \frac{r_n(v)}{r_n(v_0)} + 
(1-\lambda_v)\frac{\tilde{r}_n(v)}{\tilde{r}_n(v_0)} -\cos\alpha_n({[v,v_0]}) \\
&\geq \lambda_v\cos\alpha_n({[v,v_0]}) +(1-\lambda_v)\cos\alpha_n({[v,v_0]}) 
-\cos\alpha_n({[v,v_0]}) = 0.
\end{align*}
Thus the term $f_{\alpha_n({[v,v_0]})}''(\xi_v)- 
f_{\alpha_n({[v,v_0]})}'(\xi_v)$ is also non-positive in this case.
This finishes the proof.
  \end{proof}

\begin{remark}
The arguments in the preceding proof constitute an alternative 
proof of the maximum principle Lemma~\ref{lemMaxEuc} for the radius function 
for circle patterns with only convex kites.
\end{remark}

Consider the graph $G_n$ with with weights defined in~\eqref{defmu1} and 
\eqref{defmu2} respectively. We will show and use the fact that these two 
weighted networks are recurrent in the limit $n\to\infty$.

Consider discs $\B(v_0,R)= \B_{c_n(v_0)}(R)= \{z\in\C : |z-c_n(v_0)|\leq R\}
\subset D$ of fixed radius $R>0$ about $c_n(v_0)$ and $\tilde{\B}(v_0,R)=
\{z\in\C : |z-f^\Diamond_n(v_0)|\leq 
R\}\subset \widetilde{D}$ of fixed radius $R>0$ about $f^\Diamond_n(v_0)$. We 
assume that these discs are completely covered by kites in the respective 
patterns. 
Then taking the vertices whose corresponding centers of circles are contained 
in $B(v_0,R)$ we obtain a subgraph of $G_n$. Denote by
$G_n(v_0,R)$ its connected component containing $v_0$. Analogously, we 
define $\widetilde{G_n}(v_0,R)$. Furthermore, denote $\partial G_n(v_0,R)$ 
and $\partial \widetilde{G_n}(v_0,R)$ the boundary of $G_n(v_0,R)$ and 
$\widetilde{G_n}(v_0,R)$ in $G_n$ respectively.

\begin{lemma}\label{lemRec}
 $R_{\text{eff}}(v_0,\partial G_n(v_0,R))\to \infty$ and 
$R_{\text{eff}}(v_0,\partial \widetilde{G_n}(v_0,R))\to \infty$  for 
$n\to\infty$.
\end{lemma}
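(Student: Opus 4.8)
\emph{Proposal.} The plan is to reduce the statement to a lower bound on vertex extremal length and then invoke the $\vel$/resistance machinery of Section~\ref{secVEL}. I treat $\mathscr C_n$ (the argument for $\widetilde{\mathscr C}_n$ is verbatim the same, with $\delta_n$ replaced by $\tilde\delta_n$, which also tends to $0$ by Lemma~\ref{lemdeltatild}). Since $\mathscr C_n$ is an embedded convex q-bounded circle pattern, all kite angles lie in a fixed interval $(c,\pi-c)$, hence the intersection angles satisfy $\alpha_n\ge\alpha_0:=c>0$; consequently $G_n$, together with the sets $B(v)$, $S(v)=B(v)\cap D(v)$ and a disc $I(v)\subset S(v)$ as in Remark~\ref{remdefS} and Lemma~\ref{lemEstDisc}, satisfies conditions~(1)--(3) with constants depending only on $\alpha_0$ (and $N=2$). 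Normalize so that $c_n(v_0)=0$. By the hypotheses of this section the disc $\B(v_0,R)$ is contained in $D$ and covered by kites of $\mathscr C_n$, so all vertices of $G_n(v_0,R)$ and their neighbours are interior vertices of $G_n$; this is what will let us use the interior-vertex estimates.

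The first step is the separation structure. For $t>0$ write $\ci(t)=\ci_0(t)$, $V_{\ci(t)}=\{v:\operatorname{int}S_n(v)\cap\ci(t)\ne\emptyset\}$. If $\gamma$ is a path in $G_n$ from $v_0$ to $\partial G_n(v_0,R)$, then $\bigcup_{e\subset\gamma}K_e$ is connected, contains $c_n(v_0)=0$ and contains the centre of the terminal boundary vertex, which lies at distance $>R-\delta_n$ from $0$; since every kite has diameter $<\delta_n$ and $K_e$ is covered by $S_n$ of its two endpoints, it follows that for every $t\in(\delta_n,R-\delta_n)$ the path $\gamma$ meets $V_{\ci(t)}$. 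Hence $V_{\ci(t)}$ separates $\{v_0\}$ from $\partial G_n(v_0,R)$ for all such $t$, and $V_{\ci(t)}\cap V_{\ci(t')}=\emptyset$ whenever $|t-t'|\ge\delta_n$ (because $\operatorname{diam}S_n(v)<\delta_n$). Likewise, for fixed radii $t_1<\dots<t_{2m}$ in this range the $V_{\ci(t_i)}$ are pairwise disjoint and $V_{\ci(t_{i_2})}$ separates $V_{\ci(t_{i_1})}$ from $V_{\ci(t_{i_3})}$ for $i_1<i_2<i_3$.

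The second step is the count. For $n$ so large that $\delta_n<R/4$, choose radii $\delta_n\le t_1<t_2<\dots<t_{2m_n}<R/2$ with $t_{2k}\ge 2t_{2k-1}$ and consecutive radii differing by more than $\delta_n$; this is possible with $m_n$ of order $\log(R/\delta_n)$, so $m_n\to\infty$. Any path from $v_0$ to $\partial G_n(v_0,R)$ contains a subpath from $V_{\ci(t_1)}$ to $V_{\ci(t_{2m_n})}$, so every $\Gamma_{G_n}(V_{\ci(t_1)},V_{\ci(t_{2m_n})})$-admissible weight is also admissible for paths from $\{v_0\}$ to $\partial G_n(v_0,R)$; hence $\vel(\{v_0\},\partial G_n(v_0,R))\ge\vel(V_{\ci(t_1)},V_{\ci(t_{2m_n})})$. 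Applying Lemma~\ref{lemEstSumVel} to the chain $V_{\ci(t_1)},\dots,V_{\ci(t_{2m_n})}$ and then Lemma~\ref{lemEstVel} to each consecutive pair (for which $t_{2k}\ge 2t_{2k-1}$ and $V_{\ci(t_{2k-1})}\cap V_{\ci(t_{2k})}=\emptyset$) gives
\[
\vel(\{v_0\},\partial G_n(v_0,R))\ \ge\ \sum_{k=1}^{m_n}\vel(V_{\ci(t_{2k-1})},V_{\ci(t_{2k})})\ \ge\ m_n\,C_2\ \longrightarrow\ \infty .
\]
Finally, since the relevant vertices are interior, the estimate $\sum_{e\ni v}\mu(e)\le C_5(\alpha_0)$ for convex circle patterns proved in Section~\ref{secVEL} applies, and the lemma of Section~\ref{secVEL} relating vertex extremal length to effective resistance yields $R_{\text{eff}}(v_0,\partial G_n(v_0,R))\ge\frac{1}{2C_5}\vel(\{v_0\},\partial G_n(v_0,R))\to\infty$. (Alternatively, one can bypass these two lemmas and argue directly, as in the proof of Lemma~\ref{lemrec}: the radial test function $g(v)=\psi(|c_n(v)|)$, with $\psi$ piecewise linear, equal to $1$ on $[0,t_1]$, to $0$ on $[t_{2m_n},\infty)$, and decreasing by $1/m_n$ across each annulus $[t_{2k-1},t_{2k}]$, satisfies $\mu(e)(g(x)-g(y))^2\lesssim \operatorname{Area}(K_e)/(m_n^2t_{2k-1}^2)$ for edges near the $k$-th annulus, and summing over the disjoint embedded kites gives $\mathcal E(g)\lesssim 1/m_n$, hence $R_{\text{eff}}\gtrsim m_n$.) Running the same argument with $\widetilde{\mathscr C}_n,\tilde\mu,\tilde\delta_n$ in place of $\mathscr C_n,\mu,\delta_n$ — legitimate since $\widetilde{\mathscr C}_n$ is again embedded, convex and q-bounded, $\tilde\B(v_0,R)$ is covered by kites, and $\tilde\delta_n\to0$ by Lemma~\ref{lemdeltatild} — gives $R_{\text{eff}}(v_0,\partial\widetilde{G_n}(v_0,R))\to\infty$.

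The main obstacle is not conceptual but is the bookkeeping in the second step: one must (i) verify carefully that each intermediate circle's vertex set genuinely separates $v_0$ from $\partial G_n(v_0,R)$ inside $G_n$, using the connectedness of the swept-out kites together with the uniform smallness $\operatorname{diam}K_e<\delta_n$, and (ii) produce, for all large $n$, the dyadic family of pairwise disjoint separating circles with $m_n\to\infty$ — it is precisely here that the hypotheses $\delta_n\searrow0$ (and $\tilde\delta_n\searrow0$ via Lemma~\ref{lemdeltatild}) are used.
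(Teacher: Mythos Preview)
Your argument is correct; both the $\vel$ route and the parenthetical alternative work. The paper, however, takes the direct route that you sketch in parentheses, and in an even simpler form: it slices the disc $\B(v_0,R)$ into thin linear (not dyadic) annuli of width $\varepsilon=2\delta_n$, uses on each the test function $f(v)=1-\frac{d(c_n(v),c_n(v_0))-r}{\varepsilon}$, and bounds the Dirichlet energy by the kite area via the identity $\mu(e)\ell_n(e)^2=\textsc{Area}(K_e)$ from Remark~\ref{remArea}. This yields $R_{\text{eff}}(\partial G_n(v_0,r),\partial G_n(v_0,r+\varepsilon))\gtrsim 1/r$, and the series law gives $R_{\text{eff}}(v_0,\partial G_n(v_0,R))\gtrsim\sum_{l\lesssim R/\delta_n}\frac{1}{l}\to\infty$. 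The gain is that the proof is entirely self-contained: it uses only Remark~\ref{remArea} and the Dirichlet characterisation~\eqref{eqdefReff}, and avoids invoking Lemmas~\ref{lemEstSumVel}, \ref{lemEstVel} and the $\vel$/$R_{\text{eff}}$ comparison (together with the conductance bound $\sum_{e\ni v}\mu(e)\le C_5$), all of which your main argument needs. Your route, by contrast, has the virtue of making explicit why the result is really a recurrence statement tied to the $\vel$-parabolicity of the limiting pattern, and it would transfer verbatim to any family of patterns satisfying conditions~(1)--(3) with uniformly bounded vertex conductance sums, not just the q-bounded convex circle patterns.
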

\begin{proof}
We will use~\eqref{eqdefReff} and ideas of the proof of Lemma~\ref{lemrec}. We 
will only prove the first case as both cases are analogous.

First we estimate the effective resistance between $\partial G_n(v_0,r)$ 
and $\partial G_n(v_0,r+\varepsilon)$, where $\varepsilon=2\delta_n$,
$r=2\delta_n k$ and 
$2\leq k\leq \lfloor \frac{R}{2\delta_n}\rfloor-3$. Here $\lfloor y\rfloor$ 
denotes the biggest integer smaller than $y$. 
Consider the function given by $f(v)=1$ on 
$V_n(v_0,r)$, by $f(v)=1-\frac{d(c_n(v),c_n(v_0))-r}{\varepsilon}$ 
on $A(r,\varepsilon):= V_n(v_0,r+\varepsilon)\setminus V_n(v_0,r)$ 
and $f(v)=0$ else. Note that $A(r,\varepsilon)\not=\emptyset$ and for all edges 
$[x,y]$ with length $\ell_n(x,y)=  
d(c_n(x),c_n(y))$ we have $|f(x)-f(y)|\leq \frac{1}{\varepsilon} \ell_n(x,y)$.
Denote by $E_A$ the edges with at least one vertex in 
$A(r,\varepsilon)$.
Then by~\eqref{eqdefReff} we obtain
\begin{align*}
\frac{1}{R_{\text{eff}}(G_n(v_0,r),\partial G_n(v_0,r+\varepsilon))}
&\leq \sum_{[x,y]\in E_A} \mu([x,y])(f(x)-f(y))^2 \\
&\leq \sum_{[x,y]\in E_A} \frac{1}{\varepsilon^2} \mu([x,y])\ell_n(x,y)^2.
\end{align*}
Recall from Remark~\ref{remArea} that $\mu([x,y])\ell_n(x,y)^2$ is the area of 
the kite in ${\mathscr K}_n$ corresponding to the edge $[x,y]$. 
Furthermore $\ell_n(x,y)\leq 2\delta_n$. Thus we may estimate the sum by
$\frac{\pi}{\varepsilon^2}((r+\varepsilon+2\delta_n)^2- (r-2\delta_n)^2) 
=\pi (3+6k)$
using the area of the annulus $\{z\in\C: 
r-2\delta_n<|z-c_n(v_0)|<r+\varepsilon+2\delta_n\}$.
Summing up, we get
\[ R_{\text{eff}}(v_0,\partial V_n(v_0,R))\geq \sum_{l=1}^{\lfloor 
\frac{R}{4\delta_n}\rfloor-2}\frac{1}{2(3+6l)\pi}.\]
As $\delta_n\to 0$ for $n\to\infty$ the claim follows.
  \end{proof}

\begin{lemma}
Fix $q_0\in D$ and let $v_0\in V_n$ be a vertex nearest to $q_0$. Then for any 
vertex $v_1$ adjacent to $v_0$ in $G_n$ we have $u_n(v_1)/u_n(v_0)\to 1$ for 
$n\to\infty$. Furthermore, this convergence is uniform on closed discs 
contained in $D$.
\end{lemma}
\begin{proof}
Let $\B(v_0,R)\subset D$ be fixed. Let $h_n$ be the unique function which is 
harmonic with respect to $\Delta$ on ${G_n}(v_0,R)\setminus\{v_0\}$ with 
values $h_n(v_0)=1$ and $h_n(v)=0$ outside 
${G_n}(v_0,R)$. This function minimizes the energy 
in~\eqref{eqdefReff} for $Z=v_0$ and $A=V_n\setminus V_n(v_0,R)$, 
see~\cite{LP}. Let $v_1$ be adjacent to $v_0$. The preceding lemma implies 
that $h_n(v_1)\to h_n(v_0)=1$ for $n\to\infty$ as the weights $\mu([x,y])$ are 
uniformly bounded from above and below. 
An analogous reasoning applies for $\tilde{h}_n$ being the unique function 
which is harmonic with respect to $\widetilde{\Delta}$ on 
$\widetilde{G_n}(v_0,\tilde{R})\setminus\{v_0\}$ with values 
$\tilde{h}_n(v_0)=1$ and $\tilde{h}_n(v)=0$ outside 
$\widetilde{G_n}(v_0,\tilde{R})$. We choose $\tilde{R}=RC$ for 
the constant $C$ of estimate~\eqref{eqestgamma}.

Lemma~\ref{lemEstrn} implies that $u_n$ and $1/u_n$ are bounded on 
${G_n}(v_0,R)$ with bounds independent of $n$, that is 
$|u_n|\leq M$ and $|1/u_n|\leq \widetilde{M}$. Consider 
\[v_n=u_n-(M(1-h_n)+u_n(v_0)h_n)\quad \text{and}\quad 
\tilde{v}_n=1/u_n-(\widetilde{M}(1-\tilde{h}_n)+\tilde{h}_n/u_n(v_0)).\] 
By construction we have $v_n(v_0)=0=\tilde{v}_n(v_0)$. Also 
$v_n|_{\partial_n}\leq 0$ and $\tilde{v}_n|_{\tilde{\partial}_n}\leq 0$ 
restricted to 
the boundary $\partial_n= $ boundary vertices of ${G_n}(v_0,R)$ and 
$\tilde{\partial}_n= $ boundary vertices of $\widetilde{G_n}(v_0,\tilde{R})$ . 
Furthermore $v_n$ is subharmonic with respect to $\Delta$ on 
${G_n}(v_0,R)\setminus\{v_0\}$ and $\tilde{v}_n$ is subharmonic with respect 
to $\widetilde{\Delta}$ on $\widetilde{G_n}(v_0,\tilde{R})\setminus\{v_0\}$. By 
the maximum principle for subharmonic functions on graphs
(see~\cite{Woe00} or~\cite{LP}) we deduce that for neighboring vertices $v_1$ 
of $v_0$ we have
\[ u_n(v_1)\leq u_n(v_0)h_n(v_1) +M(1-h_n(v_1))\quad\text{and}\quad
 \frac{1}{u_n(v_1)}\leq \frac{\tilde{h}_n(v_1)}{u_n(v_0)} 
+\widetilde{M}(1-\tilde{h}_n(v_1)). \]
This implies $u_n(v_1)/u_n(v_0)\to 1$ for $n\to\infty$. Also, the proof of 
Lemma~\ref{lemRec} and the previous reasoning show that in fact this 
convergence is uniform on discs about $v_0$.
  \end{proof}

\begin{corollary}
 The $k$-quasiconformal homeomorphisms $g_K$ are in fact conformal.
\end{corollary}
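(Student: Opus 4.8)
The plan is to read off conformality of $g_K$ from the convergence $u_n(v_1)/u_n(v_0)\to 1$ for neighbouring vertices established just before the corollary. Since conformality is a local property, I would fix a point $q_0\in\text{int}(K)$, pass to the subsequence along which $f_n^\Diamond|_K\to g_K$ locally uniformly, and show that $g_K$ is conformal on a small disc about $q_0$. Recall that on each of the two triangles into which a kite $K_e$, $e=[v_0,v_1]$, is split, $f_n^\Diamond$ is affine-linear with eigenvalues $u_n(v_0)$ and $u_n(v_1)$, and that q-boundedness forces the angle between the two eigenvectors to stay in a fixed compact subinterval $[\phi_0,\pi-\phi_0]\subset(0,\pi)$ independent of $n$ and $e$.

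The first step is a linear-algebra estimate: an affine map with positive eigenvalues $\lambda_1,\lambda_2$ whose eigenvectors meet at an angle in $[\phi_0,\pi-\phi_0]$ has maximal dilatation tending to $1$ as $\lambda_2/\lambda_1\to1$, uniformly in the angle. This follows on writing the linear part as $\lambda_1 I+(\lambda_2-\lambda_1)Q$, where $Q$ is the projection onto the second eigendirection along the first and $\|Q\|$ is bounded in terms of $\phi_0$; dividing by $\lambda_1$ shows the map is close to the identity, hence of dilatation close to $1$. Feeding in the convergence $u_n(v_1)/u_n(v_0)\to 1$, which was shown to be uniform over all edges inside a fixed disc about the vertex of $V_n$ nearest to $q_0$, I would then obtain: for every $\eps>0$ there are $r>0$ and $N$ such that $f_n^\Diamond$ is $(1+\eps)$-quasiconformal on $\B(q_0,r)$ for all $n\ge N$ (the disc being covered by kites for $n$ large by Corollary~\ref{corKtildeK}).

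The final step is to pass to the limit. Since $f_n^\Diamond\to g_K$ uniformly on $\overline{\B(q_0,r)}$ and $g_K$ is a homeomorphism, lower semicontinuity of the maximal dilatation under locally uniform convergence of quasiconformal homeomorphisms (see~\cite{LV73}) gives that $g_K$ is $(1+\eps)$-quasiconformal on $\B(q_0,r)$ for every $\eps>0$, hence $1$-quasiconformal there; a $1$-quasiconformal homeomorphism has vanishing complex dilatation almost everywhere and is therefore conformal. As $q_0\in\text{int}(K)$ was arbitrary, $g_K$ is conformal. I do not expect a real obstacle at this point: the substantive work was already carried out in Lemmas~\ref{lemsubharm} and~\ref{lemRec} and the discussion following them, and what remains is only the promotion of the pointwise bound on $u_n(v_1)/u_n(v_0)$ to a uniform dilatation estimate on a small disc, together with standard compactness and lower-semicontinuity facts for quasiconformal maps.
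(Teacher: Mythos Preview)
Your proposal is correct and follows essentially the same approach as the paper. The paper does not give an explicit proof of this corollary: it announces beforehand that the quasiconformal constant $k(n,K)$ is controlled by the maximum and minimum of $u_n(v_1)/u_n(v_0)$ over edges in $K$, then establishes $u_n(v_1)/u_n(v_0)\to 1$ uniformly on discs via Lemmas~\ref{lemsubharm} and~\ref{lemRec}, and states the corollary without further argument; you have simply spelled out the linear-algebra step and the passage to the limit via lower semicontinuity that the paper leaves implicit.
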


\begin{proof}[Proof of Theorem~\ref{theoConv}.]
We have already proven that for every fixed compact set $K\subset D$ there is a 
subsequence of $\{f_n^\Diamond|_K\}$ which converges uniformly on compact 
subsets of the interior int$(K)$ to some function $g_K$ which is conformal on 
int$(K)$.
Consider an exhaustion of $D$ by compact set $K_j$ such that 
$K_j\subset \text{int}(K_{j+1})$ and $\bigcup_{j}K_j =D$. Using a diagonal 
process we can obtain a subsequence which converges uniformly on all compact 
sets in $D$ to a conformal map $g$ on $D$. For simpler notation we denote this 
subsequence again by $\{f_n^\Diamond\}$.

It remains to be shown that $g(D)=\widetilde{D}$. Indeed let 
$w\in\widetilde{D}$. 
As $d(\widetilde{\mathscr S}_n,\partial\widetilde{D})\leq\delta_n 
\to 0$ we 
deduce from Corollary~\ref{corKtildeK} that $(f_n^\Diamond)^{-1}(w)\in 
\text{int}(K_j)$ for some $j$ and $n$ large enough. 
Estimate~\eqref{eqestgamma} implies that $f_n^\Diamond$ are equicontinuous for 
$n$ large enough. Thus this family of $k$-quasiconformal mappings on $K_j$ is 
also Hoelder continuous with Hoelder exponent $\alpha\in(0,1)$.
Now let $z_n=(f_n^\Diamond)^{-1}(w)\in K_j$. As $K_j$ is compact a subsequence 
$\{z_{n_l}\}$ of $\{z_n\}$ will converge to some $z\in K_j$. Thus we have
\[|f_{n_l}^\Diamond(z)-w|= |f_{n_l}^\Diamond(z)- f_{n_l}^\Diamond(z_{n_l})|
 \leq A|z-z_{n_l}|^\alpha
\]
with some constant $A$ depending only on $k$ and $K_j$.
This implies $f_{n_l}^\Diamond(z)\to w$ and therefore $f_n^\Diamond(z)\to w$ as 
$\{f_n^\Diamond\}$ converges by our previous reasoning. Thus $w\in g(D)$ and 
hence $g$ is surjective as $w$ can be chosen arbitrarily.

This completes the proof of Theorem~\ref{theoConv}.
  \end{proof}

\section*{Acknowledgement}
The author is grateful to Boris Springborn for useful discussions and
advice.

This research was supported by the DFG Collaborative Research Center
TRR~109 ``Discretization in Geometry and Dynamics''.



\bibliographystyle{amsalpha} 
\bibliography{RigidityConvergence}

\normalsize

\appendix
\section{Proof of Lemma~\ref{lemCompRad}}\label{secProofLemma}
\normalsize
In this appendix we prove Lemma~\ref{lemCompRad} by suitably adapting the 
proof of Lemma~6.1 in~\cite{He99} using estimates of Section~\ref{secVELest}.

Let $\mathscr C$ be an infinite circle pattern. Denote
\[\tau(v)=\frac{r(v)}{d(0,B(v))}\in (0,\infty]\quad \text{for } v\in V\qquad
\text{and}\qquad
\tau({\mathscr C})= \limsup_{v_k\to\infty} \tau(v_k)\in[0,\infty]\]
where we have arranged the vertices 
of $V$ into a sequence $(v_k)$. Note that if $\mathscr C$ is locally finite and 
$f:\C\to\C$ is a similarity then $\tau(f({\mathscr C}))= \tau({\mathscr C})$.

Let $v_0\in V$. Without loss 
of generality we may assume that the centers $c(v_0)=0$ and $\tilde{c}(v_0)=0$ 
are 
placed at the origin. Furthermore we may assume that $r(v_0)=\tilde{r}(v_0)$ by 
suitable scaling. Let $v_1\in V$ be another vertex.

\paragraph{Case (i):}  Assume that $\tau({\mathscr C})<\infty$. Then there 
exists a constant $\delta>3$ and a finite subset $V_0\subset V$, containing 
$v_0$ and $v_1$ such that for all $v\in V\setminus V_0$ there holds
\[\frac{r(v)}{d(0,B(v))} =\tau(v)\leq \frac{\delta}{3}.\]
Let $R_0>0$ be large enough such that $B(v)\subset \B(R_0)$ for all $v\in V_0$. 
Then for all $R>R_0$ we have $V_{\ci(R)}\cap V_{\ci(\delta R)} =\emptyset$ as 
$\tau(v)\leq \delta/3$.

Set $R_j=\delta^jR_0$ and $V_j=V_{\ci(R_j)}$ for $j\geq 1$. Then for all $0\leq 
i_1<i_2<i_3$ the set $V_{i_2}$ separates $ V_{i_1}$ and $V_{i_3}$ and 
$\vel(V_2,V_{2k-1})\geq \sum_{j=1}^{k-1} \vel(V_{2j},V_{2j+1})$ by 
Lemma~\ref{lemEstSumVel}. Furthermore, Lemma~\ref{lemEstVel} implies that 
$\vel(V_{2j},V_{2j+1})\geq C_2$ and thus $\vel(V_2,V_{2k-1})\geq (k-1) C_2$.
Choose $k=2+\lceil C_6/C_2 \rceil$, where $\lceil x \rceil$ is the smallest 
integer $\geq x$. Then $\vel(V_2,V_{2k-1})> C_6$. Now we apply 
Lemma~\ref{lemIntRad} to the circle pattern $\widetilde{\mathscr C}$. So there 
is $\tilde{R}>0$ such that for all $\tilde{\rho}\in[\tilde{R},2\tilde{R}]$ the 
set $\tilde{V}_{\ci(\tilde{\rho})}= \{v\in V: \tilde{S}(v)\cap\ci(\tilde{\rho}) 
\not=\emptyset\}$ separates $V_2$ and $V_{2k-1}$. Then 
$\tilde{V}_{\ci(\tilde{\rho})}$ also separates $V_1$, and $V_{2k}$ and 
$\tilde{V}_{\ci(\tilde{\rho})}\cap V_1 \subset V_2\cap V_1 =\emptyset$ as well 
as 
$\tilde{V}_{\ci(\tilde{\rho})}\cap V_{2k} \subset V_{2k-1}\cap V_{2k} 
=\emptyset$. 
This implies that $\tilde{B}(v)\subset \B_{int}(\tilde{\rho})$ for all $v\in 
V_1$ 
and $\tilde{B}(v)\subset \C\setminus\B_{int}(2\tilde{\rho})$ or all $v\in 
V_{2k}$.

Consider the subgraph $G_1$ of $G$ consisting of all vertices $v$ such that 
$B(v)\cap \B(R_1)\not =\emptyset$. Then 
$\tilde{B}(v)\subset\B(\tilde{R})\subset 
\B(2\tilde{R})$. Denote by $r_{hyp}(B)\subset\D$ the hyperbolic radius of 
the 
disc $B$. We deduce from Lemma~\ref{lemMaxHypGen} that
\[r_{hyp}(\textstyle\frac{1}{2\tilde{R}}\tilde{B}(v)) \leq 
r_{hyp}(\frac{1}{{R_1}}{B}(v))\]
holds for all vertices $v$ of $G_1$.

Similarly, consider the subgraph $G_2$ of $G$ consisting of all vertices $v$ 
such that $\tilde{B}(v)\cap \B(2\tilde{R})\not =\emptyset$.  Then 
${B}(v)$ is contained in the interior of $\B({R}_{2k})$ and for all vertices 
$v$ 
of $G_2$ we deduce from Lemma~\ref{lemMaxHypGen} that
\[r_{hyp}(\textstyle \frac{1}{2\tilde{R}}\tilde{B}(v)) \geq 
r_{hyp}(\frac{1}{{R_{2k}}}{B}(v)).\]

As the discs $\frac{1}{2\tilde{R}}\tilde{B}(v),\frac{1}{{R_1}}{B}(v), 
\frac{1}{2\tilde{R}}\tilde{B}(v),\frac{1}{{R_{2k}}}{B}(v)$ are all contained in 
$\frac{1}{2}\D$ the hyperbolic and Euclidean radii are comparable. In 
particular, there exists an absolute constant $\hat{C}_{0}>0$ such that 
\[\frac{1}{2\tilde{R}}\tilde{r}(v_j)\leq 
\hat{C}_{0} \frac{1}{{R_1}}{r}(v_j)\quad \text{and}\quad 
\hat{C}_{0} \frac{1}{2\tilde{R}}\tilde{r}(v_j) \geq \frac{1}{{R_{2k}}}{r}(v_j)= 
\frac{\delta^{-2k+1}}{R_1} r(v_j)\] 
hold for $j=0,1$. As $r(v_0)=\tilde{r}(v_0)$ 
we see that $\frac{1}{2\tilde{R}}\leq \hat{C}_{0} \frac{1}{{R_1}}$ and 
$\hat{C}_{0} \frac{1}{2\tilde{R}}\geq \frac{1}{{R_{2k}}}$, therefore
\[C=\hat{C}_{0}^2\delta^{2k-1} \geq \hat{C}_{0}\frac{2\tilde{R}}{{R_1}} \geq 
\frac{\tilde{r}(v_1)}{r(v_1)} \geq \delta^{-2k+1} 
\frac{2\tilde{R}}{\hat{C}_{0}{R_1}} 
\geq \frac{\delta^{-2k+1}}{\hat{C}_{0}^2}=\frac{1}{C}.\]

\paragraph{Case (ii):} Now assume that $\tau({\mathscr C})=\infty$. We consider 
the infinite set $W=\{v\in V: \tau(v)\geq 1\}$. Define $\tilde{\tau}$ 
analogously 
as $\tau$. We start with the following claim:
\begin{equation}\label{eqhattau}
 \limsup_{v\in W, v\to \infty} \tilde{\tau}(v)>0.
\end{equation}
\begin{proof}[Proof of~\eqref{eqhattau}]
 Suppose the contrary, that is $\limsup_{v\in W, v\to \infty} 
\tilde{\tau}(v)=0$. 
Then for all $0<\eps<1/2$ there is a finite subset $V_0\ni v_0$, containing 
also all neighbors of $v_0$, such that for 
all vertices $v\in W\setminus V_0=:W'$ there holds
\begin{equation}\label{eqcontass}
\frac{\tilde{r}(v)}{d(0,\tilde{B}(v))} =\tilde{\tau}(v)<\eps.
\end{equation}
Denote by $\widehat{G}$ the subgraph of $G$ which contains no vertices of 
$W'$ 
(or edges ending in $W'$). Let $R_0>0$ be large enough such that $B(v)\subset 
\B(R_0)$ for all $v\in V_0$. Set $R_j=4^j R_0$ and $V_j=V_{\ci(R_j)}$ for  
$j\geq 1$. Then $V_i\cap V_j\subset W'$ for all $i\not=j$ and for all $0\leq 
i_1<i_2<i_3$ the set $V_{i_2}$ separates $ V_{i_1}$ and $V_{i_3}$.

Denote by $\vel_{\widehat{G}}(V_i,V_j)$ the vertex extremal length between 
$V_i\cap (V\setminus W')$ and $V_j\cap (V\setminus W')$ in $\widehat{G}$. Set 
$k=2+\lceil 2C_6/C_2\rceil$. Then we obtain as in case~(i)
\[\vel_{\hat{G}}(V_2,V_{2k-1}) \geq \sum_{i=1}^{k-1} 
\vel_{\hat{G}}(V_{2i},V_{2i+1}) \geq (k-1) C_2 > 2C_6.\]

Assume that there exists $\tilde{R}>0$ such that for all 
$\rho\in[\tilde{R},2\tilde{R}]$ the set $\tilde{V}_{\ci(\rho)}$ separates $V_2$ 
from $V_{2k-1}$ in $G$. Then $V_2\cap V_{2k-1} \subset 
\tilde{V}_{\ci(\tilde{R})}\cap 
\tilde{V}_{\ci(2\tilde{R})}\cap W'=\emptyset$ due to~\eqref{eqcontass}. As 
$R_0$ is arbitrary  this implies that $\tau({\mathscr C})\leq 
\frac{4^{2k-3}-1}{2}<\infty$ contradicting our assumption.
It now remains to prove the existence of $\tilde{R}>0$.
\hfill  
\begin{proof}[Existence of $\tilde{R}>0$]
Let $U_{2,2k-1}$ be the subset of vertices which is separated from $V_0$ by 
$V_2$ and from $\infty$ by $V_{2k-1}$, that is $U_{2,2k-1}=\{v\in V: S(v)\cap 
\{z\in\C: R_2<|z|<R_{2k-1}\}\not= \emptyset\}$. Denote $W''=U_{2,2k-1}\cap W$. 
As $k$ is fixed, $\tau(v)\geq 1$ for $v\in W$ and by condition~(2) the 
number of vertices in $W''$ is bounded by a universal constant, say $|W''|\leq 
M$. 

Define $\tilde{R}=\min\{ R: \tilde{B}(v)\cap \B(R)\not=\emptyset \text{ for all 
} v\in V_2\} >0$. Without loss of generality we may assume that $\tilde{R}=1$.
Let $\gamma^*\in\Gamma_G^*(V_2,V_{2k-1})$. We deduce as in the proof of
Lemma~\ref{lemIntRad} that the diameter of $\tilde{\Sp}(\gamma^*)$ is $\geq 
\tilde{R}=1$.

Assume that there exists $\rho_1\in[1,2]$ such that $\tilde{V}_{\ci(\rho_1)}$ 
does not separate $V_2$ and $V_{2k-1}$. Thus there exists a path $\gamma_0$ 
from $V_2$ to $V_{2k-1}$ with $\gamma_0\cap \tilde{V}_{\ci(\rho_1)}=\emptyset$. 
Again as in the proof of Lemma~\ref{lemIntRad} this implies that 
$\tilde{S}(v_0)$ 
is contained in the interior of $\B(\rho_1)$, so $\tilde{\Sp}(\gamma^*)\cap 
\B(\rho_1)\supset \tilde{\Sp}(\gamma^*\cap \gamma_0)\not= \emptyset$.

Define $\eta(v)= \min\{2\tilde{r}(v),6\}$ 
if the area of the lunar region $\textsc{Area}(\tilde{B}(v)\cap 
\B(3))\geq\frac{1}{5}\pi \tilde{r}(v)^2$ analogously as in the proof of 
Lemma~\ref{lemIntRad}. If $v\in V_{\ci(3)}$ and  if 
$\textsc{Area}(\tilde{B}(v)\cap \B(3))<\frac{1}{5}\pi \tilde{r}(v)^2$ 
let $\eta(v)$ be the maximum of the length of the arc $\ci(3)\cap \tilde{S}(v)$ 
and of the maximum of the  distance of a point 
in $\tilde{S}(v)\cap \B(3)$ to the arc $\ci(3)\cap \tilde{S}(v)$ as in 
condition~(3). Else set $\eta(v)=0$. Then for any connected curve $\gamma^*\in 
\Gamma^*_G(V_2,V_{2k-1})$ we have $\sum_{v\in\gamma^*}\eta(v) \geq 1$ as in the 
proof of Lemma~\ref{lemIntRad}. By our assumptions on $V_2,V_{2k-1}$ and $G$ 
every curve $\gamma^*\in \Gamma^*_G(V_2,V_{2k-1})$ contains a connected 
subcurve, so the estimate holds for all such curves $\gamma^*$.

By our assumption we know that $\frac{\tilde{r}(v)}{d(0,\tilde{B}(v))}<\eps$ 
for $v\in W'\supset W''$, so $\eta(v)< 6\eps$. This implies $\sum_{v\in W''} 
\eta(v)\leq 6\eps M$ as $|W''|=M$ is constant.

Let $\beta^*\in \Gamma^*_{\tilde{G}}(V_2\cap V\setminus W', V_{2k-1}\cap 
V\setminus W')$. Then $\gamma^*=\beta^*\cup W''\in \Gamma^*_G(V_2,V_{2k-1})$ 
and $\sum_{v\in\beta^*}\eta(v) \geq 1-6\eps M$. Choose $\eps>0$ small enough 
such that $1-6\eps M\geq 1/\sqrt{2}$. Then $(\sqrt{2}\eta)$ is 
$\Gamma^*_{\tilde{G}}(V_2\cap V\setminus W', V_{2k-1}\cap V\setminus 
W')$-admissible and we obtain
\[\vel_{\tilde{G}}(V_2,V_{2k-1})=\m(\Gamma^*_{\tilde{G}}(V_2\cap V\setminus W', 
V_{2k-1}\cap V\setminus W'))\leq \text{area}(\sqrt{2}\eta)=2\text{area}(\eta).\]
Now $2\text{area}(\eta)\leq 2\cdot \frac{9}{4C_2}=2C_6$ can be bounded from 
above similarly as in the proof of Lemma~\ref{lemIntRad}. This contradicts our 
choice of $k$ and finishes the proof on existence of $\tilde{R}$.
  \end{proof}

Thus we have shown~\eqref{eqhattau}, that is there exists 
$\delta\in(0,\frac{1}{3})$ such that
\[\limsup_{v\in W, v\to \infty} \tilde{\tau}(v)>3\delta>0.\]
Let $u_k$ be a sequence of pairwise disjoint vertices satisfying $\tau(u_k)\geq 
1$ and $\tilde{\tau}(u_k)\geq 3\delta$. As $\mathscr C$ and 
$\widetilde{\mathscr 
C}$ are both locally finite in $\C$ we deduce that $\text{dist}(0,B(u_k))\to 
\infty$ and $\text{dist}(0,\tilde{B}(u_k))\to \infty$.

Let $\rho_0>0$ be large enough such that the discs $B(v_0), B(v_1), 
\tilde{B}(v_0), \tilde{B}(v_1)$ are all contained in $\B(\rho_0)$. Choose 
$u_k=:u$ 
such that
\[d=d(0,B(u))\geq 100\rho_0/\delta^2\quad \text{and}\quad
 \tilde{d}=d(0,\tilde{B}(u))\geq 100\rho_0/\delta^2. \]
See Figure~\ref{figMoebius} for an illustration.
\begin{figure}[tb]
\begin{center}
\resizebox{0.6\textwidth}{!}{\input{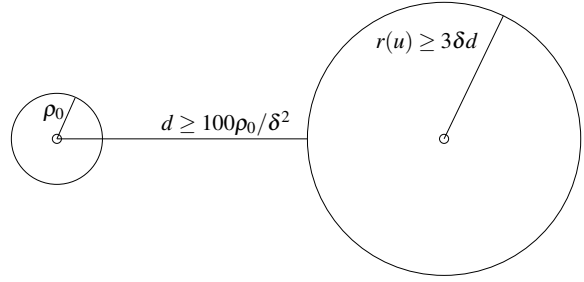}}
\end{center}
 \caption{Illustration of the configuration of $\B(\rho_0)$ and 
$B(u)$.}\label{figMoebius}
\end{figure}
Let $F$ be a M\"obius transformation satisfying $F(0)=0$ and 
$F(\tilde{\C}\setminus B(u))=\D$ and let $\tilde{F}$ be a M\"obius 
transformation 
satisfying $\tilde{F}(0)=0$ and $\tilde{F}(\hat{\C}\setminus \tilde{B}(u))=\D$.
Then the hyperbolic distance between $0$ and $F(\infty)$ is bigger than the 
hyperbolic distance between $0$ and $r(u)/(d+\rho_0+r(u))$. Therefore 
\begin{align*}
|F(\infty)|&= \frac{r(u)}{d+r(u)} =\frac{1}{\frac{1}{\tau(u)} +1} \geq 
\frac{1}{2} > \delta\qquad \text{and} \\
|\tilde{F}(\infty)|&= \frac{1}{\frac{1}{\tilde{\tau}(u)} +1} \geq 
\frac{1}{\frac{1}{3\delta}+1} > \delta.
\end{align*} 
by analogous reasoning. As $\rho_0<\delta^2 d/100$ and 
$\tilde{\rho}_0<\delta^2 \tilde{d}/100$ the two discs $F(\B(\rho))$ and 
$\tilde{F}(\tilde{\B}(\rho))$ lie in $\frac{\delta}{2}\D$. This is even more 
true for the discs $F(B(v_0))$, $F(B(v_1))$, $\tilde{F}(\tilde{B}(v_0))$, 
$\tilde{F}(\tilde{B}(v_1))$. From our assumption $d=d(0,B(u))\geq 
100\rho_0/\delta^2 \geq 
100\rho_0$ we deduce that $|F'(z_1)/F'(z_2)|\leq 2$ holds for all $z_1,z_2\in 
\B(\rho_0)$. An analogous statement holds for $\tilde{F}$. Therefore, the 
ratios of radii are bounded:
\[ \frac{\text{radius}(F(B(v_1)))}{\text{radius}(F(B(v_0)))} \left/ 
\frac{r(v_1)}{r(v_0)} \right.\in[\frac{1}{4},4] \quad \text{and}\quad
\frac{\text{radius}(\tilde{F}(\tilde{B}(v_1)))}{\text{radius}(\tilde{F} 
(\tilde{B}(v_0)))} \left/ 
\frac{\tilde{r}(v_1)}{\tilde{r}(v_0)} \right.\in[\frac{1}{4},4].\]
As $r(v_0)=\tilde{r}(v_0)$ it only remains to show that 
$\frac{\text{radius}(F(B(v_1)))}{\text{radius}(F(B(v_0)))}$ and
$\frac{\text{radius}(\tilde{F}(\tilde{B}(v_1)))}{\text{radius}(\tilde{F} 
(\tilde{B}(v_0)))}$ are comparable. 

First compare $\frac{1}{\delta} F(\mathscr 
C)$ with $\tilde{F}(\widetilde{\mathscr C})$. As $F(\infty)>\delta$ and 
$F(\mathscr 
C)$ is locally finite in $\C\setminus\{F(\infty)\}$ there is only a finite 
number of circles in $\frac{1}{\delta}F(\mathscr C)$ which intersect $\D$. Thus 
Lemma~\ref{lemMaxHypGen} implies $r_{hyp}(\frac{1}{\delta} F(B(v_j))) \geq 
r_{hyp}(\tilde{F}(\tilde{B}(v_j)))$for $j=0,1$.
Analogously, we see $r_{hyp}(F(B(v_j))) \leq r_{hyp}(\frac{1}{\delta} 
\tilde{F}(\tilde{B}(v_j)))$ for $j=0,1$.
As in the proof of the first case~(i) the claim now follows.
  \end{proof}

\end{document}